\setlist[enumerate]{leftmargin=*,widest=0}
\setlist[description]{leftmargin=*,widest=0}
\colorlet{darkblue}{blue!50!black}
\colorlet{darkblue}{red!100!black}
\newtheorem{theorem}{Theorem}[section]
\newtheorem{lemma}[theorem]{Lemma}
\newtheorem{corollary}[theorem]{Corollary}
\newtheorem{definition}[theorem]{Definition}
\newtheorem{remark}[theorem]{Remark}
\newtheorem{claim}{Claim}
\let\originalleft\left
\let\originalright\right
\renewcommand{\left}{\mathopen{}\mathclose\bgroup\originalleft}
\renewcommand{\right}{\aftergroup\egroup\originalright}
\theoremstyle{definition}
\def\1{\mathcal{O}}
\def\t{t\wedge\tau_N^n}
\def\tt{T\wedge\tau_N^n}
\def\d{\mathrm{d}}
\def\I{\mathrm{I}}
\def\D{\mathrm{D}}
\def\A{\mathrm{A}}
\def\W{\mathrm{W}}
\def\R{\mathbb{R}}
\def\E{\mathbb{E}}
\def\Q{\mathrm{Q}}
\def\e{\varepsilon}
\def\L{\mathrm{L}}
\def\HH{\mathrm{H}}
\def\uu{\boldsymbol{u}}  
\def\vv{\boldsymbol{v}}
\def\w{\boldsymbol{w}}
\def\f{\boldsymbol{f}}
\def\C{\mathrm{C}}
\def\P{\mathbb{P}}
\def\N{\mathbb{N}}
\def\x{\boldsymbol{x}}
\DeclareMathOperator*{\esssup}{ess\,sup}
\def\bfb{\mathbf{b}}
\def\bfB{\mathbf{B}}
\def\bfC{\mathbf{C}}
\def\bfX{\mathbf{X}}
\def\bfF{\mathbf{F}}
\def\bfG{\mathbf{G}}
\def\bfH{\mathbf{H}}
\def\bfL{\mathbf{L}}
\def\bfU{\mathbf{U}}
\def\bfS{\mathbf{S}}
\def\bfV{\mathbf{V}}
\def\bfI{\mathbf{I}}
\def\bfD{\mathbf{D}}
\def\bfG{\mathbf{G}}
\def\bfZ{\mathbf{Z}}
\def\bfe{\mathbf{e}}
\def\bfg{\mathbf{g}}
\def\bfa{\mathbf{a}}
\def\bfA{\mathbf{A}}
\def\bfX{\mathbf{X}}
\def\bfb{\mathbf{b}}
\def\bfc{\mathbf{c}}
\def\bfh{\mathbf{h}}
\def\rw{\rm{w}}
\def\bphi{\bm{\phi}}
\def\bfi{\boldsymbol{\varphi}}
\def\bpsi{\boldsymbol{\psi}}
\def\bxi{\boldsymbol{\xi}}
\def\bVcal{\boldsymbol{\mathcal{V}}}
\def\calS{\mathcal{S}}
\def\calA{\mathcal{A}}
\def\Mcal{\mathcal{M}}
\def\bbeta{\boldsymbol{\beta}}
\def\bz{\boldsymbol{z}}
\def\bD{\mathscr{D}}
\def\cD{\mathcal{D}}
\def\cB{\mathcal{B}}
\def\Var{\mathrm{Var}}
\DeclareMathAlphabet\mathbfcal{OMS}{cmsy}{b}{n}
\def\bcH{\mathbfcal{H}}
\newcommand{\Addresses}{{
		\footnote{
			\noindent \textsuperscript{1,3}Department of Mathematics, Indian Institute of Technology Roorkee-IIT Roorkee,
			Haridwar Highway, Roorkee, Uttarakhand 247667, INDIA.
			\\
			\textsuperscript{2}FCT - Universidade de Algarve, Faro, Portugal \& CMAFcIO - Universidade de Lisboa, Lisbon, Portugal. \par\nopagebreak
			\noindent  \textit{e-mail:} \texttt{Manil T. Mohan: maniltmohan@ma.iitr.ac.in, maniltmohan@gmail.com.}
			
			\textit{e-mail:} \texttt{Ankit Kumar: akumar14@mt.iitr.ac.in,ankitkumar.2608@gmail.com.}
			
			\textit{e-mail:} \texttt{Hermenegildo Borges de Oliveira: holivei@ualg.pt.}
			
			\noindent \textsuperscript{*}Corresponding author.
			
			\textit{Keywords:} Stochastic generalized Navier-Stokes-Voigt equations, Gaussian noise, martingale solution, strong solution.
			
			Mathematics Subject Classification (2020): Primary 60H15, 35R60; Secondary 35Q35, 76D03, 76A05.
}}}
\begin{document}

	\title[Well-posedness of weak solutions for stochastic NSV equations]{Existence and uniqueness of weak solutions for the generalized stochastic Navier-Stokes-Voigt equations\Addresses}
	
	\author[A. Kumar, H. B. de Oliveira and M. T. Mohan]
	{Ankit Kumar\textsuperscript{1}, Hermenegildo Borges de Oliveira\textsuperscript{2} and Manil T. Mohan\textsuperscript{3*}}

	\maketitle
	
	\begin{abstract}
		In this work, we consider  the incompressible generalized Navier-Stokes-Voigt equations in a bounded domain $\1\subset\R^d$, $d\geq 2$, driven by a multiplicative Gaussian noise.
The considered momentum equation is given by:
		\begin{align*}
				\mathrm{d}\left(\boldsymbol{u} - \kappa \Delta \boldsymbol{u}\right) = \left[\f +\operatorname{div} \left(-\pi\mathbf{I}+\nu|\mathbf{D}(\boldsymbol{u})|^{p-2}\mathbf{D}(\boldsymbol{u})-\boldsymbol{u}\otimes \boldsymbol{u}\right)\right]\mathrm{d} t + \Phi(\boldsymbol{u})\mathrm{d} \W(t).
		\end{align*}
In the case of $d=2,3$, $\boldsymbol{u}$ accounts for the velocity field, $\pi$ is the pressure, $\f$ is a body force and the final term stay for the stochastic forces.
Here, $\kappa$ and $\nu$ are given positive constants that account for the kinematic viscosity and relaxation time, and the power-law index $p$ is another constant (assumed $p>1$) that characterizes the flow. We use the usual notation $\mathbf{I}$ for the unit tensor and $\mathbf{D}(\boldsymbol{u}):=\frac{1}{2}\left(\nabla \boldsymbol{u} + (\nabla \boldsymbol{u})^{\top}\right)$ for the symmetric part of velocity gradient.
For $p\in\big(\frac{2d}{d+2},\infty\big)$, we first prove the existence of a \emph{martingale solution}.
Then we show the \emph{pathwise uniqueness of solutions}.
We employ the classical \emph{Yamada-Watanabe theorem} to ensure the existence of a unique \emph{probabilistic strong solution}.
	\end{abstract}
	\section{Introduction}\label{sec1}\setcounter{equation}{0}

	\subsection{The model}
	We study flows of incompressible fluids with elastic properties  governed by the incompressible generalized Navier-Stokes-Voigt (NSV) equations in a bounded domain
		$\1\subset\mathbb{R}^d$, $d\geq 2$, with its boundary denoted by $\partial\1$, during the time interval $[0, T]$, with $0<T<\infty$.
		The momentum equation is assumed here to be perturbed by a stochastic term, that is, infinite-dimensional multiplicative Gaussian noise which is the product of  $\Phi(\boldsymbol{u})$ and a Wiener process $\W(\cdot)$.
	The problem under consideration is posed by the following system of equations:
	\begin{alignat}{2}
		\label{1.1}
		\d\left(\uu - \kappa \Delta \uu\right) &= \left[\f + \operatorname{div} \left(-\pi\bfI+\nu|\bfD(\uu)|^{p-2}\bfD(\uu)-\uu\otimes \uu\right)\right]\d t + \Phi(\uu)\d \W(t), \text{ in }\1_T,  \\
		\label{1.2}
		\operatorname{div}\uu &=0, \text{ in }\1_T,  \\
		\label{1.3}
		\uu &=\uu_0, \text{ in }\1\times\{0\},  \\
		\label{1.4}
		\uu &=\boldsymbol{0}, \text{ on }\Gamma_T,
	\end{alignat}
	where $\1_T:=\1\times(0,T)$ and $\Gamma_T:=\partial\1\times[0,T]$.	Here, $\uu=(u_1,\dots,u_d)$ and $\f=(f_1,\dots,f_d)$ are vector-valued functions, $\pi $ is a scalar-valued function, whereas $\nu$ and $\kappa$  are positive constants.	In real-world applications, space dimensions of interest are $d=2$ and $d=3$, and in that case $\uu$ accounts for the velocity field, $\pi $ is the pressure,	$\f$ represents an external forcing field. Greek letters $\nu$ and $\kappa$ stay for positive constants that, in the dimensions of physical interest, account for the kinematics viscosity and for the relaxation time, that is,  the time required for a viscoelastic fluid relax from a deformed state back to its equilibrium configuration.
The power-law index $p$ is a constant that characterizes the flow and is assumed to be such that $1<p<\infty$.
	 In particular, for $1<p<2,$ the model describes shear-thinning fluids, when $p=2,$ we recover the model that governs Newtonian fluids, while for $p>2,$ the model describes shear-thickening fluids.
	The capital letters stay for tensor-valued functions, in particular $\bfI$ is the unit tensor and $\bfD(\uu)$ is the symmetric part of the velocity gradient, that is, $\bfD(\uu):=\frac{1}{2}\left(\nabla \uu + (\nabla \boldsymbol{u})^{\top}\right)$.
	The idea behind the presence of the noise $\Phi(\uu)$ is an interaction between the solution $\uu$ and the random perturbation caused by the Wiener process $\W(\cdot)$, and the product of these two terms represents the stochastic part in the momentum equation (\ref{1.1}).	This stochastic part can be interpreted in different ways. It can be understood as a turbulence in the fluid motion or it can be interpreted as a perturbation from the physical model. On the other hand, apart from the forcing term $\f,$ there might be further quantities with influence on the motion which can be characterized by this stochastic part as well.
	
	\subsection{Literature review}We start with the physical significance of  NSV equations, also named Kelvin-Voigt (KV) equations, as we have considered the generalized stochastic NSV equations \eqref{1.1}-\eqref{1.4}. KV equations are used in the applications to model the response of materials that exhibit all intermediate range of properties between an elastic solid and a viscous fluid, as, for instance, polymers. These materials have some memory in the sense that they can come back to some previous state when the shear stress is removed (see~\cite{Barnes:2000}). The deterministic KV equations were extensively studied in the article \cite{APO}, whose  author coined the name Kelvin-Voigt for the incompressible Navier-Stokes equations perturbed by the relaxation term $\kappa \partial_t\Delta \uu$ (see \cite{MTM-EECT} also). However, and as observed in the work \cite{VGZMVT}, neither Kelvin nor Voigt have suggested any stress-strain relation, or system of governing equations for viscoelastic fluids (see \cite{DDJ}).  Currently, the Navier-Stokes-Voigt name  for the associated system of equations seems to be the most accepted by the researchers in this field. Mathematically speaking, and was noticed by Ladyzhenskaya (see \cite{OAL2}), the most important property of NSV equations is that the term $\kappa\partial_t\Delta\uu$ works as a regularization of the 3D incompressible Navier-Stokes (NS) equations ensuring the unique global solvability of the corresponding problem. The existence and uniqueness of the weak and strong solutions of NSV equations have been established in the work \cite{APO} (cf. \cite{OAL3}). A local in time existence and uniqueness result of an inverse problem for NSV equations is established in \cite{PKKKMT}. Several other authors  have also studied some variants of the NSV equations,  in many settings and under different assumptions, with respect to the well-posedness and long time behavior. It is worth to mention that the works \cite{AAKAPORS,APO2} established a connection between NSV and the Oldroyd models, and proved the existence of classical global solutions for the models. Also, in the works \cite{VOPZVZ,VGZMVT,VGVO}, the authors proved the existence and uniqueness of weak solutions in case of domain varying with time and with the optimal control. The authors in \cite{SNAHBGKK2,SNAHBGKK3,SNAHBGKK4,SNAKK1,SNAKK2}  discussed the existence, uniqueness and some qualitative properties of the solutions to some variants of {NSV} equations, more specifically the case with power-laws and  anisotropic diffusion and several other properties. In \cite{SNAHBGKK1}, the authors considered the {generalized} {NSV} equations for non-homogeneous and incompressible fluid flows with the diffusion and relaxation terms described by two distinct power-laws and proved the existence of weak solutions as well as large-time behavior of the solutions.   The works \cite{YCMELEST,VKKEST,BLFREST,FRCST} established a clear connection between {NSV} equations and the turbulence modeling, to be more specific, with Bardina turbulence models. The same type of connection was firstly studied in the work \cite{WLRL} and followed by the work \cite{RLCLB}. The authors in \cite{CACLBRLBBN} connects {NSV} equations with the Prandtl-Smagorinsky and turbulent kinematic energy models in order to compute the turbulent viscosity. Several questions on {NSV} equations as a regularization of both NS and Euler equations have been tackled in  \cite{CLBLB,CLBSS,VKKEST}. Also, the author in the work \cite{HBG} considered {generalized} NS equations (NS equations with power-law) with the presence of damping term in the momentum equation which governs isothermal flows of incompressible and homogeneous  non-Newtonian fluids and established the existence of weak solutions using regularization techniques, the monotone theory, compactness arguments and truncation method.  In a recent work \cite{SNAHBGKK5}, the authors considered the classical {NSV} problem for incompressible fluids with unknown non-constant density and established the existence of weak solutions (velocity and density), unique pressure  and some regularity results of the solutions on the smoothness of the given data.
	
	We are interested in the {generalized} stochastic NSV equations \eqref{1.1}-\eqref{1.4}.
{As mentioned above, from} several physical point of view it is important to have a stochastic part in the equation of motion:
	\begin{enumerate}
		\item It gives a clear understanding of turbulence in the fluid motion (see \cite{RMBR}).
		\item It could be viewed as a deviation from the physical model.
		\item We are seeing that there may be additional values, generally minor ones, that have an impact on the motion in addition to the force $\f$.
	\end{enumerate}
Several works are available in the literature dealing with the existence of weak solutions of  stochastic NS equations starting from the article \cite{ABRT}. One can see the work \cite{FF} for an overview. However, only a few works are available {dealing with the generalized ($p\not=2$) NS problem}. In the article \cite{JCZMC}, the authors discussed  the bipolar shear thinning fluid. In that work, the presence of additional bi-Laplacian term $\Delta^2\uu$ makes the computations easy. Since,  the additional term gives enough initial regularity which leads to the proof using monotone operators without any truncation arguments.

In \cite{YTNY,NY}, the authors considered the stochastic {power-law} fluids and established the existence and uniqueness of weak solutions and strong convergence of the Galerkin approximation and energy equality, respectively. There were no interaction between the solution and Brownian motion in \cite{YTNY,NY} (additive noise). Further, in \cite{Breit}, the author extended the existence theory of the stochastic {power-law} fluids and improved the results of \cite{YTNY,NY}. He proved the existence of weak solutions (in both analytic and probabilistic sense), using the standard procedure as the pressure term disappear in the weak formulation and later it reconstructed into the formulation. He borrowed the idea from \cite{JW} (for deterministic setting), which relates each term in the equation a pressure part. Moreover, he stabilized the auxiliary equations by adding a large power of $\uu$ and the approach was based on Galekin method. Later, he extended the deterministic approach of the work \cite{JW} to the stochastic case and combined the techniques from nonlinear partial differential equations with stochastic calculus for martingales. Since the author cannot   apply test functions in this situation, he instead employed It\^o's formula to obtain energy estimates  and other stochastic tools.  Finally, in order to justify the limit in the nonlinear terms, he used the monotone operator theory together with the {$\L^\infty$--truncation}.

Let us mention some works available for stochastic NSV equations.  In \cite{HLCS}, the authors considered 3D stochastic NSV equations in bounded domains perturbed by a finite-dimensional Brownian motion and established an LDP for the solution  using weak convergence approach. The existence and long-time behavior of the solutions (more specifically mean square exponentially stability and the almost sure exponential stablility of the stationary solutions) to 3D stochastic NSV equations in bounded domains perturbed by infinite-dimensional Wiener process obtained in \cite{CTHNVT}. In a recent work \cite{QZ}, the author studied the stability of pullback random attractors for 3D stochastic NSV equations with delays.

	\subsection{Objectives and novelties of the paper}The main aim of this work is to prove the existence of martingale solution and unique probabilistic strong solution  of {generalized} stochastic NSV equations perturbed by infinite-dimensional Wiener process. {There seems to be a very scarce knowledge about the model presented in this work, and to the authors best knowledge the analysis presented here has not yet been investigated in the available literature}.
	\begin{itemize}
		\item As we have mentioned above, there are only a few works available in the literature dealing with stochastic {power-law} {fluids~\cite{Breit,YTNY,NY}. In the first work \cite{Breit}, the author proved} the existence theory for stochastic {power-law} fluids driven by an infinite-dimensional multiplicative Gaussian noise. The latter two {works~\cite{YTNY,NY} discussed the existence and uniqueness of weak solutions for stochastic {power-law} fluids driven by a} finite-dimensional Brownian motion.
		\item In this work, we are following the stochastic approach {considered in the work \cite{Breit}, which in turn was an extension to the stochastic case of the deterministic problem studied in the work \cite{JW}}. We are also considering the perturbation by an infinite-dimensional multiplicative Gaussian noise.
		
	\end{itemize}

{
In addition to the stochastic situation, the main conclusion of this work is that the Voigt term $\kappa\partial_t\Delta\boldsymbol{u}$ regularizes the power-law model of the NS equations in such a way that the existence result can be proven up to the smallest possible value (in the sense that the Gelfand triple can still be used) of the power-law exponent $p$ ($p>\frac{2d}{d+2}$), (see Theorem~\ref{thm:exist} below).
The NS power-law model, which corresponds to considering $\kappa=0$ in the momentum equation \eqref{1.1}, was considered in \cite{Breit}.
But there the author used the $\L^\infty$--truncation method, and obtained the existence result for values of $p>\frac{2(d+1)}{d+2}$.
}

\subsection{Organization}
	This article is organized as follows: We start with the basic formulation of the stochastic background, as well as  Definitions \ref{def.1} and \ref{def1.2} of martingale solution and probabilistically strong solution, respectively, and then we state our main Theorem \ref{thm:exist} in Section \ref{sec2}.
    In Section \ref{sec4}, we discuss the decomposition of pressure terms.
    {As in the weak formulation of the problem, the pressure term disappears (see Definition \ref{def.1}), later it can be recovered by using a version of de Rham's theorem.   The idea of decomposing the pressure term that we use in the sequel is borrowed from the work~\cite{Breit}, but its first application dates back to \cite{JW}}.   In Section \ref{sec5}, we consider an approximate system with a stabilization term (a large power of $\uu$), and, by using Galerkin approximations, we obtain the uniform energy estimate, followed by the existence of martingale solution for this approximate system, and for which we still have to use the Skorokhod representation theorem.  Later, we  reconstruct  the pressure term and using monotone operator theory leads to existence of martingale solution to the system \eqref{1.1}-\eqref{1.4}. We wind up the article by  establishing the pathwise uniqueness of the solution to the system \eqref{1.1}-\eqref{1.4} and hence the existence of a unique probabilistic strong solution by an application of  the classical Yamada-Watanabe theorem.
	
	
	\section{Auxiliary results}\label{sec2}\setcounter{equation}{0}
		\subsection{Function spaces}
	Let us  introduce the functions spaces used to characterize the solutions of Fluid Dynamics problems.	Let $\1$ be a domain in $\R^d$, that is,  an open and connected subset of $\R^d$.	By ${\C_0^{\infty}(\1)^d},$ we  denote the space of all infinitely differentiable $\R^d-$valued functions with compact support in $\1$.	For $1\leq p\leq \infty$, we denote by ${\L^p(\1)^d},$ the Lebesgue space consisting of all $\R^d-$valued measurable (equivalence classes of) functions that are $p-$summable over $\1$.	 {By $|\1|,$ we denote the $d$--Lebesgue measure of $\1$.} The corresponding Sobolev spaces are represented by ${\W^{k,p}(\1)^d}$, where $k\in\N$.	When $p=2$, ${\W^{k,2}(\1)^d}$ are Hilbert spaces that we denote by ${\HH^k(\1)^d}$.	We define
	\begin{align*}
		\bVcal &:=\big\{\uu\in{\C_0^{\infty}(\1)^d}:\nabla\cdot \uu=0\big\},  \\
		\bfH &:= \text{ the closure of } \bVcal \text{ in the Lebesgue space } {\L^2(\1)^d}, \\
			\L_\sigma^p(\1)^d&:= \text{ the closure of }  \bVcal \text{ in the Lebesgue space } {\L^p(\1)^d}, \\
		\W_{\sigma}^{p,k}(\1)^d&:= \text{ the closure of }  \bVcal \text{ in the Sobolev space } {\W^{k,p}(\1)^d},
	\end{align*}
	for $1\leq p<\infty$ and $k\in\N$.
	In the particular case of $p=2$, we denote the space $	\W_{\sigma}^{p,k}(\1)^d$ by $\bfV^k$, and if $k=1$, we denote it by $\bfV_p$.
	If both $p=2$ and $k=1$, we denote $	\W_{\sigma}^{p,k}(\1)^d$ solely by $\bfV$.
	Let $(\cdot,\cdot)$ stand for the inner product of the Hilbert space {$\L^2(\1)^d$}, and we denote by $\langle \cdot,\cdot\rangle ,$ the induced duality product between the space ${\W_0^{1,p}(\1)^d}$  and its dual ${\W^{-1,p'}(\1)^d}$, as well as between ${\L^p(\1)^d}$ and its dual ${\L^{p'}(\1)^d}$, where $\frac{1}{p}+\frac{1}{p'}=1$.
{In the sequel, the $\L^p$, $\W^{k,p}$ and $\W^{-k,p'}$ norms will be denoted in short by $\|\cdot\|_p$, $\|\cdot\|_{k,p}$ and $\|\cdot\|_{-k,p'}$.
On $\bfV$, we consider the equivalent norm $\|\uu\|_{\bfV}:=\|\nabla\uu\|_{2}$, $\uu\in\bfV$.
}
	
	\subsection{Stochastic setting}\label{SS}
	In this subsection, we start by recalling some notions of stochastic processes necessary to study stochastic partial differential equations.	We consider the scalar-valued case, being the vectorial case a straightforward {generalization}. Let $(\Omega,\mathscr{F},\P)$ be a probability space, where $\Omega$ denotes the set of all possible outcomes $\omega$, $\mathscr{F}$ is the set of events and $\P:\mathscr{F}\longrightarrow[0,1]$ accounts for the probability measure function.	We assume that $(\Omega,\mathscr{F},\P)$  is equipped with a filtration $\{\mathscr{F}_t\}_{t\in [0,T]}$, that is, with a nondecreasing family of sub-sigma fields of $\mathscr{F}$ ($\mathscr{F}_s\subset\mathscr{F}_t$ whenever  $s< t$), such that $\mathscr{F}_0$ contains all the null elements (elements $A\in\mathscr{F}$ with $\P(A)=0$) and $\{\mathscr{F}_t\}_{t\in [0,T]}$ is right-continuous ($\mathscr{F}_t=\mathscr{F}_{t+}=\cap_{s>t}\mathscr{F}_s$ for all $t\in [0,T]$).
	In this case, we denote a filtered probability space by $(\Omega,\mathscr{F},\{\mathscr{F}_t\}_{t\in [0,T]},\P)$.
	Letting $1\leq p\leq\infty$ and given a Banach space $\bfB$, we denote by $\L^p(\Omega,\mathscr{F},\P;\bfB),$ the space of all measurable mappings $v:(\Omega,\mathscr{F})\longrightarrow (\bfB,\mathscr{B}(\bfB))$, where $\mathscr{B}(\bfB)$ denotes the Borel $\sigma-$algebra on $\bfB$, such that
	$$\E\big[\|v\|_\bfB^p\big]<\infty,$$
	where the expectation $\E$ is taken with respect to $(\Omega,\mathscr{F},\P)$.
	A $\bfB-$valued stochastic process can be interpreted as a mapping $X:[0,T]\times\Omega\longrightarrow (\bfB,\mathscr{B}(\bfB))$, denoted by $X(t,\omega)$.
	For a fixed $\omega\in\Omega$, the mapping $t\mapsto X(t,\omega)$ is called the \emph{path or trajectory} of $X$.
	We say that a $\bfB-$valued stochastic process $X$ is \emph{adapted to a filtration} $\{\mathscr{F}_t\}_{t\in [0,T]}$, or just $\mathscr{F}_t-$adapted, if
	the mapping $X(t):\Omega\longrightarrow (\bfB,\mathscr{B}(\bfB))$, defined by $\omega\mapsto X(t,\omega)$, is measurable for all $t\in[0,T]$.
	The $\bfB-$valued stochastic process $X$ is \emph{progressively measurable} on the probability space $(\Omega,\mathscr{F},\P)$, with respect to the filtration $\{\mathscr{F}_t\}_{t\in [0,T]}$, that is, is $\{\mathscr{F}_t\}_{t\in [0,T]}-$measurable, if the mapping
	$(s,\omega)\longmapsto X(s,\omega)$
	is measurable on $\mathscr{B}([0,t])\otimes\mathscr{F}_t$ for all $t\in[0,T]$. 
	Note that every progressively measurable stochastic process is $\mathscr{F}_t-$adapted.

	We are now ready to define the notion of integral of a stochastic process with values in specific Banach spaces. 	To this end, we first consider a bounded linear operator $\Q\in{\mathcal{L}(\bfU,\bfU)}$, where {$\bfU$} is a separable Hilbert space with inner product denoted by $(\cdot,\cdot)_{\bfU}$ and associated norm by $\|\cdot\|_\bfU$.
	If  $\Q$ is supposed to be nonnegative, symmetric and with finite trace, then there exists an orthonormal basis $\{\bfe_k\}_{k\in\N}$ of  {$\bfU$} such that
	$$ \Q \bfe_k=\lambda_k\bfe_k,\qquad \lambda_k\geq 0,\quad  \forall\ k\in\N$$
	and so that $0$ is the only accumulation point of the sequence $\{\lambda_k\}_{k\in\N}$.
	For a fixed  {$\Q\in\mathcal{L}(\bfU,\bfU)$} nonnegative, symmetric and with finite trace, a $ {\bfU}-$valued stochastic process $\W$ on the probability space $(\Omega,\mathscr{F},\P)$ is a standard $\Q-$Wiener process if and only if
	$$\W(t)=\sum_{k\in\N}\sqrt{\lambda_k}\beta_k(t)\bfe_k,\qquad t\in[0,T],$$
	where $\beta_k$, with $k\in\{n\in\N:\lambda_n>0\}$, are independent real-valued Brownian motions on $(\Omega,\mathscr{F},\P)$.
	A $\Q-$Wiener process $\W$ is said to be adapted to a filtration $\{\mathscr{F}_t\}_{t\in [0,T]}$, if $\W(t)\in \mathscr{F}_t$ for every $t\in[0,T]$ and $\W(t)-\W(s)$ is independent of $\mathscr{F}_s,$ for all $0\leq s\leq t\leq T$.
	Let us now set  {$\bfU_0=\Q^{\frac{1}{2}}\bfU$}, where  $\Q^{\frac{1}{2}}$ is the operator defined by
	$$ \Q^{\frac{1}{2}}\bfe_k=\sqrt{\lambda_k}\bfe_k.$$
	Then  {$\bfU_0$} is a Hilbert space with inner product
	$$(\uu,\boldsymbol{v})_0=\big( \Q^{-\frac{1}{2}}\uu, \Q^{-\frac{1}{2}}\boldsymbol{v}),\qquad \uu,\boldsymbol{v}\in  {\bfU_0},$$
	and associated norm denoted by $\|\cdot\|_0$, where $ \Q^{-\frac{1}{2}}$ is the pseudo-inverse operator of $ \Q^{\frac{1}{2}}$.
	If $\{\bfe_k\}_{k\in\N}$ is an orthonormal basis of ${\bfU}_0$ and $\{\beta_k\}_{k\in\N}$ is a family of independent real-valued Brownian motions on $(\Omega,\mathscr{F},\P)$, then the series $\sum\limits_{k\in\N}\sqrt{\lambda_k}\beta_k(t)\bfe_k$ is convergent in $\L^2(\Omega,\mathscr{F},\P;{\bfU})$, because
	the inclusion ${\bfU}_0\subset {\bfU}$ defines a Hilbert-Schmidt embedding from ${\bfU}_0$ onto ${\bfU}$.
	Recall that by $\mathcal{L}_2({\bfU}_0,{\bfU})$, we denote the set of all linear operators {$\Q:\bfU_0\longrightarrow \bfU$} such that for every orthonormal basis $\{\bfe_k^0\}_{k\in\N}$ of ${\bfU}_0,$ one has $\sum\limits_{k\in\N} \|\Q \bfe_k^0\|_{\bfU}^2<\infty.$
	In the case that $\Q$ is no longer of finite trace, one looses this convergence, but it is possible to define the Wiener process.
	For that, we consider a further Hilbert space ${\bfU}_1$ and a Hilbert-Schmidt embedding $J:{\bfU}_0\longrightarrow {\bfU}_1$.
	Let us set $\Q_1=JJ^\ast$, where
	$J^\ast:{\bfU}_1\longrightarrow {\bfU}_0$ is the adjoint operator of $J$.
	Then $\Q_1\in \mathcal{L}({\bfU}_1,{\bfU}_1)$, $\Q_1$ is nonnegative definite and symmetric with finite trace and the series
	$\sum\limits_{k\in\N}\beta_k(t)J\bfe_k$
	converges in $\L^2(\Omega,\mathscr{F},\P;{\bfU}_1)$ and defines a $\Q_1-$Wiener process on ${\bfU}_1$,
	\begin{align}\label{2.1}
		\W(t) =\sum_{k\in\N}\beta_k(t)J\bfe_k,\qquad  t\in[0, T].
	\end{align}
	Moreover, we have that $\Q^{\frac{1}{2}}_1({\bfU}_1)=J({\bfU}_0)$ and $J:{\bfU}_0\longrightarrow \Q^{\frac{1}{2}}_1({\bfU}_1)$ is an isometry.
	Since the Hilbert space  ${\bfU}_1$ and the Hilbert-Schmidt embedding $J$ considered above always exist, (\ref{2.1}) defines a Wiener process, even {though} $\Q$ is not of finite trace. The stochastic process defined this way is usually referred to as a \emph{cylindrical Wiener process} in ${\bfU}$.
	In this case, we say that a process $\phi(t)$, $t\in[0,T]$, is integrable with respect to $\W(t)$, if it takes values
	in $\mathcal{L}_2(\Q^{\frac{1}{2}}({\bfU}_1),{\bfU})$, is predictable and if
	$$\P\left(\int_0^T\|\phi(s)\|_{\mathcal{L}_2(\Q^{\frac{1}{2}}({\bfU}_1),{\bfU})}\d s<\infty\right)=1,$$
	where $\mathcal{L}_2(\Q^{\frac{1}{2}}({\bfU}_1),{\bfU})$ denotes the set of all Hilbert-Schmidt operators from $\Q^{\frac{1}{2}}({\bfU}_1)$ to ${\bfU}$.
	Hence, we can define the stochastic integral
	$$\int_0^t\phi(s)\d\W(s)=\int_0^t\phi(s)\circ J^{-1}\d\W(s)=\sum_{k\in\N}\int_0^t\phi(s)\bfe_k\d\beta_k(s),$$
	which is independent of the choice of ${\bfU}_1$ and $J$. For more details on stochastic integration, we are referring to  \cite{DaZ,WLMR2015,PR:2007}.

	Next, we define some probabilistic spaces of time-dependent functions that we deal throughout this work.	Given $1\leq q\leq \infty$ and a separable Banach space $\bfB$, we consider the Bochner space $\L^q(0,T;\bfB)$ formed by all the $\bfB-$valued measurable functions $\uu$ on $[0,T]$ such that
	\begin{align*}
		\|\uu\|_{\L^q(0,T;\bfB)}=\left(\int_0^T\|\uu(t)\|_\bfB^q\d t\right)^{\frac{1}{q}}<\infty.
	\end{align*}
{In the particular case of $\bfB=\L^q(\1)^d$, we shall denote $\L^q(0,T;\bfB)$ in short by $\bfL^{q}(\1_T)$.} Let  $\L_{\rm w}^q(0,T;\bfB)$ denote a space $\L^q(0,T;\bfB)$ with the weak topology $\rm w$.

The space $\C([0,T]{\color{red};}\bfB)$ represents the Banach space of all continuous functions from $[0,T]$ to $\bfB$ with the norm $$\|\uu\|_{\C([0,T];\bfB)}=\sup
_{t\in[0,T]}\|\uu(t)\|_\bfB.$$
Also, $\C_{\rw}([0,T];\bfB)$ denotes the subspace of $\L^\infty(0,T;\bfB)$ formed by weakly continuous functions from $[0,T]$ into $\bfB$.

	For $1\leq p<\infty$, $1\leq q\leq \infty$ and a separable Banach space $\bfB$, we denote by $\L^p(\Omega,\mathscr{F},\P;\L^q(0,\\T;\bfB)),$
	the space of all functions $\uu=\uu(x,t,\omega)$ defined on $\1\times[0,T]\times\Omega$ and with values in $\bfB$ such that
	$\uu$ is measurable with respect to $(t,\omega)$ and $\uu$ is $\mathscr{F}_t-$ measurable for a.e. $t$, and
	\begin{align*}
		\|\uu\|_{\L^p(\Omega,\mathscr{F},\P;\L^q(0,T;\bfB))}=
		\left\{
		\begin{array}{ll}
			\displaystyle \left\{\E\left[\left(\int_0^T\|\uu(t)\|_\bfB^q\d t\right)^{\frac{p}{q}}\right]\right\}^{\frac{1}{p}}<\infty, & 1\leq q<\infty, \\
			\displaystyle \left\{\E\left[\displaystyle \esssup_{t\in[0,T]} \|\uu(t)\|_\bfB ^p\right]\right\}^{\frac{1}{p}}<\infty, & q=\infty.
		\end{array}
		\right.
	\end{align*}

	By $\L^p(\Omega,\mathscr{F},\P;\C([0,T];\bfB))$, with $1\leq p<\infty$, we denote the space of all continuous and progressively $\{\mathscr{F}_t\}_{t\in [0,T]}-$measurable $\bfB-$valued stochastic processes $\uu$ such that
	\begin{align*}
		\|\uu\|_{\L^p(\Omega,\mathscr{F},\P;\C([0,T];\bfB))}=\left\{\E\left[\displaystyle \sup_{t\in[0,T]} \|\uu(t)\|_\bfB ^p\right]\right\}^{\frac{1}{p}}<\infty.
	\end{align*}

	We recall an important result on stochastic ordinary differential equations in finite-dimensions, where we denote by $\Mcal^{d\times d}(\R),$ the vector space of all real $d\times d-$matrices.
	\begin{lemma}\label{prop:e:sde}
		Let the map {$\mathbf{M}:[0,T]\times\R^d\times\Omega\longrightarrow\Mcal^{d\times d}(\R)$, defined by $(t, x, \omega)\mapsto \mathbf{M}=\mathbf{M}(t, x, \omega)$,} and
		{$\bfb:[0,T]\times\R^d\times\Omega\longrightarrow\R^d$, defined by $(t, x, \omega)\mapsto \bfb=\bfb(t, x, \omega)$} be both continuous in $x\in\R^d$ for $t\in[0,T]$ and $\omega\in\Omega$, and be progressively measurable on the probability space $(\Omega,\mathscr{F},\P)$ with respect to the filtration $\{\mathscr{F}_t\}_{t\in [0,T]}$.
{
		Assume the following conditions are verified for all $R\in(0,\infty)$: 
		\begin{enumerate}
            \item $\displaystyle \int_0^T \sup_{|x|\leq R}\big(|\bfb(t,x)|+|\mathbf{M}(t,x)|^2\big)dt<\infty$;
			\item $2\big(x-y\big)\cdot\big(\bfb(t,x)-\bfb(t,y)\big) + \big|\mathbf{M}(t,x)-\mathbf{M}(t,y)\big|^2\leq K_t(R)$,\quad $\forall\ t\in[0,T]$, $\forall\ x,y\in\R^d : |x|,|y|\leq R$;
			\item $2x\cdot \bfb(t,x) + \big|\mathbf{M}(t,x)\big|^2\leq K_t(1)\big(1+|x|^2\big)$,\quad $\forall\ t\in[0,T]$, $\forall\ x\in\R^d : |x|\leq R$;
		\end{enumerate}
}
		where $K_t(R)$ is an $\R^+-$valued process adapted to the filtration $\{\mathscr{F}_t\}_{t\in [0,T]}$ and such that
{		$$\alpha_S(R):=\int_0^S K_t(R)\d t<\infty,\quad \forall\ S\in [0,T].$$ }
		Then for any $\mathscr{F}_0-$measurable map $X_0:\Omega\longrightarrow \R^d,$  there exists a \emph{unique solution} to the stochastic differential equation
		$$ \d X(t)=\bfb\big(t,X(t)\big)\d t + \mathbf{M}\big(t,X(t)\big)\d\W(t).$$
	\end{lemma}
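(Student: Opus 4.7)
The plan is to combine a localization argument (to handle the local nature of the monotonicity/coercivity bounds) with a monotonicity-based uniqueness proof in the spirit of Krylov--Rozovskii. I would organize the proof into three parts: construction of an approximating sequence yielding existence, a priori estimates guaranteeing non-explosion, and pathwise uniqueness obtained from condition~(2).

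For the existence, I would first regularize $\bfb$ and $\mathbf{M}$ by spatial mollification to produce globally Lipschitz approximations $\bfb^{(n)}$ and $\mathbf{M}^{(n)}$. Classical Picard iteration then yields a unique strong solution $X^{(n)}$ for each~$n$. To extract a limit, fix $R>0$, set $\tau_R^{(n)} := \inf\{t\in[0,T] : |X^{(n)}(t)|\geq R\}$, and apply It\^o's formula to the process $|X^{(n)} - X^{(m)}|^2 \exp\big(-\int_0^t K_s(R)\,\d s\big)$ up to the stopping time $t\wedge \tau_R^{(n)}\wedge\tau_R^{(m)}$. The local monotonicity condition~(2) renders the drift nonpositive on this event, and the resulting Cauchy property in $\L^2(\Omega)$, together with the standard stopping argument leveraging the integrability $\alpha_S(R)<\infty$ granted by condition~(1), produces a limit $X$ solving the equation on $[0,T]$.

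For pathwise uniqueness, given two solutions $X,Y$ with common initial datum $X_0$, introduce $\sigma_R := \inf\{t : |X(t)|\vee|Y(t)|\geq R\}$ and apply It\^o's formula to $|X(t) - Y(t)|^2 \exp\big(-\int_0^t K_s(R)\,\d s\big)$ up to $t\wedge\sigma_R$. Taking expectation (after an auxiliary localization via $\rho_N := \inf\{s : \int_0^s K_r(R)\,\d r \geq N\}$, since $K_s(R)$ is random) and invoking~(2) yields that the expectation is nonpositive, forcing $X=Y$ on $[0,\sigma_R]$. Global uniqueness then follows by sending $R\to\infty$, once non-explosion is established.

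Non-explosion is obtained by applying It\^o's formula to $\log\big(1+|X(t\wedge\tau_R)|^2\big)\exp\big(-\int_0^{t}K_s(1)\,\d s\big)$: by the weak coercivity~(3), the drift is nonpositive, giving a uniform-in-$R$ bound on $\E\big[\log(1+|X(t\wedge\tau_R)|^2)\big]$ and hence, via Chebyshev's inequality, $\P(\tau_R<T)\to 0$ as $R\to\infty$. The main obstacle I foresee is handling the $\omega$-dependence of $K_t(R)$: since integrability of $K_\cdot(R)$ in $\omega$ is not assumed, each use of It\^o's formula must be accompanied by an auxiliary stopping in $\omega$ before expectations are taken, after which one passes to the limit via monotone convergence. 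Once these technicalities are handled, the three parts assemble into the unique global solution as claimed.
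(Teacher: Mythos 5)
The paper provides no proof of its own for this lemma; it simply cites \cite[Theorem~3.1.1]{PR:2007}, so your proposal is a from-scratch reconstruction. At a high level---globally Lipschitz approximants, monotonicity-driven Cauchy estimates under localization, coercivity-driven non-explosion via a logarithmic Lyapunov function, and an auxiliary stopping in $\omega$ to tame the random $K_t(R)$---it follows the standard template for such results.

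There is a concrete gap in your existence step, though. The drift you must control after applying It\^o's formula to $|X^{(n)}-X^{(m)}|^2\exp\bigl(-\int_0^t K_s(R)\,\d s\bigr)$ is
$2\bigl(X^{(n)}-X^{(m)}\bigr)\cdot\bigl(\bfb^{(n)}(X^{(n)})-\bfb^{(m)}(X^{(m)})\bigr)+\bigl|\mathbf{M}^{(n)}(X^{(n)})-\mathbf{M}^{(m)}(X^{(m)})\bigr|^2$,
whereas condition~(2) is assumed only for the original pair $\bfb,\mathbf{M}$, and moreover only pairs a coefficient with itself. The cross terms between $\bfb^{(n)}$ and $\bfb^{(m)}$ (and between $\mathbf{M}^{(n)}$ and $\mathbf{M}^{(m)}$) produce additive mollification errors that are uniform on $\{|x|\leq R\}$ but are \emph{not} multiplied by $|X^{(n)}-X^{(m)}|^2$, so ``the drift is nonpositive'' is false as stated; one gets at best a bound of the form $K_t(R)|X^{(n)}-X^{(m)}|^2+\mathrm{err}_{n,m}(t)$ with $\mathrm{err}_{n,m}\to0$, and a further Gronwall argument is then needed to conclude the Cauchy property. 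You must either carry these error terms explicitly, or use a scheme that evaluates the \emph{original} $\bfb,\mathbf{M}$ at each step (for instance an Euler time-discretization, as in the cited source), so that (2) and (3) apply verbatim. Two smaller points: mollification of a merely continuous coefficient yields only locally Lipschitz approximants, so a spatial truncation is also required before running Picard; and condition~(2) as printed in the lemma is missing the factor $|x-y|^2$ on the right-hand side, which your Gronwall-based uniqueness argument tacitly relies on (it is present in \cite[Theorem~3.1.1]{PR:2007}).
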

	
	\begin{proof}
See \cite[Theorem 3.1.1]{PR:2007}.
	\end{proof}
	

	We recall also the following inequalities which are classical in the theory of $p$-Laplace equations.
	\begin{lemma}\label{GM}
		For all {$\mathbf{M},\ \mathbf{N}\in\R^{d\times d}$}, the following assertions hold true:
{
		\begin{alignat}{2}
			\label{2.2}
			2\leq p<\infty & \quad\Rightarrow\quad \frac{1}{2^{p-1}}|\mathbf{M}-\mathbf{N}|^p\leq \big(|\mathbf{M}|^{p-2}\mathbf{M}-|\mathbf{N}|^{p-2}\mathbf{N}\big):\big(\mathbf{M}-\mathbf{N}\big);&& \\
			\label{2.3}
			1<p<2 & \quad\Rightarrow\quad (p-1)|\mathbf{M}-\mathbf{N}|^2\leq \big(|\mathbf{M}|^{p-2}\mathbf{M}-|\mathbf{N}|^{p-2}\mathbf{N}\big):(\mathbf{M}-\mathbf{N})\big(|\mathbf{M}|^p+|\mathbf{N}|^p\big)^{\frac{2-p}{p}}.&&
		\end{alignat}
}
	\end{lemma}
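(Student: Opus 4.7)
Both estimates are classical monotonicity inequalities associated with the $p$-Laplace nonlinearity $\mathbf{M}\mapsto|\mathbf{M}|^{p-2}\mathbf{M}$. My plan is to derive them in parallel starting from the parametric-integration identity
\[
\bigl(|\mathbf{M}|^{p-2}\mathbf{M}-|\mathbf{N}|^{p-2}\mathbf{N}\bigr):(\mathbf{M}-\mathbf{N})=\int_{0}^{1}\Bigl[(p-2)|\mathbf{C}(t)|^{p-4}\bigl(\mathbf{C}(t):(\mathbf{M}-\mathbf{N})\bigr)^{2}+|\mathbf{C}(t)|^{p-2}|\mathbf{M}-\mathbf{N}|^{2}\Bigr]\,dt,
\]
where $\mathbf{C}(t):=(1-t)\mathbf{N}+t\mathbf{M}$, so that $\mathbf{C}'(t)=\mathbf{M}-\mathbf{N}$. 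This follows by applying the chain rule to the scalar function $t\mapsto|\mathbf{C}(t)|^{p-2}\mathbf{C}(t):(\mathbf{M}-\mathbf{N})$ and integrating from $0$ to $1$.

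For the sub-quadratic range $1<p<2$, the coefficient $p-2$ is negative, so Cauchy--Schwarz $(\mathbf{C}:(\mathbf{M}-\mathbf{N}))^{2}\leq|\mathbf{C}|^{2}|\mathbf{M}-\mathbf{N}|^{2}$ yields the pointwise lower bound $(p-1)|\mathbf{C}(t)|^{p-2}|\mathbf{M}-\mathbf{N}|^{2}$ for the integrand. I would then combine the triangle inequality $|\mathbf{C}(t)|\leq(1-t)|\mathbf{N}|+t|\mathbf{M}|\leq\bigl(|\mathbf{M}|^{p}+|\mathbf{N}|^{p}\bigr)^{1/p}$ with the fact that $p-2<0$ to flip the bound into $|\mathbf{C}(t)|^{p-2}\geq\bigl(|\mathbf{M}|^{p}+|\mathbf{N}|^{p}\bigr)^{(p-2)/p}$. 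Integrating over $t\in[0,1]$ and rearranging produces \eqref{2.3} precisely, with constant $p-1$.

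For the super-quadratic range $p\geq 2$, both summands in the integrand are nonnegative, but the crude bound obtained by discarding the first one, $\int_{0}^{1}|\mathbf{C}(t)|^{p-2}\,dt\geq C\,|\mathbf{M}-\mathbf{N}|^{p-2}$, is insufficient for $p>3$ (as the configuration $\mathbf{M}=-\mathbf{N}$ already shows). The plan is instead to invoke the sharper DiBenedetto--Chen-type estimate
\[
\bigl(|\mathbf{M}|^{p-2}\mathbf{M}-|\mathbf{N}|^{p-2}\mathbf{N}\bigr):(\mathbf{M}-\mathbf{N})\geq c_{p}\,|\mathbf{M}-\mathbf{N}|^{2}\bigl(|\mathbf{M}|+|\mathbf{N}|\bigr)^{p-2},
\]
which itself is established via the integral identity above together with a sub-case analysis on the size of $|\mathbf{C}(t)|$ relative to $|\mathbf{M}-\mathbf{N}|$. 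Since $|\mathbf{M}|+|\mathbf{N}|\geq|\mathbf{M}-\mathbf{N}|$ and $p-2\geq 0$, one has $(|\mathbf{M}|+|\mathbf{N}|)^{p-2}\geq|\mathbf{M}-\mathbf{N}|^{p-2}$, and a careful tracking of constants yields the factor $1/2^{p-1}$ claimed in \eqref{2.2}.

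The main obstacle is therefore the super-quadratic case: Cauchy--Schwarz and plain monotonicity leave a gap, and uniform-convexity information for $|\cdot|^{p}$ (essentially Clarkson's second inequality, or equivalently the DiBenedetto--Chen bound just quoted) is needed to recover the sharp constant. Both inequalities are classical; a fully detailed derivation can be found in, e.g., Lindqvist's \emph{Notes on the $p$-Laplace equation} or in DiBenedetto's monograph on degenerate parabolic equations.
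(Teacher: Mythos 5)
The paper does not actually prove Lemma \ref{GM}: it simply invokes \cite[Lemmas 5.1 and 5.2]{GM:1975}, so there is no in-paper argument to compare against, and any self-contained derivation you supply is additional content.

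Your treatment of the sub-quadratic range $1<p<2$ is correct and essentially complete. Starting from the integral identity, applying Cauchy--Schwarz to the (negative-coefficient) first term, and then bounding $|\mathbf{C}(t)|\leq\max(|\mathbf{M}|,|\mathbf{N}|)\leq(|\mathbf{M}|^p+|\mathbf{N}|^p)^{1/p}$ before raising to the negative power $p-2$ delivers exactly \eqref{2.3} with the constant $p-1$. The only point you should make explicit is that when $\mathbf{C}(t_0)=0$ for some $t_0$ (anti-parallel $\mathbf{M},\mathbf{N}$) the integrand scales like $|t-t_0|^{p-2}$, which is integrable for $p>1$, so the identity is still legitimate; as written you treat this as obvious, which is fair but deserves a sentence.

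For $p\geq 2$, however, there is a genuine gap. You correctly observe that discarding the positive term $(p-2)|\mathbf{C}|^{p-4}(\mathbf{C}:(\mathbf{M}-\mathbf{N}))^2$ and estimating $\int_0^1|\mathbf{C}(t)|^{p-2}\,dt$ alone cannot give \eqref{2.2} once $p>3$; the anti-parallel example makes that concrete. But you then invoke a ``DiBenedetto--Chen-type'' bound with an unspecified constant $c_p$ and promise ``a careful tracking of constants,'' without ever exhibiting that $c_p\geq 2^{-(p-1)}$ or carrying out the sub-case analysis you gesture at. My own attempt at the naive sub-case splitting (splitting on whether $|\mathbf{C}(t)|\gtrless\tfrac14|\mathbf{M}-\mathbf{N}|$) lands at $2^{-(2p-3)}$, which is strictly worse than $2^{-(p-1)}$ for $p>2$, so the route you sketch does not obviously close. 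What does close it, and far more cheaply, is the purely algebraic identity
\begin{equation*}
\big(|\mathbf{M}|^{p-2}\mathbf{M}-|\mathbf{N}|^{p-2}\mathbf{N}\big):(\mathbf{M}-\mathbf{N})
=\tfrac12\big(|\mathbf{M}|^{p-2}+|\mathbf{N}|^{p-2}\big)|\mathbf{M}-\mathbf{N}|^2
+\tfrac12\big(|\mathbf{M}|^{p-2}-|\mathbf{N}|^{p-2}\big)\big(|\mathbf{M}|^2-|\mathbf{N}|^2\big),
\end{equation*}
whose second summand is nonnegative for $p\geq 2$. Discarding it and bounding
$|\mathbf{M}|^{p-2}+|\mathbf{N}|^{p-2}\geq\max(|\mathbf{M}|,|\mathbf{N}|)^{p-2}\geq\big(\tfrac{|\mathbf{M}|+|\mathbf{N}|}{2}\big)^{p-2}\geq\big(\tfrac{|\mathbf{M}-\mathbf{N}|}{2}\big)^{p-2}$
yields \eqref{2.2} with the stated constant $2^{-(p-1)}$ in two lines, no integral identity needed; this is essentially the Glowinski--Marrocco / Simon argument. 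I would recommend replacing your $p\geq 2$ paragraph with this, or else actually supplying the sub-case analysis with an explicit constant.
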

	\begin{proof}
		{The proof combines \cite[Lemmas 5.1 and 5.2]{GM:1975}}.
	\end{proof}
	
	From (\ref{2.2}) and (\ref{2.3}), one easily gets
	\begin{align}\label{2.4}
		\big(|\bfD(\uu)|^{p-2}\bfD(\uu)-|\bfD(\vv)|^{p-2}\bfD(\vv)\big):\big(\bfD(\uu)-\bfD(\vv)\big)\geq 0,\  \forall \  \uu, \vv\in {\W_0^{1,p}(\1)^d},
	\end{align}
	and whenever $1<p<\infty$.
	As a consequence of (\ref{2.4}), the operator
\begin{equation}\label{op:A(u)}
\bfA(\uu):=|\bfD(\uu)|^{p-2}\bfD(\uu)
\end{equation}
is said to be \emph{monotone } for any $p$ such that $1<p<\infty$.
	
	\subsection{Probabilistic weak formulation}
	
	We are interested in martingale solutions, that is, weak solutions in the probabilistic sense, to the stochastic problem (\ref{1.1})-(\ref{1.4}).
	This means that when seeking  martingale solutions of (\ref{1.1})-(\ref{1.4}), constructing a  filtered probability space $(\Omega,\mathscr{F},\{\mathscr{F}_t\}_{t\in [0,T]},\P)$ and a cylindrical Wiener process $\W$ on it are both part of the problem.
	We shall prove that a martingale solution typically exists for given Borel measures $\Lambda_0$ and $\Lambda_{\f}$ that account for initial and forcing laws as follows,
	\begin{alignat}{2}
		\label{3.1}
		& \Lambda_0=\P\circ\uu_0^{-1},\quad \mbox{i.e., }\ \P\big(\uu_0\in \mathrm{U}\big) = \Lambda_0(\mathrm{U}),\qquad \forall\ \mathrm{U}\in \mathscr{B}(\bfV), \\
		\label{3.2}
		&  \Lambda_{\f}=\P\circ\f^{-1},\quad \mbox{i.e., }\ \P\big(\f\in \mathrm{U}\big) = \Lambda_{\f}(\mathrm{U}),\qquad \forall\ \mathrm{U}\in \mathscr{B}(\bfL^2(\1_T)).
	\end{alignat}
	It should be noted that even if the initial datum $\uu_0$ and the forcing term $\f$ are given, they might live on different probability spaces, and therefore $\uu_0$ and $\uu(0)$ from one hand, and $\f_t$ and $\f(t)$ on the other, can only coincide in law.
	In particular, the underlying probability space is not a priori known but becomes part of the solution.
	Next, we define martingale solutions for the stochastic problem (\ref{1.1})-(\ref{1.4}) starting with an initial law defined on $\bfV$ and a forcing law defined on $\bfL^2(\1_T)$.
	
	\begin{definition}\label{def.1}
		Let $\Lambda_0$, $\Lambda_{\f}$ be Borel probability measures on $\bfV$ and $\bfL^2(\1_T)$, respectively.
		We say that
		$$\big((\Omega,\mathscr{F},\{\mathscr{F}_t\}_{t\in [0,T]},\P),\uu,\uu_0,\f,\W\big)$$
		is a \emph{martingale solution} to the stochastic problem (\ref{1.1})-(\ref{1.4}), with initial datum $\Lambda_0$ and forcing term $\Lambda_{\f}$,
		if:
		\begin{enumerate}
			\item $\big(\Omega,\mathscr{F},\P)$ is a stochastic basis with a complete right-continuous filtration $\{\mathscr{F}_t\}_{t\in [0,T]}$;
			\item $\W$ is a cylindrical $\{\mathscr{F}_t\}_{t\in [0,T]}-$adapted Wiener process;
			\item $\uu$ is a progressively $\{\mathscr{F}_t\}_{t\in [0,T]}-$measurable stochastic process  with $\P-$a.s. paths
			$t\mapsto\uu(t,\omega)\in \L^\infty(0,T;\bfV)\cap \L^p(0,T;{\W_{0}^{1,p}(\1)^d}),$ with a continuous modification having paths in $\C([0,T];\bfV)$;
			\item $\uu(0)\ (:=\uu_0)$ is progressively $\{\mathscr{F}_t\}_{t\in [0,T]}-$measurable on the probability space \\$(\Omega,\mathscr{F},\P)$,  with $\P-$a.s. paths $\uu(0,\omega)\in \bfV$ and $\Lambda_0=\P\circ\uu_0^{-1}$ in the sense of (\ref{3.1});
			\item $\f$ is an $\{\mathscr{F}_t\}_{t\in [0,T]}-$adapted stochastic process $\P-$a.s. paths $\f(t,\omega)\in \bfL^2(\1_T)$ and
			$\Lambda_{\f}=\P\circ\f^{-1}$ in the sense of (\ref{3.2});
			\item for every $\bfi\in {\C_0^{\infty}(\1)^d}$ with $\operatorname{div}\bfi=0$ and all $t\in[0,T],$ the following identity holds $\P-$a.s.
			\begin{align}\label{3p3}\nonumber
				& \int_\1\uu(t)\cdot\bfi \,\d\x + \kappa\int_\1\nabla\uu(t):\nabla\bfi\,\d\x - \int_0^t\int_\1 {\uu\otimes\uu}:\mathds{\nabla}\bfi\,\d\x \d s \\&\nonumber\quad +
					\nu\int_0^t\int_\1{|\bfD(\uu)|^{p-2}\bfD(\uu)}:\bfD\bfi\,\d\x \d s
					\\&=
				\int_\1\uu_0\cdot\bfi\,\d\x + \kappa\int_\1\nabla\uu_0:\nabla\bfi\,\d\x +
					\int_0^t\int_\1{\f}\cdot\bfi\,\d\x\d s + \int_0^t\int_\1{\Phi(\uu)\d\W(s)}\cdot\bfi\,\d\x.
			\end{align}
		\end{enumerate}
	\end{definition}
	We are now interested  to know whether it is possible to find  a probabilistically strong solution for the problem (\ref{1.1})-(\ref{1.4}), for a given initial velocity $\uu_0$ and forcing  $\f$ (which are random variables rather than  probability laws) on a given probability space.
\begin{definition}[Probabilistically strong solution]\label{def1.2}
	We are given a stochastic basis \\ $(\Omega,\mathscr{F},\{\mathscr{F}_t\}_{t\geq0},\P)$, initial datum $\uu_0$ and a forcing term $\f$. Then, the problem \eqref{1.1}-\eqref{1.4}  has a pathwise \emph{strong probabilistic solution} if and only if there exists a $\uu:[0,T]\times \Omega\to \bfV$ with $\P-$a.s., paths
	\begin{align*}
		\uu(\cdot,\omega) \in \L^\infty(0,T;\bfV)\cap \L^p(0,T;{\W_0^{1,p}(\1)^d}),
	\end{align*} with a continuous modification having $\P-$a.s. paths in $\C([0,T];\bfV)$, and \eqref{3p3} holds for all $\bphi\in\bfV.$ 	
\end{definition}
		\begin{definition}[Pathwise uniqueness]\label{def1.3}
			For $i=1,2$, let  $\uu_i$ be any solution on the stochastic basis $(\Omega,\mathscr{F},\{\mathscr{F}_t\}_{t\geq0},\P)$  to the system \eqref{1.1}-\eqref{1.4} with initial datum $\uu_0$ and forcing term ${\f}$. Then, the solutions of the system \eqref{1.1}-\eqref{1.4} are \emph{pathwise unique} if and only if
			\begin{align*}
				\P\big\{\uu_1(t)=\uu_2(t),\ \forall \ t\geq 0\big\}=1.
			\end{align*}
	\end{definition}
	
	\subsection{Assumptions and main result}
	The existence of martingale solutions to the stochastic problem (\ref{1.1})-(\ref{1.4}) shall be carried out in the forthcoming pages under suitable assumptions on the initial datum $\uu_0$, forcing term $\f$ and on the noise coefficient $\Phi(\uu)$, and for a suitable range of the summability Lebesgue exponent $p$.	On the initial and forcing laws, we assume that for some constant $\gamma=\gamma(p,d)$ (to be determined further on)
	\begin{alignat}{2}
		& \displaystyle \int_\bfV\|\bz\|_{\bfV}^\gamma \d\Lambda_0(\bz)<\infty, &&  \label{3.4}\\
		& \displaystyle\int_{\bfL^{2}(\1_T)}\|\bfg\|_{\L^{2}(\1_T)}^\gamma \d\Lambda_{\f}(\bfg)<\infty. \label{3.5} &&
	\end{alignat}
	
	Let $\bfU$ be a Hilbert space with orthonormal basis $\{\bfe_k\}_{k\in\N}$ and let $\mathcal{L}_2(\bfU,\L^2(\1)^d)$ be the space of Hilbert-Schmidt operators form $\bfU$ to $\L^2(\1)^d$ (to simplify writing, in the sequel we denote the operator norm $\|\cdot\|_{\mathcal{L}_2(\bfU,\L^2(\1)^d)}$ solely by $\|\cdot\|_{\mathcal{L}_2}$). Moreover, we define an auxiliary space $\bfU_0\supset \bfU$ as
	\begin{align*}
		\bfU_0:=\bigg\{\vv =\sum_{k\in\N} \alpha_k \bfe_k: \sum_{k\in\N}\frac{\alpha_k^2}{k^2}<\infty\bigg\}
	\end{align*}equipped with the norm
\begin{align*}
	\|\vv\|_{\bfU_0}^2:=\sum_{k\in\N}\frac{\alpha_k^2}{k^2},  \ \ \vv=\sum_{k\in\N} \alpha_k\bfe_k.
\end{align*}Throughout the sequel, we consider a cylindrical $\{\mathscr{F}_t\}-$Wiener process $\W=\{\W(t)\}_{t\geq 0}$ which has the form
\begin{align*}
	\W(t)=\sum_{k\in\N}\bfe_k \beta_k(t),
\end{align*}where $\{\beta_k\}_{k\in\N}$ are independent real-valued Brownian motions.
The embedding $\bfU\hookrightarrow \bfU_0$ is Hilbert-Schmidt and trajectories of $\W$ are $\P$-a.s. continuous with values in $\bfU_0$ (see Subsection \ref{SS}).
	In this article, we suppose that the noise coefficient $\Phi(\uu)$ satisfies  linear growth and Lipschitz conditions.	More precisely, we assume that for each $\w\in{\L^2(\1)^d}$ there is a mapping
	\begin{align*}
			\Phi(\w): &\bfU\longrightarrow {\L^2(\1)^d}  \\
			& \bfe_k\longmapsto  \Phi(\w)\bfe_k=\phi_k(\w),
	\end{align*}
	where $\{\bfe_k\}_{k\in\N}$ is an orthonormal basis of $\bfU$, such that $\phi_k\in \C(\R^d)$ and the following conditions hold for some constants $K,L> 0$:
	\begin{align}\label{3.6a}
		\displaystyle \sum_{k\in\N}|\phi_k(\bxi)|\leq K(1+|\bxi|),\  \mbox{ and } \
		\displaystyle \sum_{k\in\N}| \phi_k(\bxi)-\phi_k(\boldsymbol{\zeta})|\leq L|\bxi-\boldsymbol{\zeta}|,\ \ {\bxi, \boldsymbol{\zeta}\in\R^d}.
	\end{align}
Moreover, we are assuming that the following condition holds for some constant $C>0$,
\begin{align}\label{3.6b}
	\sup_{k\in\N}k^2|\phi_k(\bxi)|^2\leq C(1+|\bxi|^2),\qquad {\bxi\in\R^d}.
\end{align}

	\begin{theorem}\label{thm:exist}
		Let $\1\subset\R^d$ be a bounded domain with a smooth boundary $\partial\1$ of class $\C^2$, and assume that conditions
	\eqref{3.4} and \eqref{3.5} hold for

 \begin{equation}\label{gamma}
\gamma\geq \max\bigg\{\frac{pd}{d-2},2+\frac{2p}{d-2}, \frac{2d}{d-2}\bigg\} \ (d\neq 2)\ \text{ and }\ \gamma\geq 2\ (d=2),
\end{equation}

 and \eqref{3.6a}, \eqref{3.6b}  are fulfilled.
		If $2\leq d\leq 4$ and
\begin{align}\label{3.7}
			p> \frac{2d}{d+2},
		\end{align}
		then there exists, at least, a \emph{martingale solution}
		$$\big((\overline{\Omega},\overline{\mathscr{F}},\{\overline{\mathscr{F}}_t\}_{t\in [0,T]},\overline{\P}),\overline{\uu},\overline{\uu}_0,\overline{\f},\overline{\W}\big)$$
	in the sense of Definition \ref{def.1}	to the stochastic problem \eqref{1.1}-\eqref{1.4}. 		
	\end{theorem}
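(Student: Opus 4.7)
The plan is to follow the four-step scheme sketched in the introduction. First, I would regularize the problem by adding to the extra-stress a stabilizing term of the form $\e|\bfD(\uu)|^{q-2}\bfD(\uu)$ with $q>\max\{p,2\}$ large enough to render the resulting operator coercive on every finite-dimensional subspace; this plays the role of the $\L^\infty$-truncation of~\cite{Breit} but respects the Voigt regularization. For this stabilized system, construct a Galerkin approximation based on the $\L^2$-orthogonal, $\bfV$-orthogonal basis $\{\bpsi_k\}_{k\in\N}$ of eigenfunctions of the Stokes operator restricted to divergence-free fields. Since $(I-\kappa\Delta)$ is invertible on the $n$-dimensional Galerkin subspace, the projected equation~\eqref{1.1} reduces to an SDE in $\R^n$ whose coefficients, thanks to the stabilization, satisfy the hypotheses of Lemma~\ref{prop:e:sde}; this yields a unique adapted Galerkin solution $\uu^n$ on the original stochastic basis.

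The next step is to derive uniform-in-$n$ estimates. Applying It\^o's formula to $\|\uu^n\|_2^2+\kappa\|\nabla\uu^n\|_2^2$, noting that the convective term vanishes by the divergence-free condition, and combining with \eqref{3.6a}-\eqref{3.6b}, the Burkholder-Davis-Gundy inequality and Gr\"onwall, one obtains bounds for $\uu^n$ in
\begin{equation*}
\L^\gamma\big(\Omega;\L^\infty(0,T;\bfV)\big)\cap \L^\gamma\big(\Omega;\L^p(0,T;\W_0^{1,p}(\1)^d)\big),
\end{equation*}
with $\gamma$ as in \eqref{gamma}. The Voigt term supplies the crucial $\L^\infty(0,T;\bfV)$ control that is absent in the classical power-law model, and it is precisely this control that permits the lower bound $p>\frac{2d}{d+2}$ of \eqref{3.7}: under it, $\bfV_p\subset\bfH\subset\bfV_p^\ast$ is a Gelfand triple with compact embedding, while convection is tamed by the Sobolev embedding $\bfV\hookrightarrow\L^4(\1)^d$ valid for $2\leq d\leq 4$.

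Next, I would establish tightness of the laws $\{\P\circ(\uu^n)^{-1}\}$ on a path space of the form $\L^p(0,T;\L_\sigma^p(\1)^d)\cap \C_{\rw}([0,T];\bfV)$ by combining the spatial bounds above with time-equicontinuity estimates of Aldous type obtained by testing small time increments of the equation against $\{\bpsi_k\}$. Invoking the Jakubowski-Skorokhod representation theorem (the target space is quasi-Polish) produces a new stochastic basis $(\overline{\Omega},\overline{\mathscr{F}},\{\overline{\mathscr{F}}_t\},\overline{\P})$ together with processes $\overline{\uu}^n,\overline{\uu},\overline{\uu}_0,\overline{\f},\overline{\W}$ whose laws match the originals and for which $\overline{\uu}^n\to\overline{\uu}$ $\overline{\P}$-a.s.\ in the chosen topology. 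A standard martingale-identification argument confirms that $\overline{\W}$ is an $\{\overline{\mathscr{F}}_t\}$-cylindrical Wiener process; passage to the limit in the linear and convective terms is then routine, with convection passing via strong $\L^2$-convergence and the noise via the usual It\^o-integral convergence lemmas.

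The main obstacle is identifying the weak limit of the nonlinear stress $|\bfD(\overline{\uu}^n)|^{p-2}\bfD(\overline{\uu}^n)$ with $|\bfD(\overline{\uu})|^{p-2}\bfD(\overline{\uu})$. Here I would invoke the pressure reconstruction of Section~\ref{sec4} to turn the limiting weak identity into a full PDE featuring an unknown limit stress $\overline{\bfS}$, and then run the Minty-Browder monotonicity argument based on~\eqref{2.4}. In contrast to the deterministic case, one cannot test directly against $\overline{\uu}-\bphi$; instead, one applies It\^o's formula to $\|\overline{\uu}^n-\bphi\|_2^2+\kappa\|\nabla(\overline{\uu}^n-\bphi)\|_2^2$ for a smooth divergence-free $\bphi$, exploits the stabilization to control the defect, passes $n\to\infty$, and identifies $\overline{\bfS}=|\bfD(\overline{\uu})|^{p-2}\bfD(\overline{\uu})$ via the one-parameter trick $\bphi=\overline{\uu}-\lambda\bpsi$ with $\lambda\downarrow 0$. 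A final Skorokhod argument sending $\e\to 0$ (uniform, since all the previous estimates are $\e$-independent) yields the martingale solution in the sense of Definition~\ref{def.1}.
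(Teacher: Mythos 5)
Your plan matches the paper's overall architecture — stabilize, Galerkin, uniform It\^o/BDG/Gr\"onwall estimates exploiting the Voigt term's $\L^\infty(0,T;\bfV)$ control, tightness and Jakubowski-Skorokhod, pressure decomposition, Minty-Browder for the stress, then remove the stabilization — and you correctly identify the two structural points that the paper is built around: that $\kappa\partial_t\Delta\uu$ gives the $\L^\infty(0,T;\bfV)$ bound which admits $p>\tfrac{2d}{d+2}$ via the Gelfand triple, and that $\bfV\hookrightarrow\L^4(\1)^d$ for $d\leq 4$ tames the convection. Where you diverge technically is worth recording.

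\begin{itemize}
\item \emph{The stabilization.} The paper adds the zero-order term $\alpha|\uu|^{q-2}\uu$ with $q\geq\max\{2p',3\}$, inherited from Wolf/Breit, whose explicit purpose is to make $\bfG_n=\uu_n\otimes\uu_n+\nabla\Delta^{-1}\bfa(\uu_n)-\nu\bfA(\uu_n)+\bfF$ uniformly integrable in $\L^{q_0}$ with $q_0=\min\{p',q'\}>1$; the exponent $q$ has to be at least $2p'$ for this arithmetic to close. Your first-order term $\e|\bfD(\uu)|^{q-2}\bfD(\uu)$ with only $q>\max\{p,2\}$ provides coercivity for Lemma~\ref{prop:e:sde}, but it is not obviously strong enough to reproduce the required $\L^{q_0}$ bound on $\bfG_n$; you would at minimum have to retune $q$. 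It is also not really ``the role of the $\L^\infty$-truncation'': in the paper's scheme the stabilization and the $\L^\infty$-truncation of \cite{Breit} are \emph{distinct} devices, and the point the paper stresses is that, precisely because of the Voigt regularity, \emph{no} truncation is needed in the Minty step.
\item \emph{The Galerkin basis.} The paper does not use Stokes eigenfunctions: it takes a basis orthonormal in $\W^{s,2}_0(\1)^d$ with $s>1+\tfrac{d}{2}$, so that the projection $P^n_s$ is bounded in $\W^{s,2}\hookrightarrow\W^{1,\infty}$, which is what makes the duality estimate $\|\bfG_n\|_{q_0}\|\nabla P^n_s(\bphi)\|_{q_0'}$ in the time-regularity bound \eqref{IP} go through. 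Stokes eigenfunctions can be made to work (their spectral projection commutes with powers of the Stokes operator, hence is bounded on $D(A^{s/2})$ for smooth $\partial\1$), but you would need to spell out this boundedness; as stated, the bound on $\nabla P^n(\bphi)$ in $\L^{q_0'}$ is a gap.
\item \emph{Tightness and the Minty step.} You propose Aldous' criterion and the one-parameter trick $\bphi=\overline{\uu}-\lambda\bpsi$; the paper instead bounds $(\I-\kappa\Delta)\uu_n$ in $\W^{\eta,q_0}(0,T;\W_\sigma^{-\tilde s,q_0})$ and invokes Flandoli--G\c{a}tarek, and in the Minty step subtracts the two It\^o identities for $\|(\I-\kappa\Delta)^{1/2}\overline\uu_n\|_2^2$ and $\|(\I-\kappa\Delta)^{1/2}\overline\uu\|_2^2$, obtaining $\limsup_n\overline\E\int(\bfA(\overline\uu_n)-\bfA(\overline\uu)):\bfD(\overline\uu_n-\overline\uu)\leq0$ and hence $\bfD(\overline\uu_n)\to\bfD(\overline\uu)$ a.e.\ via Lemma~\ref{GM}, with no parameter trick. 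Both routes are legitimate; the paper's avoids arguing about arbitrary test functions. Note however that in the $\alpha\to0$ (your $\e\to0$) limit the paper works with $\overline\vv_{n,1}=\overline\uu_n-\overline\uu-\nabla(\overline\pi_h^n-\overline\pi_h)$: the harmonic-pressure gradient has to be subtracted so that the difference equation is in a usable form for It\^o's formula, and the admissibility of It\^o's formula here is itself nontrivial (see \cite[Theorem 2.1]{IGDS}, verified in Subsection~\ref{PU}); your sketch does not mention this.
\end{itemize}

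In short: correct plan, same route, but the stabilization exponent, the projection bound on your chosen basis, the harmonic-pressure correction in the limit passage, and the justification of the infinite-dimensional It\^o formula are all points where the paper works harder than your sketch lets on.
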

	
	 \begin{theorem}\label{thm2.7}
		Under the assumptions of Theorem \ref{thm:exist}, there exists a unique \emph{probabilistically strong solution}  of the system \eqref{1.1}-\eqref{1.4} in the sense of Definition \ref{def1.2}.
	\end{theorem}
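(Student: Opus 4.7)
The argument follows the classical Yamada-Watanabe scheme: combine the existence of a martingale solution provided by Theorem~\ref{thm:exist} with pathwise uniqueness in the sense of Definition~\ref{def1.3}, and invoke the Yamada-Watanabe theorem to promote the martingale solution to a unique probabilistic strong solution. The only non-trivial ingredient to be supplied is pathwise uniqueness.

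To this end, let $\uu_1$ and $\uu_2$ be two probabilistic strong solutions on the same stochastic basis $(\Omega,\mathscr{F},\{\mathscr{F}_t\}_{t\geq 0},\P)$, with common initial datum $\uu_0$, forcing $\f$, and driven by the same $\W$. Put $\w:=\uu_1-\uu_2$, so that $\w(0)=\mathbf 0$ and
$$
\d(\w-\kappa\Delta\w)=\operatorname{div}\!\bigl[\nu\bigl(\bfA(\uu_1)-\bfA(\uu_2)\bigr)-\bigl(\uu_1\otimes\uu_1-\uu_2\otimes\uu_2\bigr)\bigr]\d t+\bigl(\Phi(\uu_1)-\Phi(\uu_2)\bigr)\d\W,
$$
with $\bfA$ as in \eqref{op:A(u)}. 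I would apply It\^o's formula in the Gelfand triple induced by the isomorphism $I-\kappa\Delta:\bfV\to\bfV^\ast$, to the functional $E(\w):=\|\w\|_{2}^{2}+\kappa\|\nabla\w\|_{2}^{2}$, which is equivalent to $\|\w\|_\bfV^{2}$. Thanks to the monotonicity inequality \eqref{2.4}, the nonlinear diffusion contribution $\nu\int_0^t\!\int_\1(\bfA(\uu_1)-\bfA(\uu_2)):\bfD(\w)\,\d\x\,\d s$ is nonnegative and can simply be dropped (this works uniformly in $1<p<\infty$, whether $p\geq 2$ or $1<p<2$), producing
\begin{align*}
E(\w(t))&\leq 2\int_0^t\!\!\int_\1\bigl(\uu_1\otimes\uu_1-\uu_2\otimes\uu_2\bigr):\nabla\w\,\d\x\,\d s\\
&\quad+C\int_0^t\|\Phi(\uu_1)-\Phi(\uu_2)\|_{\mathcal{L}_2}^{2}\,\d s+M(t),
\end{align*}
where $M(t)$ is a local martingale arising from the stochastic integral.

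Splitting $\uu_1\otimes\uu_1-\uu_2\otimes\uu_2=\uu_1\otimes\w+\w\otimes\uu_2$ and exploiting $\operatorname{div}\uu_1=0$, the convective integral reduces to $\int_\1(\w\cdot\nabla)\uu_2\cdot\w\,\d\x$, which by H\"older and the Sobolev embedding $\bfV\hookrightarrow\L^{4}(\1)^d$ (available precisely because $2\leq d\leq 4$) is bounded by $C\|\uu_2\|_\bfV E(\w)$; the Lipschitz assumption \eqref{3.6a} dominates the It\^o correction by $L^{2}\|\w\|_{2}^{2}\leq L^{2}E(\w)$. I would then localise with the stopping times $\tau_R:=\inf\{t\in[0,T]:\|\uu_1(t)\|_\bfV+\|\uu_2(t)\|_\bfV>R\}\wedge T$, which increase to $T$ $\P$-a.s. as $R\to\infty$ by the regularity granted in Definition~\ref{def1.2}. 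Evaluating at $t\wedge\tau_R$ and taking expectation kills the martingale term and yields
$$
\E\bigl[E(\w(t\wedge\tau_R))\bigr]\leq C(R,L)\int_{0}^{t}\E\bigl[E(\w(s\wedge\tau_R))\bigr]\,\d s,
$$
so that Gronwall's lemma forces $\w(\cdot\wedge\tau_R)\equiv\mathbf 0$; letting $R\to\infty$ gives pathwise uniqueness, and the Yamada-Watanabe theorem concludes the proof.

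The main obstacle is closing the trilinear convective term: it is precisely the Voigt regularization that secures the $\L^{\infty}(0,T;\bfV)$-regularity (strictly stronger than the $\L^{p}(0,T;\W^{1,p}_{\sigma})$-regularity available in the purely power-law stochastic framework of~\cite{Breit}), and this is what allows $\int_\1(\w\cdot\nabla)\uu_2\cdot\w$ to be absorbed into $C\|\uu_2\|_\bfV E(\w)$. The restriction $d\leq 4$ enters here through $\bfV\hookrightarrow\L^{4}$; beyond this, the technical subtleties reduce to carefully justifying It\^o's formula in the Voigt Gelfand triple (through Krylov-Rozovskii applied to $I-\kappa\Delta$) and controlling the It\^o correction produced by the non-trivial $\bfV$-inner product.
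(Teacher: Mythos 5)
Your argument is correct and coincides with the paper's: combine the martingale solution of Theorem~\ref{thm:exist} with pathwise uniqueness proved by applying It\^o's formula to $\|\w\|_{2}^{2}+\kappa\|\nabla\w\|_{2}^{2}$ in the Voigt Gelfand triple, dropping the monotone $\bfA$-contribution via \eqref{2.4}, controlling the convective term through $\bfV\hookrightarrow\L^4(\1)^d$ (valid for $2\le d\le 4$), the Lipschitz hypothesis \eqref{3.6a}, a stopping-time localization and Gronwall, and conclude by Yamada--Watanabe. The only cosmetic differences are that the paper absorbs the convective term through an exponential weight $\varphi(t)=\exp\!\big(-C_1\int_0^t\|\nabla\uu_2(s)\|_2\,\d s\big)$ inserted into the It\^o functional rather than through the $R$-dependence of the Gronwall constant — an interchangeable device — and that your stated splitting $\uu_1\otimes\w+\w\otimes\uu_2$ actually reduces the trilinear term to $\int_\1(\w\cdot\nabla)\uu_1\cdot\w\,\d\x$ rather than the $\uu_2$-version you wrote, a bookkeeping slip that is harmless since your stopping time controls both $\|\uu_1\|_{\bfV}$ and $\|\uu_2\|_{\bfV}$.
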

	The proof of the above theorems are presented in the subsequent sections.

	\section{Pressure decomposition} \label{sec4}\setcounter{equation}{0}
	In this section, our focus will be on the pressure term coming in our model and we decompose it in such a way that each part of the pressure term corresponds to one term in the equation.   The following theorem generalizes the idea of \cite[Theorem 2.6]{JW} to the stochastic case and a similar result has been established in  \cite[Section 3]{Breit} for the stochastic case. 

	\begin{theorem}\label{pr:th:1}
		Let us consider a stochastic basis $(\Omega,\mathscr{F},\{\mathscr{F}_t\}_{t\geq 0},\P)$, $\uu\in \L^2({\Omega,\mathscr{F},\P};\L^\infty(0,T;\bfV))$,  ${\bcH\in\bfL^r(\1_T)}$ for some {$1<r\leq 2$}, both adapted to $\{\mathscr{F}_t\}_{t\geq0}$. Moreover, if the initial data $\uu_0\in \L^2({\Omega,\mathscr{F},\P};\bfV)$ and $\Phi\in \L^2({\Omega,\mathscr{F},\P};\L^\infty(0,T;{\mathcal{L}_2(\bfU,\L^2(\1)^d)}))$ progressively measurable such that
		\begin{align}			
					&\int_\1\uu(t)\cdot\bphi\,\d\x+\kappa	\int_\1\nabla\uu(t):\nabla\bphi\,\d\x+ 	\int_0^t\int_\1 {\bcH}:\nabla \bphi\,\d\x\d s \nonumber \\
& =	\int_\1 \uu_0\cdot\bphi\,\d\x+\kappa	\int_\1\nabla\uu_0:\nabla\bphi\,\d\x+ 	\int_0^t\int_\1{\Phi(\uu)} \d\W(s)\cdot \bphi \,\d\x, \label{eq:0:pr:th:1}
				\end{align}
for all {$\bphi\in\bVcal$ and all $t\in[0,T]$}. Then there are functions $\pi_{\bcH}$, $\pi_\Phi$ and $\pi_h$ adapted to $\{\mathscr{F}_t\}_{t\geq 0}$ such that
		\begin{enumerate}
			\item We have $\Delta\pi_h=0$ and there holds for $m:=\min\{2,r\}$
			\begin{align*}
				& \E\bigg[\int_0^{T}\|\pi_{\bcH}(t)\|^r_{r}\d t\bigg] \leq C\E\bigg[\int_0^T\|{\bcH}(t)\|^r_{r}\d t\bigg] ,\\
				&	\E\bigg[\sup_{t\in[0,T]}\|\pi_\Phi(t)\|^2_{2}\bigg] \leq C\E\bigg[\sup_{t\in[0,T]}{\|\Phi(\uu(t))\|^2_{\mathcal{L}_2}}\bigg]  ,\\
				&	\E\bigg[\sup_{t\in[0,T]}\|\pi_h(t)\|^m_{m}\bigg] \leq C\E\bigg[1+\sup_{t\in[0,T]}\Big\{\|\uu(t)\|^2_{2}+\kappa\|\nabla\uu(t)\|^2_{2}\Big\} \\
&+\|\uu_0\|^2_{2}+\kappa\|\nabla\uu_0\|^2_{2} + {\sup_{t\in[0,T]}\|\Phi(\uu(t))\|^2_{\mathcal{L}_2}} +\int_0^T\|{\bcH}(t)\|^r_{r}\d t\bigg] .		
			\end{align*}
			\item There holds
			\begin{align*}
				&\int_\1\big(\uu(t) {+} \nabla \pi_h(t)\big)\cdot\bphi\,\d\x+\kappa\int_\1\nabla\uu(t):\nabla\bphi\,\d\x +\int_0^t\int_\1{\bcH}:\nabla \bphi\,\d\x\d s\\&\quad {-} \int_0^t\int_\1 \pi_{\bcH}\operatorname{div} \bphi\,\d\x\d s \\&= \int_\1 \uu_0\cdot\bphi\,\d\x+\kappa\int_\1\nabla\uu_0:\nabla\bphi\,\d\x {+} \int_\1\pi_\Phi(t)\operatorname{div} \bphi \,\d\x+\int_0^t\int_\1{\Phi(\uu)} \d\W(s)\cdot\bphi \,\d\x,
			\end{align*}
for all {$\bphi\in \C_{0}^\infty(\1)^d$}. Moreover $\pi_h(0)=\pi_{\bcH}(0)=\pi_\Phi(0)=0,\; \P-$a.s.
		\end{enumerate}
	\end{theorem}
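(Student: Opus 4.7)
Since \eqref{eq:0:pr:th:1} holds only for divergence-free test functions, de Rham's theorem yields, for each $(t,\omega)$, a scalar distribution $\Pi(t)$ on $\1$ such that in $\bD'(\1)$,
\begin{equation*}
\nabla\Pi(t)=(\uu_0-\uu(t))+\kappa\Delta(\uu(t)-\uu_0)+\int_0^t\div\bcH\,\d s+\int_0^t\Phi(\uu)\,\d\W(s).
\end{equation*}
The plan is to split $\Pi=\int_0^t\pi_\bcH\,\d s+\pi_\Phi+\pi_h$, defining $\pi_\bcH$ and $\pi_\Phi$ via source-specific elliptic Neumann problems so that $\pi_h$ becomes harmonic, and then to read off point~(2) by integration by parts against $\bphi\in\C_0^\infty(\1)^d$.

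\textbf{Construction of the three pressures.} For each $(t,\omega)$, define $\pi_\bcH(t)\in\L^r(\1)$ with zero mean as the unique solution of the weak Neumann problem
\begin{equation*}
\int_\1\pi_\bcH(t)\,\Delta\xi\,\d\x=\int_\1\bcH(t):\nabla^2\xi\,\d\x,\qquad\forall\,\xi\in\W^{2,r'}(\1)\text{ with }\partial_\n\xi|_{\partial\1}=0,
\end{equation*}
which weakly encodes $\Delta\pi_\bcH=\div(\div\bcH)$ together with the natural Neumann datum. Standard $\L^r$-Neumann regularity on the $\C^2$ domain $\1$ yields $\|\pi_\bcH(t)\|_r\leq C\|\bcH(t)\|_r$; raising to the power $r$, integrating in $t$, and taking expectation gives the first claimed estimate. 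Similarly, I define $\pi_\Phi(t)\in\L^2(\1)$ with zero mean via the analogous problem with vector datum $\int_0^t\Phi(\uu)\,\d\W(s)\in\L^2(\1)^d$; combining the elliptic bound with Doob's maximal inequality and It\^o's isometry produces the estimate for $\pi_\Phi$. Set $\pi_h(t):=\Pi(t)-\int_0^t\pi_\bcH(s)\,\d s-\pi_\Phi(t)$. Applying $\div$ to the identity for $\nabla\Pi$ and using $\div\uu=\div\uu_0=0$ gives $\Delta\Pi=\int_0^t\div(\div\bcH)\,\d s+\div\int_0^t\Phi\,\d\W$, which equals $\Delta\int_0^t\pi_\bcH\,\d s+\Delta\pi_\Phi$ by construction; hence $\Delta\pi_h=0$. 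Progressive measurability of every pressure is inherited from the data and the stochastic integral.

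\textbf{Verification and main obstacle.} Substituting the decomposition of $\Pi$ into the identity for $\nabla\Pi$ and integrating by parts against $\bphi\in\C_0^\infty(\1)^d$ yields point~(2) directly; the initial conditions $\pi_\bcH(0)=\pi_\Phi(0)=\pi_h(0)=0$ are immediate because the corresponding data vanish at $t=0$. The principal obstacle is the $\L^m$-estimate on $\pi_h$ with $m=\min\{2,r\}$. Writing $\pi_h=\Pi-\int_0^t\pi_\bcH\,\d s-\pi_\Phi$ reduces the matter to bounding $\|\Pi(t)\|_m$. For this I would invoke the Bogovskii operator: for any $\psi\in\L^{m'}(\1)$ of zero mean, solve $\div\bv=\psi$ with $\bv\in\W^{1,m'}_0(\1)^d$ and $\|\bv\|_{1,m'}\leq C\|\psi\|_{m'}$, and then test the identity for $\nabla\Pi$ against $\bv$ to obtain
\begin{equation*}
\Big|\int_\1\Pi(t)\,\psi\,\d\x\Big|\leq C\|\psi\|_{m'}\bigg(\|\uu(t)\|_2+\kappa\|\nabla\uu(t)\|_2+\|\uu_0\|_2+\kappa\|\nabla\uu_0\|_2+\Big\|\int_0^t\Phi\,\d\W\Big\|_2+\int_0^t\|\bcH(s)\|_r\,\d s\bigg).
\end{equation*}
Taking the supremum in $t$, raising to the $m$-th power, taking expectation, and applying Doob/Burkholder--Davis--Gundy to the stochastic integral produces the stated bound; the exponent $m=\min\{2,r\}$ emerges because $\Pi$ inherits the worst integrability between $\bcH\in\L^r$ and the $\L^2$ contributions.
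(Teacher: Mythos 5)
Your proposal is correct in its essentials but follows a genuinely different route from the paper's. The paper first produces the full pressure $\pi(t)\in\L^m(\1)$ with zero mean via de~Rham (Lemma~\ref{lemm:BP}) and the Bogovski\u{\i} argument, then invokes \cite[Corollary~2.5]{JW} to split $\pi=\pi_0+\pi_h$ as a bounded direct-sum decomposition $\L^m_0(\1)=\Delta\W_0^{2,m}(\1)\oplus\L^m_\Delta(\1)$, so that harmonicity of $\pi_h$ and the bound $\|\pi_0\|_m+\|\pi_h\|_m\le C\|\pi\|_m$ come for free. It then inserts $\bphi=\nabla\psi$, $\psi\in\W_0^{2,m'}(\1)$, to isolate the equation governing $\pi_0$ and defines $\pi_\bcH$ and $\pi_\Phi$ through \emph{Dirichlet} bi-Laplace problems in the sense of \cite[Lemma~2.4]{JW}, i.e.\ with $\pi_\bcH(t),\pi_\Phi(t)\in\Delta\W_0^{2,m}(\1)$ and the solution operator $\Delta^{-2}_{\mathcal O}$ with zero boundary data for the function and its gradient. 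You instead define $\pi_\bcH$ and $\pi_\Phi$ directly from the data via transposition against Neumann test functions $\xi\in\W^{2,r'}(\1)$ with $\partial_\n\xi|_{\partial\1}=0$, declare $\pi_h:=\Pi-\int_0^t\pi_\bcH\,\d s-\pi_\Phi$, and verify $\Delta\pi_h=0$ by applying $\div$ to the de~Rham identity. Both routes are legitimate: the paper's has the advantage that the projection estimate from \cite[Corollary~2.5]{JW} immediately yields the $\L^m$ bound on $\pi_h$ from that on $\pi$, whereas your construction only needs second-order (Neumann) elliptic regularity on a $\C^2$ domain, not the bi-Laplace machinery, at the cost of having to supply the harmonicity and the $\pi_h$ bound by hand. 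A few spots you should spell out to be complete: (i) the $\W^{2,r'}$ estimate for the Neumann Laplacian on $\C^2$ domains, which is the actual ingredient behind your claimed ``standard $\L^r$-Neumann regularity''; (ii) that $\partial_\n\xi=0$ (rather than $\xi=0$) suffices to carry the distributional identity $\Delta\pi_\bcH=\div\div\bcH$, since $\C_0^\infty(\1)$ test functions satisfy it trivially; and (iii) that the bound on $\pi_h$ requires, beyond the Bogovski\u{\i} estimate for $\Pi$, the $\L^m$ bounds on $\int_0^t\pi_\bcH\,\d s$ and $\pi_\Phi$ obtained in the first two parts of the estimate, combined via the triangle inequality and Hölder in time to match the stated right-hand side.
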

	\begin{remark}
		If we place all the decomposed terms of pressure together, we have
		\begin{align*}
			\pi(t)=\pi_h(t)+\pi_\Phi(t)+\int_{0}^{t}\pi_{\bcH}(s) \d s,
		\end{align*}then there holds $\pi \in \L^m(\Omega,{\mathscr{F},\P};\L^\infty(0,T;\L^m(\1)))$.
	\end{remark}

Before proceeding with the proof of Theorem~\ref{pr:th:1}, let us recall some important auxiliary results.
The following lemma is a variant of the well-known de~Rham's theorem.

\begin{lemma}\label{lemm:BP}
Assume $1<m<\infty$.
\begin{enumerate}
  \item For each $\vv^\ast\in{\W^{-1,m}(\1)^d}$ such that
\begin{equation*}
\big< \vv^{\ast},\vv\big>=0, \quad\forall\ \vv\in
\mathbf{V}_{m'},
\end{equation*}
 there exists a unique $\pi\in \mathrm{L}^{m}(\1)$, with $\int_{\1}\pi\,\,\d\x=0$, such that
\begin{equation*}
\left\langle \vv^\ast,\vv\right\rangle =
\int_{\1}\pi \operatorname{div}\vv\,\,\d\x,\quad\forall\,\vv\in\W_{0}^{1,m'}(\1)^d.
\end{equation*}
Moreover, there exists a positive constant $C$ such that
\begin{equation*}
\|\pi\|_{m}\leq C \|\vv^\ast\|_{-1,m}.
\end{equation*}
\item For each
$\xi\in\mathrm{L}^{m}(\Omega)$ with $\int_{\Omega}\xi\,\,\d\x=0$, there exists, at least, one solution $\w\in\W^{1,m}_0(\1)^d$ to the problem
\begin{equation*}
\operatorname{div} \w=\xi\quad\mbox{in}\quad \1,
\end{equation*}
and such that for some positive constant $C$,
\begin{equation*}
\|\nabla\w\|_{m}\leq C\|\xi\|_{m}.
\end{equation*}
\end{enumerate}
\end{lemma}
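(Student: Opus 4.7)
The plan is to establish part~(2) first by a direct construction of a bounded right inverse to the divergence operator (the Bogovskii operator), and then to deduce part~(1) from it by a short functional-analytic argument based on the closed range theorem. The two statements are in fact dual to each other, with the nontrivial analytic content concentrated in part~(2).

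For part~(2), recall that a bounded $\C^2$ domain can be written as a finite union of bounded Lipschitz subdomains, each of which is star-shaped with respect to some interior ball. On a subdomain $\1'$ that is star-shaped with respect to a ball $B\subset\1'$, the classical Bogovskii formula produces an operator $\mathcal{B}$ with $\mathcal{B}\xi\in\W^{1,m}_0(\1')^d$, $\operatorname{div}\mathcal{B}\xi=\xi$ and $\|\nabla\mathcal{B}\xi\|_m\leq C\|\xi\|_m$ for every $1<m<\infty$; the norm estimate follows from singular-integral theory, since the kernel of $\nabla\mathcal{B}$ is of Calder\'{o}n--Zygmund type of order $-d$. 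To treat a general bounded $\C^2$ domain $\1$, one takes a finite open cover of $\1$ by star-shaped subdomains $\{\1_j\}$, a subordinate partition of unity $\{\chi_j\}$, and decomposes $\xi=\sum_j(\chi_j\xi-c_j)$ with constants $c_j$ chosen so that each summand has zero mean on $\1_j$. Applying the local Bogovskii operators and correcting the resulting overlap terms (again by local Bogovskii inverses) yields a global $\w\in\W^{1,m}_0(\1)^d$ with $\operatorname{div}\w=\xi$ and $\|\nabla\w\|_m\leq C\|\xi\|_m$.

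For part~(1), interpret the map
\[
\operatorname{div}:\W^{1,m'}_0(\1)^d\longrightarrow \mathrm{L}^{m'}_0(\1):=\bigg\{f\in\mathrm{L}^{m'}(\1):\int_\1 f\,\mathrm{d}\x=0\bigg\}
\]
as a bounded linear operator between Banach spaces; its surjectivity is exactly the content of part~(2). Its Banach-space adjoint, via the natural identification $(\mathrm{L}^{m'}_0(\1))^{\ast}\simeq \mathrm{L}^{m}(\1)/\mathbb{R}\simeq \mathrm{L}^{m}_0(\1)$ (mean-zero representatives), is the operator $-\nabla:\mathrm{L}^{m}_0(\1)\longrightarrow \W^{-1,m}(\1)^d$. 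By the closed range theorem, surjectivity of $\operatorname{div}$ is equivalent to $-\nabla$ being injective with closed range equal to $(\ker\operatorname{div})^{\perp}$. Since $\ker\operatorname{div}=\mathbf{V}_{m'}$ (a classical density result for bounded Lipschitz domains: smooth compactly supported divergence-free fields are dense among divergence-free elements of $\W^{1,m'}_0(\1)^d$), any $\vv^{\ast}\in\W^{-1,m}(\1)^d$ that vanishes on $\mathbf{V}_{m'}$ admits a \emph{unique} $\pi\in\mathrm{L}^{m}_0(\1)$ with $-\nabla\pi=\vv^{\ast}$, which unfolds into $\langle\vv^{\ast},\vv\rangle=\int_\1\pi\,\operatorname{div}\vv\,\mathrm{d}\x$ for every $\vv\in\W^{1,m'}_0(\1)^d$. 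The open mapping theorem applied to $-\nabla$ restricted to its closed range produces the bound $\|\pi\|_m\leq C\|\vv^{\ast}\|_{-1,m}$.

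The main obstacle lies in part~(2): the singular-integral estimate $\|\nabla\mathcal{B}\xi\|_m\leq C\|\xi\|_m$ on star-shaped subdomains, and the partition-of-unity/gluing argument needed to pass to general bounded $\C^2$ domains. Once part~(2) is in hand, part~(1) follows cleanly from the closed range theorem together with the density characterization $\ker(\operatorname{div}|_{\W^{1,m'}_0(\1)^d})=\mathbf{V}_{m'}$; all the norm bounds in part~(1) are then automatic consequences of the bounded inverse theorem.
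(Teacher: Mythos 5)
Your proposal is correct and follows essentially the same route as the paper, whose proof of this lemma consists of citing Bogovski\u{\i}~\cite{MEB} and \cite[Theorems~III.3.1 and III.5.3]{Galdi:2011}: part~(2) via the Bogovski\u{\i} operator on star-shaped subdomains with Calder\'on--Zygmund estimates and a partition-of-unity gluing, and part~(1) via the closed-range/duality argument combined with the density of smooth solenoidal fields in the kernel of the divergence. The only loose point is your decomposition $\xi=\sum_j(\chi_j\xi-c_j)$, which as written neither sums back to $\xi$ nor keeps the summands supported in the subdomains; the standard remedy redistributes the means through the overlaps (cf.\ \cite[Lemma~III.3.2]{Galdi:2011}), which is what your subsequent ``correction'' step implicitly intends.
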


\begin{proof}
The proof is due to Bogovski\v{\i}~\cite{MEB} (see also \cite[Theorems~III.3.1 and III.5.3]{Galdi:2011} and \cite[Proposition I.1.1]{Te}).
\end{proof}

	\begin{proof}
\textbf{Step 1:}
Define a bilinear form $\calS$ with the domain $\cD(\calS)\subseteq \L^2(\1)^d$ by setting $\cD(\calS)=\W_0^{1,2}(\1)^d$ and
\begin{equation*}
\calS(\uu,\vv)=\int_{\1}\uu\cdot\vv\,\,\d\x + \kappa \int_{\1}\nabla\uu:\nabla\vv\,\,\d\x.
\end{equation*}
Since $\W_0^{1,2}(\1)^d$ is complete with respect to the norm
\begin{equation*}
\calS(\uu,\uu)^{\frac{1}{2}}=\left(\|\uu\|_{2}^2+\kappa\|\nabla\uu\|^2_{2}\right)^{\frac{1}{2}},
\end{equation*}
the form $\calS$ is closed.
As $\kappa>0$, it is easy to see that $\calS$ is positive and symmetric.
Therefore by \cite[Lemma~II.3.2.1]{HS}, there exists a uniquely determined positive self-adjoint operator $\calA:\cD(\calA)\longrightarrow \L^2(\1)^d$, with dense domain
$\cD(\calA)\subseteq\W_0^{1,2}(\1)^d$, such that
\begin{equation*}
\int_{\1}\calA(\uu)\cdot\vv\,\,\d\x=\calS(\uu,\vv)=\int_{\1}\uu\cdot\vv\,\,\d\x + \kappa \int_{\1}\nabla\uu:\nabla\vv\,\,\d\x.
\end{equation*}
Setting $\vv\in \C_0^\infty(\1)^d$, we can see that $\calA(\uu)=(\bfI-\kappa\Delta)(\uu)=(\bfI-\kappa\operatorname{div}\nabla)(\uu)$ holds in the distribution sense, and so we can set
$\calA=\bfI-\kappa\Delta$, where $\bfI$ denotes the identity operator.
In this way, the operator $\bfI-\kappa\Delta: \cD(\bfI-\kappa\Delta)\longrightarrow \L^2(\1)^d$ is defined by
\begin{equation}\label{w:I-kD}
\int_{\1}(\bfI-\kappa\Delta)(\uu)\cdot\vv\,\,\d\x=\calS(\uu,\vv)=\int_{\1}\uu\cdot\vv\,\,\d\x + \kappa \int_{\1}\nabla\uu:\nabla\vv\,\,\d\x,
\end{equation}
for $\uu\in\cD(\bfI-\kappa\Delta)$, $\vv\in \W_0^{1,2}(\1)^d$, and
\begin{equation*}
\cD(\bfI-\kappa\Delta)=\left\{\uu\in \W_0^{1,2}(\1)^d: \vv\longmapsto \calS(\uu,\vv)\ \mbox{is continuous in the $\L^2$-norm}\right\}.
\end{equation*}
Given the above, and observing that $m\leq \min\{r,2\}$, we can write \eqref{eq:0:pr:th:1} in the following form:
		\begin{align}			
					& \int_\1(\bfI-\kappa\Delta)(\uu(t))\cdot\bphi\,\d\x + 	\int_0^t\int_\1 {\bcH}:\nabla \bphi\,\d\x\d s \nonumber \\
& =	\int_\1 (\bfI-\kappa\Delta)(\uu_0)\cdot\bphi\,\d\x + \int_0^t\int_\1\Phi(\uu) \d\W(s)\cdot \bphi \,\d\x, \label{eq:01:pr:th:1}
				\end{align}
for all {$\bphi\in \W_0^{1,m'}(\1)^d$ with $\operatorname{div}\bphi=0$ in $\1$, and all $t\in[0,T]$}.

\vspace{2mm}
\noindent
\textbf{Step 2:}
Using \eqref{eq:01:pr:th:1} and Lemma~\ref{lemm:BP}-(1), we can argue as in the proof of \cite[Theorem~2.6]{JW} to prove the existence of a unique pressure $\pi\in \C_{\rw}([0,T];\L^m(\1))$, with
\begin{equation}\label{0:av:pr}
\int_{\1}\pi(t)\,\d\x=0,\qquad \forall\ t\in[0,T],
\end{equation}
such that

		\begin{align}\label{PM}\nonumber
				&	\int_\1\uu(t)\cdot{(\bfI-\kappa\Delta)(\bphi)}\,\d\x +\int_0^t\int_\1 {\bcH}:\nabla \bphi \,\d\x\d s \\
&=\int_\1 \uu_0\cdot{(\bfI-\kappa\Delta)(\bphi)}\,\d\x {+} \int_\1 \pi(t)\operatorname{div} \bphi \,\d\x +\int_0^t\int_\1 {\Phi(\uu)} \d\W(s)\cdot\bphi \,\d\x,
				\end{align}
for all $\bphi\in \W_0^{1,m'}(\1)^d$ (note that $m=\min\{2,r\}$), and all $t\in[0,T]$.

Now, we will prove that
		\begin{align}\label{4.1}
			\pi\in \L^m(\Omega,{\mathscr{F},\P};\L^\infty(0,T;\L^m(\1))).
		\end{align}
From \eqref{PM}, we can say that $\pi$ is a progressively measurable process, since all other terms in this equation are.
{By Lemma~\ref{lemm:BP}-(2), with $m'$ in the place of $m$, given $\psi\in\L^{m'}(\1),$ there exists $\bphi\in \W_0^{1,m'}(\1)^d$ such that
\begin{equation}\label{eq:Bog}
\operatorname{div}\bphi = \psi -\psi_{\1},\qquad \psi_{\1}:=\frac{1}{|\1|}\int_{\1}\psi\,\d\x.
\end{equation}
Combining \eqref{0:av:pr} and \eqref{eq:Bog} with \eqref{PM}, one has}
		\begin{align*}
			\int_\1\pi(t)\psi\,\d\x &= \int_\1\pi(t)(\psi-\psi_{\1})\,\d\x \\
&=\int_\1(\uu(t)-\uu_0)\cdot {(\bfI-\kappa\Delta)\cB(\psi)}\,\d\x {+}\int_0^t\int_\1 {\bcH}:\nabla  {\cB(\psi)}\,\d\x \d s\\
&\quad {-} \int_0^t\int_\1{\Phi(\uu)}\d\W(s)\cdot {\cB(\psi)}\,\d\x,
				\end{align*}
for all $\psi\in\L^{m'}(\1)$, and
where $\cB:\L^{m'}(\1)\longrightarrow  \W_0^{1,m'}(\1)^d$ is the Bogovski\u{\i} operator (see \cite[Appendix 4.2]{MP}).
Denoting by $\cB^\ast:\W_0^{1,m'}(\1)^d\longrightarrow \L^{m'}(\1),$ the Bogovski\u{\i} adjoint operator, this implies
		\begin{align}\label{ref:Pi(t)}
			\pi(t)=\big((\bfI-\kappa\Delta)\cB\big)^\ast(\uu(t)-\uu_0)+\int_{0}^{t}(\nabla\cB)^*{\bcH} \d s-\int_{0}^{t}\cB^*\Phi(\uu) \d\W(s),
		\end{align}
with respect to the $\L^2(\1)$--inner product.
From \eqref{ref:Pi(t)}, one immediately has $\pi(0)=0$.
Averaging \eqref{ref:Pi(t)} in the $\L^m(\1)$--norm, we get
\begin{align}\label{4.2}\nonumber
&	
\E\bigg[\sup_{t\in[0,T]}\|\pi(t)\|^m_{m}\bigg]  \\&=
\nonumber \E \bigg[\sup_{t\in[0,T]}
\bigg\|\big((\bfI-\kappa\Delta)\cB\big)^\ast(\uu(t)-\uu_0)-\int_{0}^{t}(\nabla\cB)^*{\bcH}(\uu(s))\, \d s +
\int_{0}^{t}\cB^*\Phi (\uu(s))\d\W(s)\bigg\|_{m}^m\bigg]  \\&\nonumber\leq

 C\bigg( 1+
\E \bigg[\sup_{t\in[0,T]}\|\big((\bfI-\kappa\Delta)\cB\big)^\ast(\uu(t)-\uu_0)\|_{2}^2\bigg] + \E \bigg[\int_0^T\|(\nabla\cB)^*{\bcH}(\uu(t))\|_{m}^m \d t\bigg] \\
&\qquad + \E \bigg[\sup_{t\in[0,T]}\bigg\|\int_{0}^{t}\cB^*\Phi(\uu(s)) \d\W(s)\bigg\|_{2}^2\bigg]\bigg)=:C\bigg(1+ \sum_{i=1}^{3} I_i\bigg),
\end{align}
where we have used the Minkowski, Hölder and Young inequalities, together with the fact that $m\leq 2$.
Using the continuity of the operator $\cB^\ast$ from $\L^2(\1)^d$ to $\L^2(\1)$, observing first that $(\bfI-\kappa\Delta)$ is self-adjoint and therefore $\big((\bfI-\kappa\Delta)\cB\big)^\ast=\cB^\ast(\bfI-\kappa\Delta)$, and still using \eqref{w:I-kD} and Minkowski's inequality, we have
		\begin{align*}
	I_1& \leq C\E \bigg[\sup_{t\in[0,T]} \|(\bfI-\kappa\Delta)(\uu(t)-\uu_0)\|_{2}^2\bigg] \\
& = C \E\bigg[\sup_{t\in[0,T]} \|\uu(t)-\uu_0\|_{2}^2+\kappa\|\nabla(\uu(t)-\uu_0)\|^2_{2}\bigg] \\
& \leq  C \E\bigg[\sup_{t\in[0,T]}\Big\{\|\uu(t)\|_{2}^2 + \kappa\sup_{t\in[0,T]}\|\nabla\uu(t)\|^2_{2} \Big\}  +\|\uu_0\|_{2}^2  + \kappa\|\nabla\uu_0\|^2_{2}\bigg].
		\end{align*}
Since $\bcH\in \bfL^r(\1_T)$, $m=\min\{2,r\}$ and $1<r\leq 2$, and still using the fact that  $(\nabla\cB)^*$ is a continuous operator from $\L^{r}(\1)^{d\times d}$ to $\L^{r}(\1)$,  a similar calculation yields
\begin{align*}
	I_2 \leq C \E\bigg[1+\int_0^T\|{\bcH}(t)\|_{r}^r \d t \bigg].
\end{align*}
The  stochastic integral term  $I_3$ appearing in \eqref{4.2} can be handled with the help of Burkholder-Davis-Gundy (BDG) inequality, {using also here the continuity of
$\cB^\ast$ from $\L^2(\1)^d$ to $\L^2(\1)$},
\begin{align*}
	I_3\leq  C\E\bigg[\int_0^T\|\Phi(\uu(t))\|^2_{\mathcal{L}_2}\d t\bigg].
\end{align*}
Combining the above estimates in \eqref{4.2}, we obtain
	\begin{align}\label{4.002}\nonumber
 \E\bigg[\sup_{t\in[0,T]}\|\pi(t)\|_{m}^m\bigg] &\leq  \E\bigg[1+\sup_{t\in[0,T]}\Big\{\|\uu(t)\|^2_{2}+\kappa\|\nabla\uu(t)\|^2_{2}\Big\}+\|\uu_0\|_{2}^2+\kappa\|\nabla\uu_0\|^2_{2} \\
&\qquad  +\int_0^T\|{\bcH}(t)\|_{r}^r\d t+\int_0^T\|\Phi(\uu(t))\|^2_{\mathcal{L}_2}\d t \bigg].
\end{align}
On the other hand, by \cite[Corollary~2.5]{JW}, there exist unique functions
\begin{alignat*}{2}
& \pi_0\in\C_{\rw}([0,T];\Delta\W_0^{2,m}(\1)),\qquad \Delta\W_0^{2,m}(\1):=\left\{\Delta v: v\in \W_0^{2,m}(\1)\right\}, && \\
& \pi_h\in \C_{\rw}([0,T];\L^m_{\Delta}(\1)),\qquad \L^m_{\Delta}(\1):=\left\{v\in \L^m(\1): \Delta v=0,\ \ \int_{\1}v\,\d \x=0\right\}, &&
\end{alignat*}
such that
\begin{alignat}{2}
& \pi:=\pi_0+\pi_h\qquad \mbox{in}\ \ \1_T, \nonumber \\
& \sup_{t\in[0,T]}\|\pi_0(t)\|_{m}+ \sup_{t\in[0,T]}\|\pi_h(t)\|_{m} \leq C\sup_{t\in[0,T]}\|\pi(t)\|_{m}, \label{est:po:ph}
\end{alignat}
for some positive constant $C$. Therefore, we can decompose the pressure term pointwise  on $\1_T$ as follows:
		\begin{align*}
			\pi:=\pi_0+\pi_h,\qquad
			\pi_0:=\Delta\Delta_\1^{-2}\Delta\pi,\qquad \pi_h:=\pi-\pi_0,
		\end{align*}
where $\Delta_\1^{-2}$
 denotes the solution operator to the bi-Laplace equation with respect to zero boundary values for the function and gradient.
{As a consequence of \eqref{est:po:ph}, inequality \eqref{4.002} holds true for $\pi_0$ and $\pi_h$ as well.
Into \eqref{PM}, replacing $\pi$ by $\pi_0+\pi_h$, inserting $\bphi=\nabla\psi$, $\psi\in\W_0^{2,m'}(\1)$, and observing that $\operatorname{div}\uu =0$, $\operatorname{div}\uu_0 =0$ and $\Delta\pi_h=0$, one has
		\begin{align}\label{4.3}
		\int_\1\pi_0(t)\Delta \psi\,\d\x  = \int_0^t\int_\1 {\bcH}:\nabla^2\psi\,\d\x\d s-\int_0^t\int_\1\Phi(\uu) \d\W(s)\cdot\nabla\psi\,\d\x,
		\end{align}
for all $\psi\in\W_0^{2,m'}(\1)$. From \eqref{4.3}, it is clear that $\pi_0(0)=0$. And once that $\pi(0)=0$, we also have $\pi_h(0)=0$. As $\operatorname{div}^2 {\bcH}=\operatorname{div}(\operatorname{div} {\bcH})\in \W^{-2,m}(\1)$, by \cite[Lemma 2.4]{JW},
there exists a function $\pi_{\bcH}(t)\in \Delta \W_0^{2,m}(\1)$ such that
		\begin{align*}
			\int_\1\pi_{\bcH}(t)\Delta\psi \,\d\x =\int_\1{\bcH}(t):\nabla^2\psi\,\d\x,
		\end{align*}
for all $\psi\in \C_0^\infty(\1)$.
The measurability of $\pi_{\bcH}$} follows from the measurability of the right hand side.
{On account of the solvability of the bi-Laplace equation, one has
		\begin{align*}
			\|\pi_{\bcH}(t)\|_{m}^m \leq C \|{\bcH}(t)\|_{m}^m \qquad \P\otimes\lambda-\text{a.e.}
		\end{align*}
for some positive constant $C$, where $\lambda$ is the Lebesgue measure on $(0,T)$.
For more details, see \cite{RM}, or  \cite[Lemma 2.1]{JW} and \cite[Lemma 2.2]{DBLDSS}.
This in turn gives
		\begin{align*}
			\int_{\Omega\times\1_T}\|\pi_{\bcH}(t)\|_{m}^m\d t\d\P \leq C\int_{\Omega\times\1_T}\|{\bcH}(t)\|_{m}^m\d t\d\P.
		\end{align*}
Next, we define
$$\pi_\Phi(t):=\pi_0(t) + \int_{0}^{t}\pi_{\bcH}(s) \d s,$$
as the unique solution to the following integral equation
		\begin{align}\label{4.4}
		\int_\1 \pi_\Phi(t)\Delta\psi\,\d\x=\int_0^t\int_\1\Phi(\uu(s)) \d\W(s)\cdot\nabla\psi \,\d\x,\qquad \ \psi\in \C_0^\infty(\1).
		\end{align}
Since $\pi_\Phi(t)\in \Delta \W_0^{2,m}(\1)$, \eqref{4.4} can be written as follows
		\begin{align}\label{ref:A:1}
		\int_\1\pi_\Phi(t)\psi\,\d\x=\int_0^t\int_\1\Phi(\uu) \d\W(s)\cdot\nabla(\Delta^{-2}_{\mathcal{O}}\Delta\psi)\,\d\x,\qquad \psi\in \C_0^\infty(\1).
		\end{align}
From here, one immediately has $\pi_\Phi(0)=0$. Defining the operator $\bD:=\nabla\Delta_\1^{-2}\Delta:\W_0^{2,m}(\1)\longrightarrow\W_0^{1,m}(\1)^d$, we can write \eqref{4.4} as
		\begin{align*}
		\int_\1\pi_\Phi(t)\psi\,\d\x=\int_0^t\int_\1\bD^*\Phi(\uu) \d\W(s)\psi\,\d\x,\qquad \psi\in \C_0^\infty(\1),
		\end{align*}
and hence
$$\pi_\Phi(t)=\int_{0}^{t}\bD^*\Phi(\uu(s)) \d\W(s),\qquad \mbox{$\P\times \lambda^{d+1}$-a.e.},$$ where $\lambda^{d+1}$ is the Lebesgue measure on $\mathcal{O}\times(0,T)$.
Using the BDG inequality, together with the fact that $\bD^*$ is a continuous operator from $\L^2(\1)^d$ to $\L^2(\1)$, we can show that
		\begin{align*}
\E\bigg[\sup_{t\in[0,T]}\|\pi_\Phi(t)\|_{2}^2\bigg]& = \E\bigg[\sup_{t\in[0,T]}\bigg\|\int_0^t\bD^*\Phi(\uu(s))\d\W( s)\bigg\|_{2}^2\bigg] \\
& \leq
C\E\bigg[\int_0^T\|\Phi(\uu(t))\|_{\mathcal{L}_2}^2\d t\bigg].
\end{align*}
Finally, we can see that
$$\bar{\pi}_0(t):=\pi_\Phi(t)+\int_{0}^{t}\pi_{\bcH}(s) \d s$$
solves \eqref{4.3}, and $\bar{\pi}_0(t)\in \Delta \W_0^{2,m}(\1)$.
This implies}
		\begin{align*}
			\pi_0(t)=\pi_\Phi(t)+\int_{0}^{t}\pi_{\bcH} (s)\,\d s,
		\end{align*}
		and hence we obtained the required result.
	\end{proof}

For the following result, it is worth keeping in mind that we are assuming a smooth boundary $\partial\1$, in fact of class $\C^2$. See \cite[Section 2.2]{PhCGS1} and \cite[Section 2.2]{PhCGS2} in the case of non-smooth boundary $\partial\1$.

	\begin{corollary}\label{p:dc:cor:1}
Assume the conditions of Theorem~\ref{pr:th:1} are satisfied.
Then there exists $\Phi_\pi\in \L^2(\Omega,\mathscr{F},\P;\L^\infty(0,T;\mathcal{L}_2(\bfU,\L^2(\1))))$ progressively measurable such that
		\begin{align}\label{p:dc:cor:1:eq}
		\int_\1\pi_\Phi(t)\operatorname{div} \bphi \,\d\x =	\int_0^t\int_\1\Phi_\pi \d\W(s)\cdot\bphi \,\d\x, \quad  \forall\ \bphi \in \C_0^\infty(\1).
		\end{align}
and $\|\Phi_\pi \bfe_j\|_{2}\leq C({\1})\|\Phi\bfe_j\|_{2}$, for all $j$, that is, we have $\P\otimes \lambda$, a.e.
		\begin{align*}
			\|\Phi_\pi\|_{\mathcal{L}_2} \leq C({\1}) \|\Phi\|_{\mathcal{L}_2}.
		\end{align*}
Furthermore, if $\Phi$ satisfies \eqref{3.6a}, then there holds
		\begin{align*}
			\|\Phi_\pi (\uu_1)-\Phi_\pi (\uu_2)\|_{\mathcal{L}_2}  \leq C(L,{\1}) \|\uu_1-\uu_2\|_{{2}},
		\end{align*}for all $\uu_1,\uu_2\in \L^2(\1)^d$.
	\end{corollary}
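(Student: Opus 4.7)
The plan is to define $\Phi_\pi$ directly from the stochastic-integral representation of $\pi_\Phi$ obtained in the proof of Theorem \ref{pr:th:1}, and then to verify the integral identity by stochastic Fubini and one integration by parts, the Hilbert-Schmidt bound by elliptic regularity for the biharmonic operator on the $\C^2$-smooth domain $\1$, and the Lipschitz estimate from linearity of the construction combined with assumption \eqref{3.6a}.

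Starting from the representation obtained in the proof of Theorem~\ref{pr:th:1},
\[
\pi_\Phi(t)=\int_0^t\bD^{\ast}\Phi(\uu(s))\,\d\W(s),\qquad \bD^{\ast}=-\Delta\Delta_\1^{-2}\operatorname{div},
\]
the stochastic Fubini theorem yields, for every $\bphi\in\C_0^\infty(\1)^d$,
\[
\int_\1\pi_\Phi(t)\operatorname{div}\bphi\,\d\x=\sum_{k\in\N}\int_0^t\int_\1\bD^{\ast}\!\left(\Phi(\uu(s))\bfe_k\right)\operatorname{div}\bphi\,\d\x\,\d\beta_k(s).
\]
Setting $u_k:=\Delta_\1^{-2}\!\left(\operatorname{div}(\Phi\bfe_k)\right)\in\W_0^{2,2}(\1)$, the inner integrand becomes $(-\Delta u_k)\operatorname{div}\bphi$, and, since $\bphi$ has compact support in $\1$, a single integration by parts in space transfers the divergence onto $-\Delta u_k$:
\[
\int_\1(-\Delta u_k)\operatorname{div}\bphi\,\d\x=\int_\1\nabla\Delta u_k\cdot\bphi\,\d\x.
\]
This motivates the definition
\[
\Phi_\pi(\uu)\bfe_k:=\nabla\Delta\Delta_\1^{-2}\!\left(\operatorname{div}(\Phi(\uu)\bfe_k)\right),\qquad k\in\N,
\]
after which \eqref{p:dc:cor:1:eq} is immediate, and the progressive measurability of $\Phi_\pi$ is inherited from that of $\Phi$ via the linearity of the defining operator.

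The main step is to verify that the linear map $\bfF\mapsto \nabla\Delta\Delta_\1^{-2}(\operatorname{div}\bfF)$ is bounded from $\L^2(\1)^d$ into $\L^2(\1)^d$, with constant depending only on $\1$. Given $\bfF\in\L^2(\1)^d$, the function $u=\Delta_\1^{-2}(\operatorname{div}\bfF)\in\W_0^{2,2}(\1)$ is the unique Lax--Milgram solution of $\Delta^2 u=\operatorname{div}\bfF$ with $u=\partial_n u=0$ on $\partial\1$; combining this with elliptic (biharmonic) regularity in the $\C^2$-smooth domain $\1$ gives $u\in\W^{3,2}(\1)$ together with $\|u\|_{\W^{3,2}}\leq C(\1)\|\operatorname{div}\bfF\|_{\W^{-1,2}}\leq C(\1)\|\bfF\|_{2}$, so that $\nabla\Delta u\in\L^2(\1)^d$ with the required bound. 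Applied with $\bfF=\Phi\bfe_j$ this yields $\|\Phi_\pi\bfe_j\|_{2}\leq C(\1)\|\Phi\bfe_j\|_{2}$, and squaring and summing over $j$ gives the Hilbert-Schmidt estimate $\|\Phi_\pi\|_{\mathcal{L}_2}\leq C(\1)\|\Phi\|_{\mathcal{L}_2}$. This elliptic regularity step is the chief technical obstacle. The Lipschitz estimate is then immediate from the linearity of the construction, since $\Phi_\pi(\uu_1)\bfe_k-\Phi_\pi(\uu_2)\bfe_k$ equals $\nabla\Delta\Delta_\1^{-2}(\operatorname{div}(\phi_k(\uu_1)-\phi_k(\uu_2)))$, so the $\L^2$-continuity above, combined with the pointwise inequality $\sum_{k\in\N}|\phi_k(\bxi)-\phi_k(\boldsymbol{\zeta})|^2\leq\bigl(\sum_{k\in\N}|\phi_k(\bxi)-\phi_k(\boldsymbol{\zeta})|\bigr)^2\leq L^2|\bxi-\boldsymbol{\zeta}|^2$ from \eqref{3.6a}, integration over $\1$, and Fubini, produces the desired $\|\Phi_\pi(\uu_1)-\Phi_\pi(\uu_2)\|_{\mathcal{L}_2}\leq C(L,\1)\|\uu_1-\uu_2\|_{2}$.
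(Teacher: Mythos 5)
Your proposal is correct and follows essentially the same route as the paper: start from the representation $\pi_\Phi(t)=\int_0^t\bD^{\ast}\Phi(\uu(s))\,\d\W(s)$, transfer the operator onto $\Phi$ by duality/integration by parts to obtain $\Phi_\pi=\nabla\Delta\Delta_\1^{-2}\operatorname{div}\Phi$, and then invoke biharmonic elliptic regularity on the $\C^2$-smooth domain to get the $\L^2\to\L^2$ boundedness of this operator, from which both the Hilbert–Schmidt and Lipschitz estimates follow. If anything, your treatment of the Lipschitz step is slightly more careful than the paper's, since you explicitly pass from the $\ell^1$-bound in \eqref{3.6a} to the $\ell^2$-bound needed to sum the per-mode estimates into a Hilbert–Schmidt norm.
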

	\begin{proof}
		From {\eqref{ref:A:1} of the proof of Theorem \ref{pr:th:1}, we have for any $\bphi \in {\C_0^\infty(\1)^d}$}
		\begin{align*}
				\int_\1\pi_\Phi(t)\operatorname{div} \bphi \,\d\x&= 	\int_0^t\int_\1\Phi (\uu)\d\W(s)\cdot\nabla (\Delta^{-2}_{\mathcal{O}}\Delta\operatorname{div} \bphi)\,\d\x \\& =\sum_j 	\int_0^t\int_\1\Phi(\uu)\bfe_j \d\beta_j(s)\cdot \nabla (\Delta^{-2}_{\mathcal{O}}\Delta\operatorname{div} \bphi)\,\d\x \\& =\sum_j	\int_0^t\int_\1 \nabla \Delta\Delta^{-2}_{\mathcal{O}}\operatorname{div} \Phi(\uu)\bfe_j \d\beta_j(s)\cdot \bphi\,\d\x\\& =	\int_0^t\int_\1 \nabla \Delta\Delta^{-2}_{\mathcal{O}}\operatorname{div} \Phi (\uu)\d\W(s)\cdot \bphi \,\d\x,
			\end{align*}
{which proves \eqref{p:dc:cor:1:eq}} by setting $\Phi_\pi=\nabla \Delta\Delta^{-2}_{\mathcal{O}}\operatorname{div} \Phi$.
For  {the} second part, we use {the} local regularity theory for the bi-Laplace equation in the  following manner:
		\begin{align*}
		\big|(\Phi_\pi\bfe_k,\bpsi)\big| = &
 \big| (\nabla\Delta\Delta^{-2}_{\mathcal{O}}\operatorname{div} \Phi \bfe_k,\bpsi)\big|\leq  \|\nabla\Delta\Delta^{-2}_{\mathcal{O}}\operatorname{div} \Phi \bfe_k\|_{{2}}\|\bpsi\|_{{2}} \\
 \leq &
 C({\1}) \|\Delta\Delta^{-2}_{\mathcal{O}}\operatorname{div} \Phi \bfe_k\|_{-1,2}\|\bpsi\|_{{2}}\leq C({\1}) \|\Delta^{-2}_{\mathcal{O}}\operatorname{div} \Phi \bfe_k\|_{3,2}\|\bpsi\|_{{2}} \\
\leq &  C({\1}) \|\operatorname{div} \Phi \bfe_k\|_{-1,2}\|\bpsi\|_{{2}}\leq C({\1}) \|\Phi \bfe_k\|_{{2}}\|\bpsi\|_{{2}}
\leq C({\1}) \|\Phi \bfe_k\|_{{2}}\|\bpsi\|_{{2}},
		\end{align*}
for $\bpsi\in \L^2(\1)^d$. Similarly, we can conclude the final part as follows:
	\begin{align*}
		\|(\Phi_\pi(\uu_1)-\Phi_\pi(\uu_2))\bfe_k\|_{{2}} & =	\|\nabla\Delta\Delta^{-2}_{\mathcal{O}}\operatorname{div} (\Phi(\uu_1)-\Phi(\uu_2))\bfe_k\|_{{2}} \\
& \leq C({\1})\|(\Phi(\uu_1)-\Phi(\uu_2))\bfe_k\|_{{2}} = C({\1})\|\phi_k(\uu_1)-\phi_k(\uu_2)\|_{{2}} \\
& \leq C({L,\1})\|\uu_1-\uu_2\|_{{2}},
	\end{align*}where we have used \eqref{3.6a}.
	\end{proof}

	\begin{corollary}\label{p:dc:cor:2}
		Let the {conditions of Theorem \ref{pr:th:1} be satisfied}. Then, for all $\gamma\in[1,\infty)$
		\begin{align*}
 \E\bigg[\sup_{t\in[0,T]}\|\pi_h(t)\|_{{m}}^m\bigg]^{\gamma}
&\leq C\E\bigg[\sup_{t\in[0,T]}\big\{\|\uu(t)\|^2_{2}+\kappa{\|\nabla\uu(t)\|^2_{2}} \big\}+\sup_{t\in[0,T]}\|\Phi(t)\|^2_{\mathcal{L}_2}\bigg]^\gamma \\
&\quad  +C\E\bigg[1+\|\uu_0\|^2_{2}+\kappa{\|\nabla\uu_0\|^2_{2}}+\int_0^T{\|\bcH(t)\|^r_{r}}\d t\bigg]^\gamma,
			\end{align*}provided the right hand side of above inequality is finite.
	\end{corollary}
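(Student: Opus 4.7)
The plan is to revisit the proof of Theorem~\ref{pr:th:1} and carry the $\gamma$-th power inside the expectation throughout. Two ingredients from that proof can be reused directly: the explicit identity~\eqref{ref:Pi(t)} for the total pressure $\pi(t)$, and the decomposition $\pi=\pi_0+\pi_h$ coming from \cite[Corollary~2.5]{JW}, which together with \eqref{est:po:ph} yields $\sup_{t\in[0,T]}\|\pi_h(t)\|_{m}^m \leq C\sup_{t\in[0,T]}\|\pi(t)\|_{m}^m$ $\P$-a.s., thereby reducing the claim to a $\gamma$-moment bound for $\sup_{t\in[0,T]}\|\pi(t)\|_{m}^m$.

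Starting from~\eqref{ref:Pi(t)} I would apply Minkowski's inequality, the continuous embedding $\L^2(\1)\hookrightarrow \L^m(\1)$ (legitimate because $|\1|<\infty$ and $m\leq 2$), and Young's inequality $a^m\leq 1+a^2$, to obtain the pathwise bound
\begin{align*}
\sup_{t\in[0,T]}\|\pi(t)\|_{m}^m &\leq C\bigg(1+\sup_{t\in[0,T]}\|((\bfI-\kappa\Delta)\cB)^\ast(\uu(t)-\uu_0)\|_{2}^2 \\
&\quad +\int_0^T \|(\nabla\cB)^\ast\bcH(t)\|_{m}^m\,\d t+\sup_{t\in[0,T]}\bigg\|\int_0^t \cB^\ast\Phi(\uu)\,\d\W(s)\bigg\|_{2}^2\bigg).
\end{align*}
Raising this inequality to the $\gamma$-th power via the elementary convexity bound $(\sum_{i=1}^N a_i)^\gamma \leq N^{\gamma-1}\sum_{i=1}^N a_i^\gamma$, and then taking expectation, reduces the task to estimating the $\gamma$-th moment of each of the three non-trivial summands on the right-hand side.

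The first summand is handled exactly as in the proof of Theorem~\ref{pr:th:1}: the continuity of $\cB^\ast$ on $\L^2$, the self-adjointness of $\bfI-\kappa\Delta$, and the identity~\eqref{w:I-kD} yield $\|((\bfI-\kappa\Delta)\cB)^\ast(\uu-\uu_0)\|_{2}^2 \leq C\big(\|\uu\|_2^2+\kappa\|\nabla\uu\|_2^2+\|\uu_0\|_2^2+\kappa\|\nabla\uu_0\|_2^2\big)$, whose $\gamma$-th moment reproduces the $\uu$-terms on the right-hand side of the stated inequality. The $\bcH$-summand is controlled by the $\L^r$-continuity of $(\nabla\cB)^\ast$ from matrix-valued $\L^r$ into $\L^r$, producing directly the term $C\E[(\int_0^T \|\bcH(t)\|_r^r\,\d t)^\gamma]$.

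The only genuinely new technical input is the stochastic summand, which is the main obstacle; here one needs the \emph{higher-moment} Burkholder-Davis-Gundy inequality, stating that for all $\gamma\geq 1$,
\[
\E\bigg[\sup_{t\in[0,T]}\bigg\|\int_0^t \cB^\ast\Phi(\uu)\,\d\W(s)\bigg\|_{2}^{2\gamma}\bigg] \leq C_\gamma\,\E\bigg[\bigg(\int_0^T \|\cB^\ast\Phi(\uu)\|_{\mathcal{L}_2}^2\,\d t\bigg)^\gamma\bigg].
\]
Combining this with $\|\cB^\ast\Phi\|_{\mathcal{L}_2}\leq C\|\Phi\|_{\mathcal{L}_2}$ (from the continuity of $\cB^\ast$ on $\L^2$) and H\"older in time, one obtains $C_\gamma T^\gamma\,\E[\sup_{t\in[0,T]} \|\Phi(\uu(t))\|_{\mathcal{L}_2}^{2\gamma}]$, which is absorbed into the first bracket on the right-hand side of the claimed estimate. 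Collecting all three contributions produces precisely the stated inequality, modulo adjusting constants to account for the elementary convexity bound used in the second step.
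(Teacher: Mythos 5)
Your proof carries the exponent $\gamma$ inside the expectation and establishes the estimate
$\E\big[(\sup_t\|\pi_h(t)\|_m^m)^\gamma\big]\leq C\,\E[A^\gamma]+C\,\E[B^\gamma]$, whereas the corollary as typeset has $\gamma$ outside both sides, i.e. $\big(\E[\sup_t\|\pi_h(t)\|_m^m]\big)^\gamma\leq C\big(\E[A]\big)^\gamma+C\big(\E[B]\big)^\gamma$. With the exponent outside, the corollary is an immediate consequence of Theorem~\ref{pr:th:1}-(1): that theorem is exactly the $\gamma=1$ case, $\E[\sup_t\|\pi_h\|_m^m]\leq C\,\E[A]+C\,\E[B]$; since all quantities are nonnegative and $t\mapsto t^\gamma$ is increasing for $\gamma\geq 1$, raising both sides to the $\gamma$-th power and splitting the right-hand side with $(a+b)^\gamma\leq 2^{\gamma-1}(a^\gamma+b^\gamma)$ gives the statement. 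That one-line argument --- no return to the decomposition of $\pi$, no embedding, and in particular no higher-moment Burkholder--Davis--Gundy estimate --- is what the paper means by ``straightforward from Theorem~\ref{pr:th:1}-(1)''.

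The version you actually prove is different and not interchangeable with the stated one: Jensen goes the wrong way on the right-hand side, so neither implies the other. That said, your reading is defensible and arguably the useful one --- the moment bounds invoked later in \eqref{5.53}--\eqref{5.56} are of the form $\pi_h^n\in\L^\gamma(\Omega;\L^\infty(0,T;\L^2(\1)))$, which requires precisely the ``$\gamma$ inside'' estimate, suggesting the placement of the exponent in the corollary is a typographical slip. Your route via \eqref{ref:Pi(t)}, the decomposition $\pi=\pi_0+\pi_h$ with \eqref{est:po:ph}, the continuity of the Bogovski\u{\i} operators, and the $\gamma$-moment BDG inequality is sound and does deliver that stronger assertion. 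So the verdict is: relative to the literal statement, your argument is correct but far more than is needed; relative to the statement actually used downstream, it supplies a proof the paper elides. Before investing in the heavier machinery you should settle which version is intended.
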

\begin{proof}
	The proof of this corollary is {straightforward} from Theorem \ref{pr:th:1}-(1).
\end{proof}


	\begin{corollary}\label{p:dc:cor:3}
	Let  the {conditions of Theorem \ref{pr:th:1} be satisfied, and assume that the following decomposition holds:}
		\begin{align*}
			{\bcH}={\bcH}_1+{\bcH}_2,
		\end{align*}
where
$\bcH_1 \in \L^{r_1}(\Omega,\mathscr{F},\P;\L^{r_1}(0,T;{\L^{r_1}(\1)^{d\times d}}))$,  ${\bcH}_2 \in \L^{r_2}(\Omega,\mathscr{F},\P;\L^{r_2} (0,T;{\L^{r_2}(\1)^{d\times d}}))$ and $\operatorname{div} {\bcH}_2\in \L^{r_2}(\Omega,\mathscr{F},\P;\L^{r_2}(0,T;{\L^{r_2}(\1)^d}))$. Then, we have
		\begin{align*}
			\pi_{\bcH} =\pi_1+\pi_2,
		\end{align*}
and {there holds for all $\gamma\in[1,\infty)$,}
		\begin{align*}
				\E\bigg[\int_0^T\|\pi_1(t)\|^{r_1}_{r_1}\d t\bigg]^\gamma &\leq C\E\bigg[\int_0^T{\|\bcH_1(t)\|^{r_1}_{r_1}}\d t\bigg]^\gamma,\\
				\E\bigg[\int_0^T\|\pi_2(t)\|^{r_2}_{r_2}\d t\bigg]^\gamma & \leq C\E\bigg[\int_0^T{\|\bcH_2(t)\|^{r_2}_{r_2}}\d t\bigg]^\gamma, \\
				\E\bigg[\int_{0}^{T}\|\nabla \pi_2(t)\|_{r_2}^{r_2}\d t\bigg]^\gamma &\leq C\E\bigg[\int_0^T\big\{\|{\bcH}_2(t)\|^{r_2}_{r_2}+\|\operatorname{div} {\bcH}_2(t)\|^{r_2}_{r_2}\big\}\d t \bigg]^\gamma.
			\end{align*}
	\end{corollary}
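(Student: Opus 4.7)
The plan is to exploit the linearity of the bi-Laplace-type representation used to construct $\pi_{\bcH}$ in the proof of Theorem~\ref{pr:th:1}, and then reduce the third inequality (gradient estimate) to classical elliptic regularity for the bi-Laplace operator. The whole statement is essentially a linear decomposition corollary of the inequality $\|\pi_{\bcH}(t)\|_r \leq C\|\bcH(t)\|_r$ already established in Theorem~\ref{pr:th:1}-(1).

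First I would define, for $i=1,2$, the pressure component $\pi_i(t)\in \Delta \W_0^{2,r_i}(\1)$ as the unique element provided by \cite[Lemma~2.4]{JW} (applied with exponent $r_i$ in place of $m$) solving
\begin{align*}
\int_\1 \pi_i(t)\Delta\psi\,\d\x = \int_\1 \bcH_i(t):\nabla^2\psi\,\d\x,\qquad \forall\ \psi\in\C_0^\infty(\1).
\end{align*}
Progressive measurability is inherited from $\bcH_i$ as in the proof of Theorem~\ref{pr:th:1}. Adding the two equations and using the uniqueness part of the bi-Laplace representation for $\bcH=\bcH_1+\bcH_2$ (in $\Delta \W_0^{2,m}(\1)$ with $m=\min\{2,r\}$, $r=\min\{r_1,r_2\}$) yields the decomposition $\pi_{\bcH} = \pi_1+\pi_2$.

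The first two inequalities are then immediate. The pointwise-in-$t$ bi-Laplace estimate from Theorem~\ref{pr:th:1}-(1) applied separately with $\bcH_i$ in the role of $\bcH$ gives $\|\pi_i(t)\|_{r_i}^{r_i}\leq C\|\bcH_i(t)\|_{r_i}^{r_i}$ for a.e.\ $t$ and $\P$-a.s. Integrating in $t$, raising to the power $\gamma\geq 1$, taking expectations, and using Jensen / Fubini produces the first two displayed bounds.

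For the third inequality, the key step is an integration by parts in the defining equation for $\pi_2$: since $\psi\in \C_0^\infty(\1)$,
\begin{align*}
\int_\1 \pi_2(t)\Delta\psi\,\d\x = \int_\1 \bcH_2(t):\nabla^2\psi\,\d\x = -\int_\1 \operatorname{div}\bcH_2(t)\cdot\nabla\psi\,\d\x.
\end{align*}
Writing $\pi_2=\Delta v$ with $v\in \W_0^{2,r_2}(\1)$, this identifies $v$ as the unique solution of the biharmonic problem $\Delta^2 v = \operatorname{div}(\operatorname{div}\bcH_2)$ with zero Dirichlet conditions on $v$ and $\nabla v$. Since $\operatorname{div}\bcH_2\in \L^{r_2}(\1)^d$ implies $\operatorname{div}(\operatorname{div}\bcH_2)\in \W^{-1,r_2}(\1)$, $\W^{3,r_2}$-regularity of the biharmonic operator on domains with $\C^2$ boundary (see \cite{RM}, \cite[Lemma~2.1]{JW}, \cite[Lemma~2.2]{DBLDSS}) gives $v\in \W^{3,r_2}(\1)$ together with $\|\nabla\Delta v\|_{r_2}\leq C\|\operatorname{div}\bcH_2\|_{r_2}$. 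Combined with the already-proven pointwise bound $\|\pi_2(t)\|_{r_2}\leq C\|\bcH_2(t)\|_{r_2}$, this yields $\|\nabla\pi_2(t)\|_{r_2}^{r_2}\leq C(\|\bcH_2(t)\|_{r_2}^{r_2}+\|\operatorname{div}\bcH_2(t)\|_{r_2}^{r_2})$, from which the third inequality follows by integrating in time, raising to the power $\gamma$, and taking expectations.

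The only non-routine point is the elliptic regularity step for $\nabla\pi_2$; everything else is a transparent linearity/uniqueness argument followed by Theorem~\ref{pr:th:1}-(1). The $\C^2$ assumption on $\partial\1$ made in the hypotheses of Theorem~\ref{thm:exist} is precisely what is needed so that the bi-Laplace gain of derivatives holds up to the boundary for any $1<r_2<\infty$.
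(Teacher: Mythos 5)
Your argument follows the paper's own route: define $\pi_1,\pi_2$ via the weak bi-Laplace problem with data $\bcH_1,\bcH_2$ respectively (what the paper references as \cite[Lemma 2.3]{JW}), obtain $\pi_{\bcH}=\pi_1+\pi_2$ by linearity/uniqueness, and deduce the three bounds from the corresponding bi-Laplace regularity estimates. The paper simply states ``the estimates are straightforward'' after citing the lemma; your integration-by-parts rewriting of $\int_\1\bcH_2:\nabla^2\psi\,\d\x=-\int_\1\operatorname{div}\bcH_2\cdot\nabla\psi\,\d\x$ and the ensuing $\W^{3,r_2}$-regularity step for $v$ with $\pi_2=\Delta v$ is exactly the content being invoked there, just spelled out.
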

	\begin{proof}
		Here $\pi_1$ and $\pi_2$ are the unique solutions of the following equations defined on $\P\otimes \lambda$, a.e.:
		\begin{align*}
				\int_\1 \pi_1(t)\Delta\psi \,\d\x &=-	\int_\1 {\bcH}_1(t):\nabla^2\psi\,\d\x,\qquad {\pi_1\in\Delta\W_0^{2,r_1}(\1)}, \\
				\int_\1 \pi_2(t)\Delta\psi \,\d\x &=-	\int_\1{\bcH}_2(t):\nabla^2\psi \,\d\x,\qquad {\pi_2\in\Delta\W_0^{2,r_2}(\1)}.
				\end{align*}
By \cite[Lemma 2.3]{JW}, and once that $\bcH_1 \in \L^{r_1}(\Omega,\mathscr{F},\P;\L^{r_1}(0,T;{\L^{r_1}(\1)^{d\times d}}))$,
${\bcH}_2 \in \L^{r_2}(\Omega,\mathscr{F},\break\P;\L^{r_2}(0,T;{\L^{r_2}(\1)^{d\times d}}))$ and
$\operatorname{div} {\bcH}_2\in \L^{r_2}(\Omega,\mathscr{F},\P;\L^{r_2}(0,T;{\L^{r_2}(\1)^{d}}))$, the estimates are straightforward.
	\end{proof}
	
	\section{Solvability of The Approximate System}\label{sec5}\setcounter{equation}{0}
	In this section, we establish the solvability of an auxiliary problem that regularizes the problem \eqref{1.1}-\eqref{1.4} with the following stabilization term in the momentum equation:
\begin{equation}\label{ref:a(u)}
\alpha\bfa(\uu),\qquad \bfa(\uu):=|\uu|^{q-2}\uu,\qquad \alpha>0,\qquad 1<q<\infty.
\end{equation}
Given $\alpha>0$, we consider the problem
	\begin{equation}\label{AS1}
		\left\{
			\begin{aligned}
				& \d(\I-\kappa\Delta) \uu  =\big\{\operatorname{div}  \bfA(\uu)-\operatorname{div}  (\uu\otimes\uu)+\nabla\pi -\alpha\bfa(\uu)
				+\f \big\}\d t+\Phi(\uu)\d \W(t),\\
				& \uu(0) =\uu_0,
			\end{aligned}
		\right.
	\end{equation}
depending on the initial $\Lambda_0$ and forcing $\Lambda_{\f},$ laws in the conditions of \eqref{3.4} and \eqref{3.5}, respectively, and for
the operator $\bfA(\uu)$ defined in \eqref{op:A(u)}.
The exponent $q$ in \eqref{ref:a(u)} is chosen in such a way that the convective term becomes a compact perturbation (see \eqref{5.012} below).
For that purpose, we choose
\begin{equation}\label{hyp:q}
q\geq \max\{2p',3\},
\end{equation}
and thus a solution $\uu$ is expected in the following space:
	\begin{align*}
		\bVcal_{p,q}:=\L^2({\Omega,\mathscr{F},\P};\L^\infty(0,T;\bfV))\cap \L^p({\Omega,\mathscr{F},\P};\L^p(0,T;{\W_{0}^{1,p}(\1)^d})\cap \L^q({\Omega,\mathscr{F},\P};\L^q(0,T;{\L^q(\1)^d})).
		\end{align*}
		\subsection{Approximate solutions}\label{Subs:GA}
		We construct a solution to the problem {(\ref{AS1})} as a limit of suitable Galerkin approximations.
		Let $s$ be the smallest positive integer such that ${\W^{s,2}_0(\1)^d}\hookrightarrow {\W^{1,\infty}_0(\1)}^d$, for $s>1+\frac{d}{2}$, and let us consider the space $\bfV_s$ associated to ${\W^{s,2}_0(\1)^d}$ defined in Section \ref{sec2}. 
		By means of separability, there exists a basis $\big\{\mathds{\bpsi}_{k}\big\}_{k\in\N}$ of $\bfV_s$, formed by the eigenfunctions of a suitable spectral problem, that is, orthogonal in ${\L^2(\1)^d}$ and that can be made orthonormal in ${\W^{s,2}_0(\1)^d}$ (see~\cite[Theorem~A.4.11]{MNRR:1996}).
		Given
		$n\in\N$, let us consider the $n-$dimensional space $\bfX^{n}=\mathrm{span}\{\bpsi_{1}, \dots, \bpsi_{n}\}$.
		For each $n\in\N$, we search for approximate solutions of the form
		\begin{align}\label{5.1}
			\uu_n(x,t)= \sum_{k=1}^nc_k^n(t)\bpsi_{k}(x),\quad\bpsi_{k}\in \bfX^n,
		\end{align}
		where the coefficients $c_{1}^n(t),\dots,c_{n}^n(t)$
		are solutions of the following $n$ stochastic ordinary differential equations
		\begin{align}\nonumber
			\label{5.2}%
				&
				\d\Big[\big(\uu_n(t),\bpsi_{k}\big)+\kappa\big(\nabla\uu_n(t),\nabla\bpsi_{k}\big)\Big]
				\\
				&\nonumber=
				\Big[\big(\uu_n(t)\otimes\uu_n(t):\nabla\bpsi_{k}\big)-\nu\big<|\bfD(\uu_n(t))|^{p-2}\bfD(\uu_n(t)):\bfD(\bpsi_{k})\big>
				+
				\big(\f(t),\bpsi_{k}\big) \Big]\d t  \\&\qquad-\alpha \big(\bfa(\uu_n(t)),\bpsi_l\big)+{\Phi(\uu_n(t))}\d\W_n(t),\bpsi_{k}\big), \qquad k=1,\dots,n,
			\end{align}
		supplemented with the initial conditions
		\begin{align}\label{5.3}
			\uu_n(0)=\uu_0^n,\quad\mbox{in}\ \1,
		\end{align}
		where $\uu_{0}^{n}=P^n(\uu _{0})$, with $P^n$ denoting the orthogonal projection $P^n:\bfV\longrightarrow \bfX^n$ so that
		\begin{align*}
			\uu_n(0,x)=\sum_{k=1}^n c_k^n(0)\bpsi_k(x),\quad c_k^n(0)=c_{k,0}^n:=(\uu _{0},\bpsi_k),\quad k=1,\dots,n.
		\end{align*}
		In (\ref{5.2}), we assume that the approximate cylindrical Wiener process $\big\{\W_n(\cdot)\big\}$ has the form
		$$\W_n(t)=\sum_{k=1}^{n}\bfe_k\beta_k(t),$$
		and note that (\ref{5.2}) is to be understood $\P-$a.s., and for all $t\in(0,T]$.
		Observe that the stochastic system (\ref{5.2})-(\ref{5.3}) can be written in the matrix form as follows,
		\begin{align}\label{5.4}
			\bfC \d\bfc(t)=\bfb(t)\d t + \bfG(t)\d\bbeta(t), \quad \bfc(0)=\bfc_{n,0},
		\end{align}
		where
		$\bfC=\left\{a^n_{lm}\right\}_{l,m=1}^n$, $\bfb(t)=\{b_{l}^n(t)\}_{l=1}^n$, $\bfc(t)=\{c_m^n(t)\}_{m=1}^n$,
		$\bfG(t)=\left\{g^n_{lm}(t)\right\}_{l,m=1}^n$ and $\bbeta(t)=\{\beta_m^n(t)\}_{m=1}^n$,  with
		\begin{alignat*}{5}
			 a_{lm}^n&:=
			\left(\bpsi_{l},\bpsi_{m}\right)+\kappa\left(\nabla\bpsi_{l},\nabla\bpsi_{m}\right), \\
			 b_{l}^n(t)&:=
			\big(\f(t),\bpsi_{l}\big) + \big(\uu^n_n(t)\otimes\uu^n_n(t):\nabla\bpsi_{l}\big)
			-\nu\big<|\bfD(\uu_n(t))|^{p-2}\bfD(\uu_n(t)):\bfD(\bpsi_{l})\big>\\&\quad -\alpha \big(\bfa(\uu_n(t)),\bpsi_l\big),\\
			 \bfc(t)&:=(c_{1}^n(t),\dots,c_{n}^n(t))\qquad\mbox{and}\qquad
			\bfc_{0}={(c_{1,0}^n,\dots,c_{n,0}^n)}, \\
			 g^n_{lm}(t)&:=\big(\Phi(\uu_n(t))\bfe_l,\bpsi_m\big), \\
			 \bbeta(t)&:=(\beta_{1}^n(t),\dots,\beta_{n}^n(t)).
		\end{alignat*}
Taking into account that the family $\big\{\bpsi_{k}\big\}_{k\in\N}$ is linearly independent in $\bfV$,
		$\big(\bfC\bxi,\bxi\big)>0$ for all $\bxi\in\R^n\setminus\{0\}$, we can write (\ref{5.4}) in the form
		\begin{align}\label{5.5}
			\d\bfc(t)=\overline{\bfb(t)}\d t + \overline{\bfG(t)}\d\bbeta(t), \quad \bfc(0)=\bfc_{n,0},
		\end{align}
		where $\overline{\bfb(t)}=\bfC^{-1}\bfb(t)$ and $\overline{\bfG(t)}=\bfC^{-1}\bfG(t)$.
		
		If both the coefficients $\bfb(\cdot)$ and $\bfG(\cdot)$ are globally {Lipschitz-continuous}, one can use the classical existence results for stochastic differential equations. {Our case does not fall in that category, so we have to use the monotonicity method and verify the conditions of the} vectorial version of Lemma \ref{prop:e:sde} as follows: 
		\begin{align*}
				&\langle \bfb(\cdot,\uu_n)-\bfb(\cdot,\vv_n),\uu_n-\vv_n\rangle \\&=\big( \uu_n\otimes\uu_n-\vv_n\otimes \vv_n: \nabla(\uu_n-\vv_n)\big) \\ &
				\quad-\nu\langle |\bfD(\uu_n)|^{p-2}\bfD(\uu_n)-|\bfD(\vv_n)|^{p-2}\bfD(\vv_n):\bfD(\uu_n-\vv_n)\rangle \\&\quad -\alpha\big( |\uu_n|^{q-2}\uu_n-|\vv_n|^{q-2}\vv_n,\uu_n-\vv_n\big)
				\\& \leq
				\big( \uu_n\otimes\uu_n-\vv_n\otimes \vv_n: \nabla(\uu_n-\vv_n)\big),
				\end{align*}where we have used \eqref{2.4}. If $\|\uu_n\|_{2} \leq R$ and $\|\vv_n\|_{2} \leq R$, there exists a constant $C(R,n)>0$ such that
		\begin{align*}
				\langle \bfb(t,\uu_n)-\bfb(t,\vv_n),\uu_n-\vv_n\rangle \leq C(R,n)\|\uu_n-\vv_n\|^2_{2}.
			\end{align*}
This, together with the Lipschitz-continuity of $\bfG$ (cf. \eqref{3.6a}), gives us the weak monotonicity property in the sense of Lemma~\ref{prop:e:sde}-(2). Using the fact $\langle \uu_n\otimes\uu_n:\nabla \uu_n\rangle =0$, we find
		\begin{align*}
	   \langle \bfb(\cdot,\uu_n),\uu_n\rangle
&=   -\nu \langle|\bfD(\uu_n)|^{p-2}\bfD(\uu_n):\bfD(\uu_n)\rangle -\alpha\big( |\uu_n|^{q-2}\uu_n,\uu_n\big)+\big( \f,\uu_n\big) \\
&	\leq 			C(1+\|\f\|_{2}\|\uu_n\|_{2}) \leq C(1+\|\f\|_{2})(1+\|\uu_n\|^2_{2}).			
		\end{align*}
Using the linear growth of $\bfG$ cf. \eqref{3.6a}, we find
		\begin{align*}
				\langle \bfb(t,\uu_n),\uu_n\rangle+\|\bfG\|_{\mathcal{L}_2}^2  \leq C(1+\|\f\|_{2})(1+\|\uu_n\|^2_{2}).
				\end{align*}
Since the term $\int_{0}^{T}(1+\|\f\|_{2})\d t$ is finite, $\P-$a.s., {this yields the weak coercivity property in the sense of Lemma~\ref{prop:e:sde}-(3).
By Lemma~\ref{prop:e:sde}, we obtain the existence of a unique strong solution $\bfC\bfc\in \L^2(\Omega,\mathscr{F},\P;\C([0,T^*];\R^n))$ to the stochastic system \eqref{5.4}}, which implies
		\begin{align*}
			\sup_{t\in[0,T^*]}\|\bfC\bfc(t)\|_{\R^n} <+\infty.
		\end{align*}
Our {aim is} to show that $\sup\limits_{t\in[0,T^*]}\| \bfc(t)\|_{\R^n} <+\infty$, which we obtain as follows:
		\begin{align*}			
				\sup_{t\in[0,T^*]}\| \bfc(t)\|_{\R^n} &=\sup_{t\in[0,T^*]}\|\bfC^{-1}\bfC\bfc(t)\|_{\R^n} \leq \|\bfC^{-1}\|_{\mathcal{L}(\R^n;\R^n)}\sup_{t\in[0,T^*]}\|\bfC\bfc(t)\|_{\R^n}
				<+\infty,
		\end{align*}since $\bfC^{-1}$ is a bounded operator.
		Thus, we obtain the existence of {a unique strong solution to the stochastic system \eqref{5.3}}.

		\subsection{Uniform estimates}\label{uestimate}
In this subsection, we establish the uniform energy estimate for the solution of the finite-dimensional approximate system corresponding to the system \eqref{AS1}.

		\begin{theorem}\label{thrmUE}
Let $p\in(1,\infty)$ and assume that \eqref{3.6a} and \eqref{hyp:q} are verified.
Assume, in addition, that \eqref{3.4} and \eqref{3.5} hold with $\gamma=2$.
Then there exists a positive constant $C,$ neither depending on $n$ nor on $\alpha$ such that
		\begin{align}\label{5.9}\nonumber
		&
		\E\left[\sup_{t\in(0,T)}\left\{\|\uu_n(t)\|^2_{2} + 2\kappa{\|\nabla\uu_n(t)\|_{2}^2}\right\} +4C({p,\1})\nu
		\int_0^T{\|\nabla\uu_n(t)\|_{p}^p}\d t+2\alpha \int_0^{T}\|\uu_n(t)\|^q_{q}\d t
		\right]  \\
		&\leq
		C(\kappa)\left\{\bigg(\frac{1}{\eta_1}+\kappa\bigg)\int_\bfV\|\bz\|_{\bfV}^2\d\Lambda_0(\bz)+\int_{\bfL^{2}(\1_T)}\|\bfg\|_{\bfL^{2}(\1_T)}^2\d\Lambda_{\f}(\bfg)+C(K)T\right\}e^{\frac{C(K)T}{\eta_1}},
\end{align}where $C$ is independent of $\alpha$ and $\eta_1$ is the first eigenvalue of the Dirichlet Laplacian.
	\end{theorem}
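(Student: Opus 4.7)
The plan is to derive an energy identity by applying It\^o's formula to the Galerkin approximation and then to close the estimate via Korn's inequality, the Burkholder-Davis-Gundy (BDG) inequality, and Gr\"onwall's inequality. Since $u_n(t)=\sum_{k=1}^n c_k^n(t)\bpsi_k$ lives in the finite-dimensional space $\bfX^n$, we can view the Galerkin system \eqref{5.2} as an SDE in $(\bfX^n,\|\cdot\|_\kappa)$, where $\|u\|_\kappa^2:=\|u\|_{2}^2+\kappa\|\nabla u\|_{2}^2$ is the norm induced by the operator $\bfI-\kappa\Delta$. Applying It\^o's formula to $\tfrac12\|u_n\|_\kappa^2$ (i.e.\ to $\tfrac12\bfc^\top\bfC\bfc$ in the matrix form \eqref{5.4}) yields the identity
\begin{align*}
&\tfrac12\|u_n(t)\|_{2}^2+\tfrac{\kappa}{2}\|\nabla u_n(t)\|_{2}^2+\nu\int_0^t\|\bfD(u_n)\|_{p}^p\,ds+\alpha\int_0^t\|u_n\|_{q}^q\,ds\\
&=\tfrac12\|u_0^n\|_{2}^2+\tfrac{\kappa}{2}\|\nabla u_0^n\|_{2}^2+\int_0^t\big(\f,u_n\big)\,ds+M_n(t)+\tfrac12\int_0^t R_n(s)\,ds,
\end{align*}
where the convective contribution vanishes since $\big(u_n\otimes u_n,\nabla u_n\big)=0$ by $\operatorname{div}u_n=0$, $M_n(t):=\int_0^t(u_n,\Phi(u_n)\,dW_n)$ is a martingale, and $R_n(s)$ is the It\^o correction which, since $\bfC^{-1}\preceq\bfI$ on $\bfX^n$, satisfies $R_n(s)\leq\|\Phi(u_n)\|_{\mathcal{L}_2}^2$.

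Next I would apply a Korn-type inequality $\|\nabla u_n\|_{p}^p\leq C(p,\1)^{-1}\|\bfD(u_n)\|_{p}^p$ to convert the dissipation into the form appearing on the left-hand side of \eqref{5.9}. For the forcing term, Cauchy-Schwarz followed by Young's inequality gives $|(\f,u_n)|\leq \tfrac12\|\f\|_{2}^2+\tfrac12\|u_n\|_{2}^2$, and for the It\^o correction, the growth assumption \eqref{3.6a} yields $R_n(s)\leq 2K^2(|\1|+\|u_n\|_{2}^2)$. Taking the supremum in time and then expectation, the stochastic integral is controlled by the BDG inequality and Young's inequality with $\varepsilon$:
\begin{align*}
\E\sup_{t\in[0,T]}|M_n(t)|&\leq C\,\E\Big(\int_0^T\|u_n\|_{2}^2\,\|\Phi(u_n)\|_{\mathcal{L}_2}^2\,ds\Big)^{1/2}\\
&\leq\tfrac14\E\sup_{t\in[0,T]}\|u_n(t)\|_{2}^2+C\,K^2\,\E\int_0^T\big(1+\|u_n\|_{2}^2\big)\,ds,
\end{align*}
so the small multiple of the energy can be absorbed into the left-hand side.

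Finally, using Poincar\'e's inequality $\|u_n\|_{2}^2\leq\eta_1^{-1}\|\nabla u_n\|_{2}^2$ to convert the remaining $\|u_n\|_{2}^2$ terms into $\|\nabla u_n\|_{2}^2$, together with $\|u_0^n\|_\kappa^2\leq(\eta_1^{-1}+\kappa)\|\nabla u_0^n\|_{2}^2\leq (\eta_1^{-1}+\kappa)\|u_0\|_{\bfV}^2$ (by stability of $P^n$ in $\bfV$) and the initial/forcing law conditions \eqref{3.4}-\eqref{3.5} with $\gamma=2$, I would arrive at an integral inequality of the type
\[
\Psi(t):=\E\sup_{s\in[0,t]}\|u_n(s)\|_\kappa^2+\text{(dissipation)}\leq A+\frac{C(K)}{\eta_1}\int_0^t\Psi(s)\,ds,
\]
where $A$ is exactly the constant on the right-hand side of \eqref{5.9} without the exponential factor. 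Applying the classical Gr\"onwall lemma then produces the claimed exponential $e^{C(K)T/\eta_1}$ and completes the proof. The main delicacy is the simultaneous handling of the It\^o correction in the $\kappa$-dependent inner product (which is why one must verify $\bfC^{-1}\preceq\bfI$ and keep track of all constants independent of $n$ and $\alpha$), together with the absorption of $\tfrac14\E\sup_t\|u_n\|_{2}^2$ from the BDG step into the left-hand side before Gr\"onwall can be applied.
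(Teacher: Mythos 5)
Your argument follows essentially the same route as the paper's proof: It\^o's formula applied to $\|(\I-\kappa\Delta)^{1/2}\uu_n\|_2^2$, Korn's inequality to pass from $\bfD(\uu_n)$ to $\nabla\uu_n$, the linear-growth bound \eqref{3.6a} for the It\^o correction, the BDG plus Young inequality to absorb the supremum of the stochastic integral, Poincar\'e's inequality to reduce to the $\bfV$-norm, and Gr\"onwall's lemma. The paper additionally localizes with the stopping times $\tau_N^n$ of \eqref{ST} and then lets $N\to\infty$ via monotone convergence in order to justify taking expectations of the stochastic integral term; your remark that $\bfC^{-1}\preceq\bfI$ is the right finite-dimensional reason the It\^o correction is majorized by $\|\Phi(\uu_n)\|_{\mathcal{L}_2}^2$, a point the paper leaves implicit.
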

\begin{proof}	Define  a sequence of stopping times $\{\tau_N^n\}_{N\in\mathbb{N}}$ as follows:
	\begin{align}\label{ST}
		\tau_N^n:= \inf_{t\in[0,T]} \{t : \|\nabla \uu_n(t)\|_2 \geq N\}.
	\end{align}
	Applying finite-dimensional  It\^o's formula to the process $\|(\I-\kappa\Delta)^{\frac{1}{2}}\uu_n(\cdot)\|^2_{2}$, we obtain, $\P-$a.s.,
		\begin{align}\label{5.6}\nonumber			
&	\|\uu_n(\t)\|^2_{2}+\kappa\|\nabla\uu_n(\t)\|_{2}^2 	\\\nonumber
&=
\|\uu_n(0)\|^2_{2}+\kappa\|\nabla\uu_n(0)\|_{2}^2-2\nu\int_{0}^{\t} \int_\1 \bfA(\uu_n):\bfD(\uu_n)\,\d\x \d s
\\&\quad\nonumber -2\alpha\int_0^{\t}\int_\1\bfa(\uu_n)\cdot\uu_n\,\d\x \d s
+2\int_{0}^{\t}\int_\1\Phi(\uu_n)\d\W_n(s)\cdot\uu_n\,\d\x\\&\nonumber\quad
\quad+\int_0^{\t}\int_\1\d \bigg\langle\int_0^{\cdot} \Phi(\uu_n)\d\W_n\bigg\rangle_s\,\d\x\\
&\nonumber
\leq \bigg(\frac{1}{\eta_1}+\kappa\bigg)\|\nabla\uu_n(0)\|_{2}^2-2\nu\int_{0}^{t}\|\bfD(\uu_n(s))\|_{p}^p \d s-2\alpha\int_0^{\t}\|\uu_n(s)\|^q_{q}\d s
				\\
&\nonumber
\quad+2 \int_{0}^{\t}\int_\1 \f\cdot\uu_n\,\d\x\d s +2\int_{0}^{\t}\int_\1\Phi(\uu_n)\d\W_n(s)\cdot\uu_n\d\x \\&\quad + \int_{0}^{\t}\int_\1\d \bigg\langle\int_0^{\cdot} \Phi(\uu_n)\d\W_n\bigg\rangle_s\,\d\x,
				\end{align}
where we have used  Poincar\'e's inequality. Taking expectation on both sides and then using Korn's inequality (cf. \cite{VAK} or \cite[Lemma 2.1]{SNAHBGKK1}), we find
		\begin{align*}
&\E\bigg[	\|\uu_n(\t)\|^2_{2}+\kappa\|\nabla\uu_n(\t)\|_{2}^2+2C({p,\1})\nu\int_{0}^{\t}\|\nabla\uu_n(s)\|_{p}^p \d s \bigg]\\&\qquad+2\alpha \E\bigg[\int_0^{\t}\|\uu_n(s)\|^q_{q}\d s \bigg]\\
& \leq \E\bigg[\bigg(\frac{1}{\eta_1}+\kappa\bigg)\|\nabla\uu_n(0)\|_{2}^2 +
2 \underbrace{\int_{0}^{\t}\int_\1 \f\cdot\uu_n\,\d\x\d s}_{:=I_1(\t)}  +
\underbrace{\int_{0}^{\t}\int_\1\d \bigg\langle\int_0^{\cdot} \Phi(\uu_n)\d\W_n\bigg\rangle_s\,\d\x}_{:=I_2(\t)} \\
&\quad+2\underbrace{\int_{0}^{\t}\int_\1\Phi(\uu_n)\d\W_n(s)\cdot\uu_n\d\x}_{:=I_3(\t)}\bigg]
				\\
& \leq  \bigg(\frac{1}{\eta_1}+\kappa\bigg)\E\big[\|\nabla\uu_n(0)\|_{2}^2\big]+ 2\E\big[I_1(\t)\big]+ \E\big[I_2(\t)\big]+2 \E\big[I_3(\t)\big].
		\end{align*}
We estimate $\E\big[|I_1(\t)|\big]$ using Young's and Poincaré's inequalities as follows:
		\begin{align*}
			\E\big[|I_1(\t)|\big] &\leq  \e\E\bigg[\int_{0}^{\t}\|\uu_n(s)\|^2_{2}\d s\bigg]+C(\e)\E\bigg[\int_{0}^{\t}\|\f(s)\|^2_{2}\d s\bigg]\\& \leq \frac{\e}{\eta_1} \E\bigg[\int_{0}^{\t}{\|\nabla\uu_n(s)\|_{2}^2}\d s\bigg]+C(\e)\E\bigg[\int_{0}^{\t}\|\f(s)\|^2_{2}\d s\bigg].
		\end{align*}
We know that $\E\big[I_3(\t)\big]=0$, since the term $I_3(\t)$ is a martingale with zero expectation.
	Now, we consider the term $\E\big[I_2(\t)\big]$ and estimate it with the help of  It\^o's isometry, \eqref{3.6a} {and Poincaré's inequality} as,
	\begin{align*}
		\E\big[I_2(\t)\big] &=\E\bigg[\sum_{k=1}^n\int_0^{\t}\int_\1\big|\Phi(\uu_n)\bfe_k\big|^2\d\x\d s\bigg]\\&\leq \E\bigg[\sum_{k\in\N}\int_0^{\t}\int_\1|\phi_k(\uu_n)|^2\,\d\x\d s\bigg]\leq C(K) \E\bigg[T+{\frac{1}{\eta_1}\int_0^{\t}\|\nabla\uu_n(s)\|^2_{2}}\d s\bigg].
	\end{align*}
	Combining the above estimates and using it in \eqref{5.6}, we find
		\begin{align*}			
				&\E\bigg[	\|\uu_n(\t)\|^2_{2}+\kappa{\|\nabla\uu_n(\t)\|_{2}^2}+2C({p,\1})\nu\int_{0}^{\t}{\|\nabla\uu_n(s)\|_{p}^p} \d s +2\alpha \int_0^{\t}\|\uu_n(s)\|^q_{q}\d s
				\bigg]\\
& \leq \bigg(\frac{1}{\eta_1}+\kappa\bigg)\E\big[{\|\nabla\uu_n(0)\|_{2}^2}\big]+C(K,T)+\frac{1}{\eta_1}(C(K)+\e) \E\bigg[\int_{0}^{\t}{\|\nabla\uu_n(s)\|_{2}^2}\d s\bigg]\\&\quad+C(\e)\E\bigg[\int_{0}^{\t}\|\f(s)\|^2_{2}\d s\bigg].
				\end{align*}
An application of Gronwall's inequality with a proper choice of $\e$  in the above inequality yields
		\begin{align}\label{5.7}\nonumber
				&\E\bigg[{\|\nabla\uu_n(\t)\|_{2}^2}\bigg] \\&\leq C(\kappa)\bigg\{\bigg(\frac{1}{\eta_1}+\kappa\bigg)\E\big[{\|\nabla\uu_n(0)\|_{2}^2}\big]+C\E\bigg[\int_{0}^{T}\|\f(s)\|^2_{2}\d s\bigg]+C(K,T)\bigg\}e^{\frac{C(K)T}{\eta_1}}.
		\end{align}Taking limit $N\to\infty$ in \eqref{5.7} and using the monotone convergence theorem, we get
		\begin{align}\label{5.007}\nonumber
		&\E\bigg[{\|\nabla\uu_n(t)\|_{2}^2}\bigg] \\&\leq C(\kappa)\bigg\{\bigg(\frac{1}{\eta_1}+\kappa\bigg)\E\big[{\|\nabla\uu_0\|_{2}^2}\big]+C\E\bigg[\int_{0}^{T}\|\f(s)\|^2_{2}\d s\bigg]+C(K,T)\bigg\}e^{\frac{C(K)T}{\eta_1}},
	\end{align}for all $t\in[0,T]$.

Taking supremum from 0 to $\tt$ and then expectation in \eqref{5.6}, we find
		\begin{align}\label{5.8}\nonumber
			&	
\E\bigg[\sup_{t\in[0,\tt]}\big\{\|\uu_n(t)\|^2_{2}+\kappa{\|\nabla\uu_n(t)\|_{2}^2}\big\}+2C({p,\1})\nu\int_{0}^{\tt}{\|\nabla\uu_n(s)\|_{p}^p} \d s \bigg] \\
&
\qquad\nonumber+2\alpha \E\bigg[\int_0^{\tt}\|\uu_n(s)\|^q_{q}\d s \bigg] \\
&\nonumber \leq \bigg(\frac{1}{\eta_1}+\kappa\bigg)\E\big[{\|\nabla\uu_0\|_{2}^2}\big]+C(\e)\E\bigg[ \int_{0}^{T}\|\f(s)\|^2_{2}\d s\bigg]+C(K)T\\&\quad+\frac{1}{\eta_1}(C(K)+\e) \E\bigg[\int_{0}^{\tt}{\|\nabla\uu_n(s)\|_{2}^2}\d s\bigg]+2\E\bigg[\sup_{t\in[0,\tt]}I_3     (t)\bigg].
		\end{align}
We estimate the final term from the right hand side of  \eqref{5.8} using the BDG (see \cite[Theorem 1.1]{DLB}), H\"older's and Young's inequalities as follows:
		\begin{align}\label{BDG}\nonumber
					\E\bigg[\sup_{t\in[0,\tt]}|I_3(t)|\bigg]&\nonumber=	\E\bigg[\sup_{t\in[0,\tt]}\bigg|\int_{0}^{t}\int_\1\Phi(\uu_n)\d\W_n(s)\cdot\uu_n(s)\,\d\x\bigg|\bigg]\\
&\nonumber
=					\E\bigg[\sup_{t\in[0,\tt]}\bigg|\int_{0}^{t}\sum_{k=1}^n\int_\1\Phi(\uu_n)\bfe_k\d\beta_k(s)\cdot\uu_n\,\d\x\bigg|\bigg]
\\&\nonumber
=					\E\bigg[\sup_{t\in[0,\tt]}\bigg|\int_{0}^{t}\sum_{k=1}^n\int_\1\phi_k(\uu_n)\d\beta_k(s)\cdot\uu_n\,\d\x\bigg|\bigg] \\
&\nonumber
\leq C\E \bigg[\int_0^{\tt} \sum_{k=1}^n\bigg(\int_\1\phi_k(\uu_n)\cdot\uu_n\,\d\x\bigg)^2\d s\bigg]^\frac{1}{2} \\&\nonumber
\leq C\E \bigg[\int_0^{\tt} \sum_{k=1}^n\bigg(\int_\1|\uu_n|^2\d \x \int_\1|\phi_k(\uu_n)|^2\d\x\bigg)\d s\bigg]^\frac{1}{2} \\&\nonumber
\leq C(K)\E \bigg[1+\int_0^{\tt}\bigg(\int_\1|\uu_n|^2\d \x\bigg)^2\d s\bigg]^\frac{1}{2} \\&\leq \e \E \bigg[\sup_{t\in[0,\tt]} \|\uu_n(s)\|^2_{2}\bigg]+C(\e,K)\E\bigg[T+\int_0^{\tt}\|\uu_n(s)\|^2_{2}\d s\bigg].
					\end{align}
Substituting the above estimate in \eqref{5.8} with a proper choice of $\e$, and an application of Gronwall's inequality yields
		\begin{align*}
			&\E\bigg[\sup_{t\in[0,\tt]}\big\{\|\uu_n(t)\|^2_{2}+2\kappa{\|\nabla\uu_n(t)\|_{2}^2}\big\}+4C({p,\1})\nu\int_{0}^{\tt}{\|\nabla\uu_n(t)\|_{p}^p}\d t\bigg]\\&\qquad+2\alpha \E\bigg[\int_0^{\tt}\|\uu_n(t)\|^q_{q}\d t
			 \bigg]\\& \leq
			C(\kappa) \bigg\{\bigg(\frac{1}{\eta_1}+\kappa\bigg)\int_{\bfV}\|\bz\|_{\bfV}^2\d \Lambda_0(\bz)+2C\int_{\bfL^2(\1_T)}\|\bfg\|_{\bfL^2(\1_T)}^2\d \Lambda_{\f}(\bfg)+C(K,T)\bigg\}e^{\frac{C(K)T}{\eta_1}}.
		\end{align*}
	Finally, passing $N\to \infty$ in the above inequality, we arrive at
		\begin{align*}
			&
				\E\bigg[\sup_{t\in[0,T]}\left\{\|\uu_n(t)\|^2_{2} + 2\kappa{\|\nabla\uu_n(t)\|_{2}^2}\right\} +4C({p,\1})\nu
				\int_0^T{\|\nabla\uu_n(t)\|_{p}^p}\d t+2\alpha \int_0^{T}\|\uu_n(t)\|^q_{q}\d t
				 \bigg]  \\
				&\leq
				C(\kappa)\left\{\bigg(\frac{1}{\eta_1}+\kappa\bigg)\int_\bfV\|\bz\|_{\bfV}^2\d\Lambda_0(\bz)+\int_{\bfL^{2}(\1_T)}\|\bfg\|_{\bfL^{2}(\1_T)}^2\d\Lambda_{\f}(\bfg)+C(K,T)\right\}e^{\frac{C(K)T}{\eta_1}},
		\end{align*}
		by assuming the right-hand side is finite, which is true due to (\ref{3.4}) and (\ref{3.5}).\end{proof}
	
	\begin{theorem}\label{thrmUE2}
Let $p\in(1,\infty)$ and assume the conditions \eqref{3.6a} and \eqref{hyp:q} are satisfied.
Assume, in addition, that \eqref{3.4} and \eqref{3.5} hold with $\gamma=2$.
Then there exists a \emph{martingale solution} to the approximate system \eqref{AS1},
		\begin{align*}
			((\overline{\Omega},\overline{\mathscr{F}},\{\overline{\mathscr{F}}_t\}_{t\in[0,T]},\overline{\P}),\overline{\uu},\overline{\uu}_0,\overline{\f},\overline{\W}),
		\end{align*}
in the following sense:
	\begin{enumerate}
		\item $ (\overline{\Omega},\overline{\mathscr{F}},\{\overline{\mathscr{F}}_t\}_{t\in[0,T]},\overline{\P})$ is {a} stochastic basis, with a complete right-continuous filtration $\{\overline{\mathscr{F}}_t\}_{t\in [0,T]}$;
		\item $\overline{\W}$ is a cylindrical $\{\overline{\mathscr{F}}_t\}_{t\in [0,T]}-$adapted Wiener process;
		\item $\overline{\uu}$ is a progressively $\{\overline{\mathscr{F}}_t\}_{t\in [0,T]}-$measurable stochastic process  with $\overline{\P}-$a.s. paths	$t\mapsto\overline{\uu}(t,\omega)\in \L^\infty(0,T;\bfV)\cap \L^p(0,T;{\W_{0}^{1,p}(\1)^d})\cap \L^q(0,T;{\L^q(\1)^d})$, with a continuous modification having paths in $\C([0,T];\bfV)$;
		\item $\overline{\uu}_0$ is progressively $\{\overline{\mathscr{F}}_t\}_{t\in [0,T]}-$measurable on the probability space $(\overline{\Omega},\overline{\mathscr{F}},\overline{\P})$,  with $\overline{\P}-$a.s. paths $\overline{\uu}_0(\omega)\in \bfV$ and $\Lambda_0=\overline{\P}\circ\overline{\uu}_0^{-1}$ in the sense of (\ref{3.1});
		\item $\overline{\f}$ is a $\{\overline{\mathscr{F}}_t\}_{t\in [0,T]}-$adapted stochastic process{, with} $\overline{\P}-$a.s. paths $\overline{\f}(t,\omega)\in \bfL^2(\1_T)$ and	$\Lambda_{\overline{\f}}=\overline{\P}\circ\overline{\f}^{-1}$ in the sense of (\ref{3.2});
		\item for every $\bfi\in {\C_0^{\infty}(\1)^d}${, with $\operatorname{div}\bfi=0$ in $\1_T$, and for} all $t\in[0,T],$ the following identity holds $\overline{\P}-$a.s.:
		\begin{align}\label{3.3}\nonumber
			& \int_\1\overline{\uu}(t)\cdot\bfi\,\d\x + \kappa\int_\1\nabla\overline{\uu}(t):\nabla\bfi\,\d\x - \int_0^t\int_\1\overline{\uu}\otimes\overline{\uu}:\mathds{\nabla}\bfi\,\d\x\d s \\
&\nonumber\quad +\alpha \int_0^t\int_\1|\overline{\uu}|^{q-2}\overline{\uu}\cdot{\bfi}\,\d\x\d s	+	\nu\int_0^t\int_\1|\bfD(\overline{\uu})|^{p-2}\bfD(\overline{\uu}):\bfD\bfi \,\d\x\d s
			\\&\nonumber=
			 \int_\1\overline{\uu}_0\cdot\bfi\,\d\x + \kappa \int_\1\nabla\overline{\uu}_0:\nabla\bfi\,\d\x +
		\int_0^t\int_\1 \overline{\f}\cdot\bfi\,\d\x\d s \\&\quad+ \int_0^t\int_\1\Phi(\overline{\uu})\d\overline{\W}(s)\cdot\bfi\,\d\x.
		\end{align}	\end{enumerate}
{In addition, there exists a positive constant $C$, not depending on $\alpha$, such that}
		\begin{align}\label{UE2}\nonumber
		&
		\E\left[\sup_{t\in[0,T]}\left\{\|\overline{\uu}(t)\|^2_{2} + 2\kappa{\|\nabla\overline{\uu}(t)\|_{2}^2}\right\} +4\nu
		\int_0^T{\|\nabla\overline{\uu}(t)\|_{p}^p}\d t+2\alpha \int_0^{T}\|\overline{\uu}(t)\|^q_{q}\d t
		\right]  \\
		&\leq
		C\left\{\bigg(\frac{1}{\lambda_1}+\kappa\bigg)\int_\bfV\|\overline{\bz}\|_{\bfV}^2\d\Lambda_0(\overline{\bz})+\int_{\bfL^{2}(\1_T)}\|\overline{\bfg}\|_{\bfL^{2}(\1_T)}^2\d\Lambda_{\overline{\f}}(\overline{\bfg})+C(K,T)\right\}.
	\end{align}
	\end{theorem}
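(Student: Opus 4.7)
The plan is the standard compactness/Skorokhod/monotonicity strategy, exploiting that, for the stabilized system \eqref{AS1}, both the $p$-Laplacian-type operator $\bfA(\uu)=|\bfD(\uu)|^{p-2}\bfD(\uu)$ and the damping $\bfa(\uu)=|\uu|^{q-2}\uu$ are monotone, while the convective term $\uu\otimes\uu$ becomes a compact perturbation thanks to the choice $q\geq\max\{2p',3\}$ and to the Voigt regularization $\kappa(-\Delta)$ in the time derivative.

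\textbf{Step 1: Tightness.} Starting from the Galerkin solutions $\uu_n$ built in Subsection~\ref{Subs:GA}, Theorem~\ref{thrmUE} supplies uniform $n$-bounds in $\L^2(\Omega;\L^\infty(0,T;\bfV))\cap \L^p(\Omega;\L^p(0,T;\W_0^{1,p}(\1)^d))\cap \L^q(\Omega;\L^q(0,T;\L^q(\1)^d))$. To upgrade weak compactness into tightness I would complement this with a fractional-in-time Nikolskii-type estimate on $(\I-\kappa\Delta)\uu_n$ obtained from equation \eqref{5.2} by splitting the drift (bounded in a negative Sobolev-Lebesgue space by the $n$-uniform bounds) and the stochastic integral (controlled by BDG as in \eqref{BDG}). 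Inverting $(\I-\kappa\Delta)$, which is an isomorphism $\bfV^{-1}\to \bfV$ with Dirichlet data, gives $\uu_n\in \C^{\theta}([0,T];\bfH)$ uniformly for some $\theta>0$. Combined with the uniform $\L^\infty(0,T;\bfV)$ bound and Aubin-Lions, this yields tightness of the laws of $\{\uu_n\}$ on
\[
\mathcal{Z}:=\C([0,T];\bfH)\cap \C_{\mathrm{w}}([0,T];\bfV)\cap \L^p_{\mathrm{w}}(0,T;\W_0^{1,p}(\1)^d)\cap \L^q_{\mathrm{w}}(0,T;\L^q(\1)^d).
\]
Tightness of $\{\W_n\}$ on $\C([0,T];\bfU_0)$ and of $\{\uu_0^n,\f\}$ on $\bfV\times \bfL^2(\1_T)$ is classical.

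\textbf{Step 2: Skorokhod representation and limit equation.} Applying the Jakubowski-Skorokhod theorem on the (quasi-Polish) product space, I obtain a new stochastic basis $(\overline{\Omega},\overline{\mathscr{F}},\{\overline{\mathscr{F}}_t\}_{t\in[0,T]},\overline{\P})$, random variables $(\overline{\uu}_n,\overline{\W}_n,\overline{\uu}_0^{\,n},\overline{\f}_n)$ having the same laws as $(\uu_n,\W_n,\uu_0^n,\f)$, and limits $(\overline{\uu},\overline{\W},\overline{\uu}_0,\overline{\f})$ with $\overline{\P}$-a.s.\ convergence in $\mathcal{Z}\times \C([0,T];\bfU_0)\times \bfV\times \bfL^2(\1_T)$. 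The filtration is built as the augmentation of $\sigma(\overline{\uu}(s),\overline{\W}(s),\overline{\f}(s):s\leq t)$, which is right-continuous. Passing to the limit $n\to\infty$ in the Galerkin formulation tested against $\bfi\in\C_0^\infty(\1)^d$ with $\operatorname{div}\bfi=0$: the linear terms $(\overline{\uu}_n,\bfi)$ and $\kappa(\nabla\overline{\uu}_n,\nabla\bfi)$ pass by $\C([0,T];\bfH)$-convergence; the convection term $\overline{\uu}_n\otimes\overline{\uu}_n:\nabla\bfi$ passes because strong $\C([0,T];\bfH)$-convergence together with the $\L^\infty(0,T;\bfV)\subset\L^\infty(0,T;\L^{\frac{2d}{d-2}})$ bound yields strong convergence in $\L^2(0,T;\L^2(\1)^{d\times d})$ (recall $d\leq 4$ is not needed here at the $\alpha$-level); the stochastic integral converges by the Debussche-Glatt-Hofmanov\'a-type martingale identification lemma, using \eqref{3.6a} and a.s.\ convergence of $\Phi(\overline{\uu}_n)$ in $\L^2(0,T;\mathcal{L}_2)$.

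\textbf{Step 3: Identification of the monotone nonlinearities.} This is the main obstacle. Let $\overline{\bfA}$, $\overline{\bfa}$ denote the weak limits in $\L^{p'}(0,T;\L^{p'}(\1)^{d\times d})$ and $\L^{q'}(0,T;\L^{q'}(\1)^d)$ of $\bfA(\overline{\uu}_n)$ and $\bfa(\overline{\uu}_n)$ respectively; the limit equation then reads with $\overline{\bfA},\overline{\bfa}$ in place of $\bfA(\overline{\uu}),\bfa(\overline{\uu})$. To identify $\overline{\bfA}=\bfA(\overline{\uu})$ and $\overline{\bfa}=\bfa(\overline{\uu})$ I would use Minty's trick adapted to the stochastic setting. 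Applying It\^o's formula to $\|(\I-\kappa\Delta)^{1/2}\overline{\uu}(\cdot)\|_2^2$ on the limit equation and comparing with the weak limit (using lower semicontinuity and the a.s.\ convergence $\overline{\uu}_n(t)\to \overline{\uu}(t)$ in $\bfH$, together with the fact that the quadratic variation of the noise converges by continuity of $\Phi$) yields, $\overline{\P}$-a.s.,
\[
\liminf_{n\to\infty}\int_0^t \!\Big(\!\nu\langle\bfA(\overline{\uu}_n),\bfD(\overline{\uu}_n)\rangle + \alpha(\bfa(\overline{\uu}_n),\overline{\uu}_n)\!\Big)ds \geq \int_0^t \!\Big(\!\nu\langle\overline{\bfA},\bfD(\overline{\uu})\rangle + \alpha(\overline{\bfa},\overline{\uu})\!\Big)ds.
\]
Combined with the monotonicity inequalities \eqref{2.4} for $\bfA$ and the obvious monotonicity of $\bfa$, testing against $\bfA(\bfv)-\bfA(\overline{\uu})$ and $\bfa(\bfv)-\bfa(\overline{\uu})$ for arbitrary $\bfv$, the standard Minty-Browder argument then closes the identification.

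\textbf{Step 4: Energy estimate and initial condition.} The bound \eqref{UE2} follows from \eqref{5.9} by equidistribution of laws and weak/weak-$*$ lower semicontinuity of the norms under the Skorokhod limit (together with Fatou for the expectation), the constant being inherited from Theorem~\ref{thrmUE}. Finally, the matching of laws $\Lambda_0=\overline{\P}\circ\overline{\uu}_0^{-1}$ and $\Lambda_{\overline{\f}}=\overline{\P}\circ\overline{\f}^{\,-1}$ is automatic from the construction, and continuity of $t\mapsto \overline{\uu}(t)$ into $\bfV$ in the strong topology is recovered a posteriori from the equation itself since all drift terms lie in $\L^1(0,T;\bfV^{-1})\cap \L^{p'}(0,T;\W^{-1,p'})$ and the stochastic integral is in $\C([0,T];\bfH)$, using again that $(\I-\kappa\Delta)^{-1}$ is an isomorphism onto $\bfV$.
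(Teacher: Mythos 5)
Your proposal follows the same route as the paper: uniform bounds from Theorem~\ref{thrmUE}, fractional-in-time estimates to close an Aubin--Lions type compact embedding, Skorokhod representation to obtain a new stochastic basis, martingale identification, and Minty's trick combined with It\^o's formula on $\|(\I-\kappa\Delta)^{1/2}\overline{\uu}(\cdot)\|_2^2$ for the monotone nonlinearities.

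Three remarks on how your execution differs, none of which is fatal. First, you assert uniform H\"older bounds $\uu_n\in\C^{\theta}([0,T];\bfH)$ by inverting $(\I-\kappa\Delta)$; but after inversion the drift part lives only in $\W^{1,q_0}(\1)^d$ with $q_0:=\min\{p',q'\}$, which embeds into $\L^2(\1)^d$ by Sobolev only when $q_0\geq\frac{2d}{d+2}$, a condition that fails for small $p$ (recall Theorem~\ref{thrmUE2} is claimed for \emph{all} $p\in(1,\infty)$). The claim can be rescued by interpolating against the $\L^\infty(0,T;\bfV)$ bound, but this step must be said; the paper instead contents itself with the weaker estimate $\E\big[\|\uu_n\|_{\W^{\eta,q_0}(0,T;\W_\sigma^{2-\tilde s,q_0}(\1)^d)}\big]\leq C$ of \eqref{5.019} and uses the Flandoli--G\c{a}tarek compactness lemma to get tightness only in $\L^\rho(0,T;\L_\sigma^\rho(\1)^d)$ with $\min\{p',q'\}<\rho<\min\{2^\ast,p^\ast,q\}$, which already suffices. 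Second, the path space at this stage is Polish, so the paper applies the classical Skorokhod theorem here and reserves the Jakubowski version for the pressure-carrying, weak-topology path space of Theorem~\ref{thm:exist}; your invocation of Jakubowski is correct but heavier than needed. Third, in your Step~2 the strong convergence of $\overline{\uu}_n\otimes\overline{\uu}_n$ is driven by the $\L^q$-bound coming from the stabilization (this is precisely why $q\geq\max\{2p',3\}$ is imposed in \eqref{hyp:q}), not by the $\L^\infty(0,T;\L^{\frac{2d}{d-2}})$ bound alone, which is borderline for $d=4$; your parenthetical remark should be rephrased accordingly.
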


\begin{proof}
For the sake of better comprehension, the proof of Theorem~\ref{thrmUE2} shall be split through the following sections:
\subsection{Weak convergence}\label{WC}
		In view of (\ref{5.9}), the Banach-Alaoglu theorem gives the existence of functions $\uu$, $\w$, $\bfS$ and $\Psi$ such that, {for some subsequences still labeled by the same subscript,}
		\begin{alignat}{2}
			\label{5.10}
			\uu_n&\xrightharpoonup[n\to\infty]{} \uu,\quad && \text{in}\quad \L^2\big({\Omega,\mathscr{F},\P};\L^\infty(0,T;\bfV)\big),  \\
			\label{5.11}
			\uu_n&\xrightharpoonup[n\to\infty]{} \uu,\quad  &&\text{in}\quad \L^p\big({\Omega,\mathscr{F},\P};\L^{p}(0,T;{\W_0^{1,p}(\1)^d}\big),  \\
				\uu_n &\xrightharpoonup[n\to\infty]{} \uu,\quad && \text{in} \quad 	\L^q\big({\Omega,\mathscr{F},\P};\L^{q}(0,T;{\L^q(\1)^d})\big),	\\
		\bfa(\uu_n)&\xrightharpoonup[n\to\infty]{} \bfa,\quad && \text{in} \quad  \L^{q'}\big({\Omega,\mathscr{F},\P};\L^{q'}(0,T;{\L^{q'}(\1)^d})\big), \\
				\label{5.12}
			\uu_n\otimes\uu_n&\xrightharpoonup[n\to\infty]{} \w,\quad  &&\text{in}\quad {\L^{\frac{q}{2}}\big({\Omega,\mathscr{F},\P};\L^{\frac{q}{2}}(0,T;\L^{\frac{q}{2}}(\1)^{d\times d}))}, \\
			\label{5.012}
			\bfA(\uu_n)&\xrightharpoonup[n\to\infty]{} \bfS,\quad && \text{in}\quad \L^{p'}\big({\Omega,\mathscr{F},\P};\L^{p'}(0,T;{\L^{p'}(\1)^{d\times d}})\big),
			 \\
			\label{5.13}
			\bfA(\uu_n)&\xrightharpoonup[n\to\infty]{} \bfS,\quad  &&\text{in}\quad \L^{p'}\big({\Omega,\mathscr{F},\P};\L^{p'}(0,T;{\W^{-1,p'}(\1)^{d\times d}})\big),\\
			\label{5.14}
			\Phi(\uu_n)&\xrightharpoonup[n\to\infty]{} \Psi,\quad && \text{in}\quad \L^{2}\big({\Omega,\mathscr{F},\P};\L^{2}(0,T;{\mathcal{L}_2(\bfU,\L^2(\1)^d)})\big).
		\end{alignat}Our aim is to establish {that}
	\begin{align*}		\w = \uu\otimes \uu, \ \ \bfS=\A(\uu), \ \text{ and }\  \Psi =\Phi(\uu).
	\end{align*}
		\subsection{Compactness}
		From the Subsection \ref{WC}, we have the weak {convergence results} \eqref{5.10}-\eqref{5.14}, which are not enough to pass {to} the limit in the nonlinear terms and {in} the noise coefficient appearing in our model. In order to pass these terms to the limit, we need some compactness arguments.
		
		We test (\ref{5.2}) with $\bphi\in\bVcal$, so that $\P-$a.s.,
		\begin{align}\nonumber
			\label{5.15}%
			&
			\int_\1\uu_n(t)\cdot\bphi\,\d\x+\kappa\int_\1\nabla\uu_n(t):\nabla\bphi\,\d\x
				\equiv
				\int_\1\uu_n(t)\cdot {P^n_s}(\bphi)\,\d\x+\kappa\int_\1\nabla\uu_n(t):\nabla P^n_s(\bphi)\,\d\x \\
				&\nonumber=
				\int_\1\uu_0\cdot P^n_s(\bphi)\,\d\x+\kappa\int_\1\nabla\uu_0(t):\nabla P^n_s(\bphi)\,\d\x +
			\int_0^t\int_\1\bfG_n:\nabla P^n_s(\bphi)\,\d\x\d s
				\\
				&\quad+
			\int_0^t\int_\1\Phi(\uu_n)\d\W_n(s)\cdot P^n_s(\bphi)\,\d\x,
				\end{align}
		where $P^n_s$ denotes the projection into the $n-$dimensional space $\bfX^{n}$ with respect to the {$\bfV^s$} inner product, and
		\begin{align}\label{5.16}
			\bfG_n:=\uu_n\otimes\uu_n+\nabla \Delta^{-1}\bfa(\uu_n)
			-\nu\bfA(\uu_n)+\bfF,
		\end{align}
		with $\bfF$ chosen in {$\L^2(0,T;\W^{1,2}(\1)^{d\times d})$ in such a way} that $\operatorname{div} \bfF=-\f$ in the weak sense.

\begin{claim}\label{claim:1}
$\w=\uu\otimes\uu$ and $\Psi=\Phi(\uu)$.
\end{claim}		
\begin{proof}[Proof of Claim~\ref{claim:1}]
Using the energy estimate obtained in \eqref{5.9} and the definitions of {$\bfA$ and $\bfa$ (see \eqref{op:A(u)} and \eqref{ref:a(u)}), and observing \eqref{hyp:q},}  we find
		\begin{align}\label{5.17}
			\bfG_n\in \L^{q_0}({\Omega,\mathscr{F},\P};\L^{q_0}(0,T;{\L^{q_0}(\1)^{d\times d}})),\qquad {q_0:=\min\left\{p',q'\right\}>1},
		\end{align}
uniformly in $n$.
{Note that in fact it should be $q_0:=\min\left\{p',q',\frac{q}{2}\right\}>1$, but once that $q\geq 3$ (see assumption \eqref{hyp:q}), we have $q_0=\min\left\{p',q'\right\}$.}
Let us define the functional
		\begin{align*}
			\mathcal{H}(t,\bphi):=\int_0^t\int_\1\bfG_n:\nabla P^n_s(\bphi)\,\d\x\d s,\qquad {\bphi\in\bVcal}.
		\end{align*}
As  $1+\frac{2}{q_0}>\frac{1}{q_0}-\frac{1}{2}$ implies the embedding $\W^{\tilde{s},q_0}(\1)\hookrightarrow\W^{s,2}(\1)$ for $\tilde{s}\geq s+d\big(1+\frac{2}{q_0}\big)$, we can use \eqref{5.17} to show that
		\begin{align}\label{IP}
			\E\bigg[\|\mathcal{H}\|_{\W^{1,q_0}(0,T;\W_{\sigma}^{-\tilde{s},q_0}(\1))}\bigg]\leq C.
		\end{align}
The above estimate can be justified as follows (cf. \cite[Section 4]{DBFG}):

	\begin{align*}
& \bigg\| \frac{\d}{\d t }\mathcal{H} (t,\cdot)\bigg\|_{\L^{q_0}(0,T;\W_{\sigma}^{-\tilde{s},q_0}(\1))}  \\
& = \bigg\|\sup_{\|\bphi\|_{\tilde{s},q_0'}\leq 1}\frac{\d }{\d t} \mathcal{H} (t,\bphi)\bigg\|_{\L^{q_0}(0,T)} =
\bigg\|\sup_{\|\bphi\|_{\tilde{s},q_0'}\leq 1}\int_{\1}\bfG_n(t):\nabla P_{\tilde{s}}^n(\bphi)\,\d\x\bigg \|_{\L^{q_0}(0,T)}  \\
&	\leq	\bigg\|\sup_{\|\bphi\|_{\tilde{s},q_0'}\leq 1}\| \bfG_n(t)\|_{q_0}\|\nabla P_{\tilde{s}}^n(\bphi)\|_{q_0'}\bigg \|_{\L^{q_0}(0,T)}
		\leq C\bigg(\int_0^T\| \bfG_n(t)\|_{q_0}^{q_0}\d t \bigg)^{\frac{1}{q_0}}.
	\end{align*}
For all $0\leq s<t\leq T$, we have
\begin{align*}
&	\E\bigg[\bigg\|\int_0^t\Phi (\uu_n(\ell))\d \W_n(\ell)-\int_0^s\Phi (\uu_n(\ell))\d \W_n(\ell) \bigg\|_{2}^{q}\bigg]  \\
&= \E\bigg[\bigg\|\int_s^t\Phi(\uu_n(\ell))\d\W_n(\ell)\bigg\|_{2}^{q}\bigg]
=\E\bigg[\bigg\|\int_s^t\sum_{j=1}^{\infty}\Phi(\uu_n(\ell))\bfe_i\d\beta_j^n\bigg\|_{2}^{q}\bigg]  \\
&\leq C\E\bigg[\bigg(\int_s^t\sum_{j=1}^{\infty}\|\phi_k(\uu_n(\ell))\|^2_{2}\d \ell \bigg)^{\frac{q}{2}}\bigg]\leq
C|t-s|^{\frac{q}{2}}\bigg\{\E\bigg[\sup_{\ell \in[0,T]}\big\{1+\|\uu_n(\ell)\|^2_{2}\big\}\bigg]\bigg\}^{\frac{q}{2}}
\leq C|t-s|^{\frac{q}{2}},
\end{align*}
where we have used the BDG and Young's inequalities and the energy estimate \eqref{5.9}.
By the Kolmogorov continuity criterion, there exists a modification which has $\P-$a.s. Hölder continuous paths such that
\begin{align}\label{STphi1}
			\E\bigg[\bigg\|\int_{0}^{t}\Phi(\uu_n(s))\d\W_n(s)\bigg\|_{\C^\mu([0,T];\L^2(\1)^d)}\bigg] \leq C,\quad \mu:=\theta-\frac{1}{q},
\end{align}
for $\frac{1}{q}<\theta<\frac{1}{2}$ if $q>2$, which is the case due to assumption \eqref{hyp:q}.
Observing that $\W_0^{\tilde{s},q_0'}(\1)^d\hookrightarrow \W_{0}^{1,2}(\1)^d\hookrightarrow\L^2(\1)^d$,  $1<q_0<\infty$, implies $\L^2(\1)^d\hookrightarrow\W^{-\tilde{s},q_0}(\1)^d$, by the above inequality, we have
	\begin{align}\label{Phi1}	\E\bigg[\bigg\|\int_{0}^{t}\Phi(\uu_n(s))\d\W_n(s)\bigg\|_{\C^\mu([0,T];\W_{0}^{-\tilde{s},q_0}(\1)^d)}\bigg] \leq C.
		\end{align}
	 Collecting the {information from \eqref{IP} and \eqref{Phi1}} in \eqref{5.15}, and still using \eqref{3.4}, we arrive at
		\begin{align*}
			\E\left[\|(\I-\kappa\Delta)\uu_n\|_{\C^\mu([0,T];\W_{\sigma}^{-\tilde{s},q_0}(\1)^d)}\right]\leq C,
		\end{align*}
{for some positive constant $C$ that does not depend on $n$.
This in turn implies  for some $\eta>0$ that
		\begin{align}\label{5.19}
			\E\left[\|(\I-\kappa\Delta)\uu_n\|_{\W^{\eta,q_0}(0,T;\W_{\sigma}^{-\tilde{s},q_0}(\1)^d)}\right]\leq C.
		\end{align}From \eqref{5.19}, we conclude that
		\begin{align}\label{5.019}
		\E\left[\|\uu_n\|_{\W^{\eta,q_0}(0,T;\W_{\sigma}^{2-\tilde{s},q_0}(\1)^d)}\right]\leq C.
	\end{align}Observing that $\tilde{s}\geq s+d\big(1+\frac{2}{q_0}\big)$ implies $2-\tilde{s}<0$.
	
Let
\begin{equation*}
\bfZ_{p,q}:=\L^\infty(0,T;\bfV)\cap \L^p(0,T;\W^{1,p}_0(\1)^d)\cap \L^q(0,T;\L^q(\1)^d).
\end{equation*}
By the Sobolev embedding theorem, we infer
\begin{equation*}
\bfZ_{p,q}\hookrightarrow\L^\infty(0,T;\L_{\sigma}^{\rho}(\1)^d)\cap \L^p(0,T;\W^{1,p}_0(\1)^d),\qquad \rho\leq \min\left\{2^\ast,q\right\}.
\end{equation*}
By a version of the Aubin-Lions compactness lemma (see~\cite[Theorem 2.1]{FFDG}), we further have
\begin{equation*}
\W^{\eta,q_0}(0,T;\W_{\sigma}^{2-\tilde{s},q_0}(\1)^d)\cap\L^\infty(0,T;\L_{\sigma}^{\rho}(\1)^d)\cap \L^p(0,T;\W^{1,p}_0(\1)^d)\hookrightarrow\hookrightarrow \L^\rho(0,T;\L_{\sigma}^\rho(\1)^d),
\end{equation*}
for $q_0\leq \rho <p^\ast$.
In view of this, and attending to the definition of $q_0$ in \eqref{5.17}, one has
		\begin{align}\label{5.20}
			\W^{\eta,q_0}(0,T;\W_{\sigma}^{2-\tilde{s},q_0}(\1)^d)\cap \bfZ_{p,q} \hookrightarrow\hookrightarrow \L^\rho(0,T;\L_{\sigma}^\rho(\1)^d),
		\end{align}
for
\begin{equation}\label{rho,p,q}
\min\left\{p',q'\right\}<\rho<\min\left\{2^\ast,p^\ast,q\right\},\qquad p>\frac{2d}{d+2}.
\end{equation}
}
Now, we define {the space
		\begin{align*}
			\mathfrak{V}:=\L^\rho(0,T;\L_{\sigma}^\rho(\1)^d)\otimes \C([0,T];\bfU_0)\otimes \bfV\otimes \bfL^2(\1_T).
		\end{align*}
In the} sequel, we use the following notations:
		\begin{enumerate}
			\item $\varrho_{\uu_n}$ denotes the law of $\uu_n$ on {$\L^\rho(0,T;\L_{\sigma}^\rho(\1)^d)$};
			\item $\varrho_{\W}$ denotes the law of $\W$ on $\C([0,T];\bfU_0)$;
			\item $\varrho_n$ denotes the joint law of $\uu_n,\W,\uu_0$ and $\f$ on the space $\mathfrak{V}$.
		\end{enumerate}
Let us consider a ball $B_N$ in the space $\W^{\eta,q_0}([0,T];\W_{\sigma}^{2-\tilde{s},q_0}(\1)^d)\cap \bfZ_{p,q}$ and denote its complement by $B_N^c$. Using the uniform estimates  \eqref{5.9} and \eqref{5.019}, we find
		\begin{align*}
				\varrho_{\uu_n}(B_N^c)&=\P\left(\|\uu_n\|_{\W^{\eta,q_0}(0,T;\W_{\sigma}^{2-\tilde{s},q_0}(\1)^d)}+\|\uu_n\|_{\bfZ_{p,q}}\geq N\right)\\& \leq \frac{1}{N}\E\left[\|\uu_n\|_{\W^{\eta,q_0}(0,T;\W_{\sigma}^{2-\tilde{s},q_0}(\1)^d)}+\|\uu_n\|_{\bfZ_{p,q}}\right] \leq \frac{C}{N}.
				\end{align*}
For any fixed $\xi>0$, we can find $N(\xi)$ such that
		\begin{align*}
			\varrho_{\uu_n}(B_{N(\xi)})\geq 1-\frac{\xi}{4}.
		\end{align*}
Since, the law $\varrho_{\W}$ is tight as being a Radon measure on the Polish space $\C([0,T];\bfU_0)$, then there exists a compact subset $K_\xi\subset \C([0,T];\bfU_0)$ such that $	\varrho_{\uu_n}(K_\xi)\geq 1-\frac{\xi}{4}$.
By the same reasoning,  we are able {to} find compact subsets of $\bfV$ and $\bfL^2(\1_T)$, such that their measures $\Lambda_0$ and $\Lambda_{\f}$ are greater than $1-\frac{\xi}{4}$.  Therefore, we can find a compact subset $\mathfrak{V}_\xi\subset \mathfrak{V}$ such that $\varrho_n(\mathfrak{V}_\xi)\geq 1-\xi$. Hence, $\{\varrho_n\}_{n\in\N}$ is tight in the same space. An application of Prokhorov's theorem (see  \cite[Theorem 2.6]{NISW}) yields $\varrho_n$ is relatively weakly compact, which implies that $\varrho_n$ has a weakly convergent subsequence with  weak limit $\varrho$.
		\subsection{Existence of a probability space $(\overline{\Omega},\overline{\mathscr{F}},\overline{\P})$}
		
		Applying Skorohod's representation theorem (see  \cite[Theorem 2.7]{NISW}) to ensure the existence of another probability space $(\overline{\Omega},\overline{\mathscr{F}},\overline{\P})$, a random sequence $(\overline{\uu}_n,\overline{\W}_n,\overline{\uu}_0^n,\overline{\f}_n)$ and a random variable $(\overline{\uu},\overline{\W},\overline{\uu}_0,\overline{\f})$ on the probability space $(\overline{\Omega},\overline{\mathscr{F}},\overline{\P})$, taking values in $\mathfrak{V}$ such that the following holds:
		\begin{enumerate}
			\item The laws of sequence of random variables $(\overline{\uu}_n,\overline{\W}_n,\overline{\uu}_0^n,\overline{\f}_n)$  and the random variable $(\overline{\uu},\overline{\W},\overline{\uu}_0,\overline{\f})$ under the new probability measure $\overline{\P}$ coincide with $\varrho_n$ and $\varrho:=\lim\limits_{n\to\infty}\varrho_n$, respectively;
			\item {The} following convergence {results hold true
			\begin{alignat*}{2}
				\overline{\uu}_n & \xrightarrow[n\to\infty]{} \overline{\uu},\quad &&\text{in} \quad  \L^\rho(0,T;\L_{\sigma}^{\rho}(\1)^d), \\
				\overline{\W}_n & \xrightarrow[n\to\infty]{} \overline{\W}, \quad&&\text{in} \quad  \C([0,T];\bfU_0),\\
				\overline{\uu}_0^n & \xrightarrow[n\to\infty]{}\overline{\uu}_0, \quad &&\text{in}\quad \bfV,\\
				\overline{\f}_n & \xrightarrow[n\to\infty]{}\overline{\f}, \quad &&\text{in} \quad \L^2(0,T;\L^2(\1)^d),
			\end{alignat*}
}$\overline{\P}-$a.s.;
			\item {The convergence results \eqref{5.11} and \eqref{5.12}} still hold for the functions defined on the newly constructed probability space $(\overline{\Omega},\overline{\mathscr{F}},\overline{\P})$. Moreover, for any finite $\beta,$ we have
			\begin{align*}
			\overline{\E}\bigg[\sup_{t\in[0,T]}\|\overline{\W}_n(t)\|_{\bfU_0}^\beta\bigg] = \E\bigg[\sup_{t\in[0,T]}\|\overline{\W}(t)\|_{\bfU_0}^\beta\bigg].
			\end{align*}
		\end{enumerate}Applying Vitali's convergence theorem, {we have for some subsequences still labeled by the same subscript}
		\begin{alignat}{2}
			\label{5.21}
			\overline{\W}_n&\xrightarrow[n\to \infty]{} \overline{\W}, \quad &&\text{in}\quad \L^2({\overline{\Omega},\overline{\mathscr{F}},\overline{\P}};\C([0,T];\bfU_0)),\\
			\label{5.22}
			\overline{\uu}_n&\xrightarrow[n\to \infty]{}  \overline{\uu},\quad  &&\text{in}\quad  {\L^\rho({\overline{\Omega},\overline{\mathscr{F}},\overline{\P}};\L^\rho(0,T;\L_{\sigma}^\rho(\1)^d))}, \\
			\label{5.23}
			\overline{\uu}_0^n&\xrightarrow[n\to \infty]{} \overline{\uu}_0, \quad  &&\text{in}\quad  \L^2({\overline{\Omega},\overline{\mathscr{F}},\overline{\P}};\bfV),  \\
			\label{5.24}
			\overline{\f}_n&\xrightarrow[n\to \infty]{}\overline{\f},\quad  &&\text{in}\quad  \L^2({\overline{\Omega},\overline{\mathscr{F}},\overline{\P}};\L^2(0,T;{\L^2(\1)^d})),
		\end{alignat}
for {$\rho$ and $p$ in the conditions of \eqref{rho,p,q}.}
Now, we need to define the filtration on the newly constructed  probability space.
First, we define an operator of restriction to the interval $[0,T]$ denoted by $\bfh_t$  acting on various path spaces.
More precisely, let $\bfX$ denote any of the {spaces $\L^\rho(0,T;\L^\rho(\1)^d)$, $\L^2(0,T;\L^2(\1)^d)$, or $\C([0,T];\bfU_0)$,} and for $t\in[0,T]$, we define
		\begin{align}\label{5.25}
			\bfh_t:\bfX\to\bfX \big|_{[0,t]}, \quad g \mapsto g\big|_{[0,t]}.
		\end{align}
The mapping ${\bf h}_t$ is continuous.
Let us define {the} $\overline{\P}-$augmented canonical {filtration} by $\{\overline{\mathscr{F}}_t\}_{t\in[0,T]}$ of the process $(\overline{\uu},\overline{\f},\overline{\W})$, that is,
		\begin{align*}
			\overline{\mathscr{F}}_t=\sigma\big(\sigma(\bfh_t\overline{\uu},\bfh_t\overline{\f},\bfh_t\overline{\W})\big)\cup\{M\in \overline{\mathscr{F}};\overline{\P}(M)=0\}), \quad t\in[0,T].
		\end{align*}Next, we are going to show that our approximate equations also {hold} in the new probability space.
		
		\subsection{Validity of the approximate equations in $(\overline{\Omega},\overline{\mathscr{F}},\overline{\P})$} The method we employ in this subsection has been already discussed in several works {(see for e.g. \cite{Breit,MH,MO})}. The main aim of this method is to identify the quadratic variation of the martingale  as well as cross variation with the limit Wiener process obtained through compactness. Note that the laws of $\overline{\W}_n$ and $\overline{\W}$ are {the} same. As a consequence of {the} new probability space, we can find a collection of mutually independent real-valued $\{\overline{\mathscr{F}}_t\}_{t\in[0,T]}-$Wiener processes $\{\overline{\beta}_j^n\}_{j\in\N}$ such that $\overline{\W}_n=\sum\limits_{j\in\N}\bfe_j\overline{\beta}_j^n$, that is, there exists a collection of mutually independent real-valued $\{\overline{\mathscr{F}}_t\}_{t\in[0,T]}-$Wiener processes $\{\overline{\beta}_j\}_{j\in\N}$ such that $\overline{\W}=\sum\limits_{j\in\N}\bfe_j\overline{\beta}_j$. We used the abbreviation $\overline{\W}_{n,n}$ for $\sum\limits_{j=1}^{n}\bfe_j\overline{\beta}_j^n$.
Let us consider $\bphi\in {\bVcal}$ and define the functionals for $t\in[0,T]$
		\begin{align*}
				N(\uu_n,\uu_0,\f)_t&=\int_\1\uu_n(t)\cdot\bphi\,\d\x+\kappa\int_\1\nabla\uu_n(t):\nabla\bphi\,\d\x-\int_\1\uu_0^n\cdot\bphi\,\d\x-\kappa\int_\1\nabla\uu_0^n:\nabla\bphi\,\d\x\\
&\quad+\int_0^t\int_\1\f\cdot P^n(\bphi)\,\d\x\d s +\int_0^t\int_\1 \uu_n\otimes\uu_n:\nabla P^n(\bphi) \,\d\x\d s\\&\quad+ \nu\int_0^t\int_\1 \bfA(\uu_n):\bfD(P^n(\bphi))\,\d\x\d s,\\
				M(\uu_n)_t&=\sum_{j=1}^n\int_{0}^{t}\bigg(\int_\1\phi_j(\uu_n)\cdot P^n(\bphi)\,\d\x\bigg)^2\d s,\\
				M_j(\uu_n)_t&=\int_{0}^{t}\int_\1\phi_j(\uu_n)\cdot P^n(\bphi)\,\d\x\d s.
				\end{align*}Let us denote the increment $N(\uu_n,\uu_0,\f)_t-N(\uu_n,\uu_0,\f)_s$ by $N(\uu_n,\uu_0,\f)_{s,t}$ and similarly for $M(\uu_n)_{s,t}$ and $M_j(\uu_n)_{s,t}$. Observe that our proof will be over once we prove that the process $N(\overline{\uu}_n)$ is an $\{\overline{\mathscr{F}}_t\}_{t\in[0,T]}-$martingale and its quadratic and cross variations satisfy
		\begin{align}\label{5.26}
			\langle N(\overline{\uu}_n,\overline{\uu}_0,\overline{\f})\rangle=M(\overline{\uu}_n), \quad\text{ and  } \quad\langle N(\overline{\uu}_n,\overline{\uu}_0,\overline{\f}),\overline{\beta}_j\rangle=M_j(\overline{\uu}_n),
		\end{align}respectively. In that situation, we have
		\begin{align}\label{5.27}
			\bigg\langle N(\overline{\uu}_n,\overline{\uu}_0,\overline{\f})-\int_{0}^{\cdot}\int_\1\Phi(\overline{\uu}_n)\d\overline{\W}_{n,n}\cdot P^n(\bphi)\,\d\x \bigg\rangle=0,
		\end{align}which implies the required result in the new probability space. Let us establish \eqref{5.26}.
To finish this proof, we use the uniform estimate and we claim that the mappings
		\begin{align*}
			(\uu_n,\uu_0,\f)\mapsto N(\uu_n,\uu_0,\f)_t,\quad \uu_n\mapsto M(\uu_n)_t, \quad \text{and}\quad \uu_n\mapsto M_j(\uu_n)_t,
		\end{align*}
are well defined and measurable on a subspace of the path space, where the joint law of $(\overline{\uu}_n,\overline{\uu}_0,\overline{\f})$ is supported, that is, {the uniform estimate \eqref{5.9} of Theorem~\ref{thrmUE} holds}.
In the case of $M(\uu_n)_t,$ we have by \eqref{3.6a} and the continuity of $P^n$ in the space {$\L^2(\1)^d$}
		\begin{align*}
				\sum_{j=1}^n\int_{0}^{t}\bigg(\int_\1\phi_j(\uu_n)\cdot P^n(\bphi)\,\d\x \bigg)^2\d s&\leq C(\bphi)	\sum_{j=1}^{n}\int_{0}^{t}\int_\1|\phi_j(\uu_n)|^2\,\d\x\d s\\&
				\leq C(\bphi,K)\bigg(T+\int_0^{T}\|\uu_n(t)\|^2_{2}\d t\bigg),
				\end{align*}
which is finite {in view of the} uniform estimate \eqref{5.9}.
The rest two mappings $N(\uu_n,\uu_0,\f)_t$ and $M_j(\uu_n)_t$ can be tackled in the {same manner. Therefore} the following random variables have the same laws:
		\begin{align*}
				N(\uu_n,\uu_0,\f) &\sim N(\overline{\uu}_n,\overline{\uu}_0,\overline{\f}),\\
				M(\uu_n)&\sim M(\overline{\uu}_n),\\
				M_j(\uu_n)&\sim M_j(\overline{\uu}_n).
		\end{align*}For any fixed times $s,t\in[0,T]$ with $s<t$, let us define a continuous function by
		\begin{align*}
			h:\mathfrak{V}\big|_{[0,s]}\to [0,1].
		\end{align*} Since
		\begin{align*}
			N(\uu_n,\uu_0,\f)_t=\int_{0}^{t}\int_\1\Phi(\uu_n)\d\W_n(s)\cdot P^n(\bphi)\,\d\x=\sum_{j=1}^{n}\int_{0}^{t}\int_\1\phi_j(\uu_n)\d\beta_j\cdot P^n(\bphi)\,\d\x,
		\end{align*}is a square integrable $\{\mathscr{F}_t\}-$martingale, we conclude that
		\begin{align*}
			\big[N(\uu_n,\uu_0,\f)\big]^2-M(\uu_n),\quad \text{and}\quad N(\uu_n)\beta_j-M_j(\uu_n),
		\end{align*}are $\{\mathscr{F}_t\}-$martingales. Again, consider the restriction of a function to the interval $[0,s]$ which is denoted by $\bfh_s$. Due to the equality of laws, we find
		\begin{align*}			
				\overline{\E}&\bigg[h\big(\bfh_s\overline{\uu}_n,\bfh_s\overline{\W}_n,\bfh_s\overline{\f},\overline{\uu}_0\big) N(\overline{\uu}_n,\overline{\uu}_0,\overline{\f})_{s,t}\bigg]\\&=	\E\bigg[h\big(\bfh_s\uu_n,\bfh_s\W,\bfh_s\f,\uu_0\big) N(\uu_n,\uu_0,\f)_{s,t}\bigg]=\text{0},\\
				\overline{\E}&\bigg[h\big(\bfh_s\overline{\uu}_n,\bfh_s\overline{\W}_n,\bfh_s\overline{\f},\overline{\uu}_0\big) \big(\big[N(\overline{\uu}_n,\overline{\uu}_0,\overline{\f})\big]_{s,t}^2-M(\overline{\uu}_n)_{s,t}\big)\bigg]\\&=	\E\bigg[h\big(\bfh_s\uu_n,\bfh_s\W,\bfh_s\f,\uu_0\big) \big(\big[N(\uu_n,\uu_0,\f)\big]_{s,t}^2-M(\uu_n)_{s,t}\big)\bigg]=\text{0},\\
				\overline{\E}&\bigg[h\big(\bfh_s\overline{\uu}_n,\bfh_s\overline{\W}_n,\bfh_s\overline{\f},\overline{\uu}_0\big) \big(\big[N(\overline{\uu}_n,\overline{\uu}_0,\overline{\f})\overline{\beta}_j^n\big]_{s,t}-M_j(\overline{\uu}_n)_{s,t}\big)\bigg]\\&=	\E\bigg[h\big(\bfh_s\uu_n,\bfh_s\W,\bfh_s\f,\uu_0\big) \big(\big[N(\uu_n,\uu_0,\f)\beta_j\big]_{s,t}-M_j(\uu_n)_{s,t}\big)\bigg]=\text{0}.
				\end{align*}
Thus, we proved  {\eqref{5.26} and hence} \eqref{5.27}, which implies that on the new probability space $(\overline{\Omega},\overline{\mathscr{F}},\overline{\P})$, we have the equations for $j=1,\ldots,n,$
		\begin{align}\label{5.28}\nonumber
			\int_\1 \d&\overline{\uu}_n\cdot\bpsi_j\,\d\x+\kappa\int_\1\d \nabla \overline{\uu}_n:\nabla \bpsi_j\,\d\x +\nu\int_\1\bfA(\overline{\uu}_n):\bfD(\bpsi_j)\,\d\x\d t+\alpha \int_\1\bfa (\overline{\uu}_n)\cdot \bpsi_j\,\d\x\d t
			\\&=\int_\1 \overline{\uu}_n\otimes\overline{\uu}_n:\nabla\bpsi_j\,\d\x\d t+\int_\1\overline{\f}\cdot\bpsi_j \,\d\x\d t+\int_\1\Phi(\overline{\uu}_n)\d\overline{\W}_{n,n}\cdot\bpsi_j \,\d\x,\\
				\overline{\uu}_n(0)&=P^n\overline{\uu}_0,
				\end{align}
and the following {convergence results}:
		\begin{alignat}{2}
		\label{5.010}
		\overline{\uu}_n&\xrightharpoonup[n\to\infty]{} \overline{\uu},\quad &&\text{in}\quad \L^2\big({\overline{\Omega},\overline{\mathscr{F}},\overline{\P}};\L^\infty(0,T;\bfV)\big),  \\
		\label{5.0011}
		\overline{\uu}_n&\xrightharpoonup[n\to\infty]{} \overline{\uu},\quad &&\text{in}\quad \L^p\big({\overline{\Omega},\overline{\mathscr{F}},\overline{\P}};\L^{p}(0,T;{\W_0^{1,p}(\1)^d}\big),  \\
			\label{5.00011}	\overline{\uu}_n &\xrightharpoonup[n\to\infty]{} \overline{\uu},\quad && \text{in}\quad	\L^q\big({\overline{\Omega},\overline{\mathscr{F}},\overline{\P}};\L^{q}(0,T;{\L^q(\1)^d})\big),	\\
		\label{5.000011}	\bfa(\overline{\uu}_n)&\xrightharpoonup[n\to\infty]{} \bfa(\overline{\uu}),\quad &&\text{in}\quad \L^{q'}\big({\overline{\Omega},\overline{\mathscr{F}},\overline{\P}};\L^{q'}(0,T;{\L^{q'}(\1)^d})\big) \\
		\label{5.0012}
		\overline{\uu}_n\otimes\overline{\uu}_n&\xrightharpoonup[n\to\infty]{} \overline{\uu}\otimes\overline{\uu},\quad &&\text{in}\quad {\L^{\frac{q}{2}}}\big({\overline{\Omega},\overline{\mathscr{F}},\overline{\P}};\L^{\frac{q}{2}}(0,T;\L^{\frac{q}{2}}(\1)^{d\times d})\big),  \\
		\label{5.00012}
		\bfA(	\overline{\uu}_n)&\xrightharpoonup[n\to\infty]{} \overline{\bfS},\quad &&\text{in}\quad \L^{p'}\big({\overline{\Omega},\overline{\mathscr{F}},\overline{\P}};\L^{p'}(0,T;{\L^{p'}(\1)^{d\times d}})\big),   \\
		\label{5.013}
		\bfA(	\overline{\uu}_n)&\xrightharpoonup[n\to\infty]{} \overline{\bfS},\quad &&\text{in}\quad \L^{p'}\big({\overline{\Omega},\overline{\mathscr{F}},\overline{\P}};\L^{p'}(0,T;{\W^{-1,p'}(\1)^{d\times d}})\big),    \\
		\label{5.014}
		\Phi(	\overline{\uu}_n)&\xrightharpoonup[n\to\infty]{} \Phi(\overline{\uu}),\quad &&\text{in}\quad \L^{2}\big({\overline{\Omega},\overline{\mathscr{F}},\overline{\P}};\L^{2}(0,T;\mathcal{L}_2(\bfU;{\L^2(\1)^d}))\big).
	\end{alignat}
Using \eqref{5.21}-\eqref{5.24} and  \eqref{5.010}-\eqref{5.014}, we obtain the following limit equation, for all $t\in[0,T]$,  $\overline{\P}-$a.s.:		\begin{align}\label{5.35}\nonumber
			\int_\1	&\overline{\uu}(t)\cdot \bphi \,\d\x +\kappa\int_\1\nabla\overline{\uu}{(t)}: \nabla\bphi \,\d\x +\nu \int_0^t\int_\1 \overline{\bfS}:\bfD(\bphi)\,\d\x\d s + \alpha \int_0^t\int_\1 \bfa(\overline{\uu})\cdot\bphi\,\d\x\d s
			\\&\nonumber=\int_\1\overline{\uu}_0\cdot \bphi \,\d\x +\kappa\int_\1\nabla\overline{\uu}_0:\nabla\bphi\,\d\x+\int_0^t\int_\1\overline{\uu}\otimes\overline{\uu}:\nabla\bphi \,\d\x\d s+\int_0^t\int_\1\overline{\f}\cdot\bphi \,\d\x\d s\\&\quad+\int_\1\int_0^t\Phi(\overline{\uu})\d\overline{\W}(s)\cdot\bphi\,\d\x,
				\end{align}
				for all $\bphi\in \C_{0,\sigma}^\infty(\1)^d$.
The stochastic terms {need} some justification while passing {to} the limit.
{From \eqref{5.21}, from one hand, and \eqref{3.6a}
		and \eqref{5.22}, on the other, we have}
		\begin{alignat*}{2}			
				\overline{\W}_n &\xrightarrow[n\to \infty]{} \overline{\W},\quad  &&\text{in}\quad  \C([0,T];\bfU_0),\\
				\Phi(\overline{\uu}_n) &\xrightarrow[n\to \infty]{}  \Phi(\overline{\uu}),\quad&& \text{in}\quad  \L^2(0,T;\mathcal{L}_2(\bfU;{\L^2(\1)^d})),
		\end{alignat*}
		in probability. {By \cite[Lemma 2.1]{ADNGRT}, these convergence results imply}
		\begin{align*}
			\int_{0}^{t}\Phi(\overline{\uu}_n)\d \overline{\W}_n(s)\xrightarrow[n\to \infty]{}	\int_{0}^{t}\Phi(\overline{\uu})\d \overline{\W}(s), \quad \text{in}\quad  \L^2(0,T;\L^2(\1)^d),
		\end{align*}
in probability.
Thus, we are able to pass {to} the limit in {the} stochastic term. 
{This concludes the proof of Claim~\ref{claim:1}.}
\end{proof}

The proof of the next claim shall be done by using the theory of monotone operators.
\begin{claim}\label{claim:2}
\begin{align}\label{5.36}
			\overline{\bfS}=\bfA(\overline{\uu}).
		\end{align}
\end{claim}
\begin{proof}[Proof Claim~\ref{claim:2}]
Applying infinite-dimensional It\^o's formula (see \cite[Theorem 2.1]{IGDS} (see Subsection \ref{PU} below for a proper justification) to the process $\|(\I-\kappa\Delta)^{\frac{1}{2}}\overline{\uu}(\cdot)\|^2_{2}$ and using the fact $\int_\1\overline{\uu}\otimes\overline{\uu}:\nabla\overline{\uu} \d x =0$ in \eqref{5.35}, we get $\P-$a.s.,
		\begin{align*}
				&\|\overline{\uu}(t)\|^2_{2}+\kappa{\|\nabla\overline{\uu}(t)\|_{2}^2}+2\nu \int_{0}^{t}\int_\1 \overline{\bfS}:\bfD(\overline{\uu})\,\d\x\d s+\alpha \int_0^t \int_\1 \bfa(\overline{\uu})\cdot\overline{\uu}\,\d\x\d s
				 \\&=\|\overline{\uu}_0\|^2_{2}+\kappa{\|\nabla\overline{\uu}_0\|_{2}^2}+2\int_{0}^{t}\int_\1\overline{\f}\cdot\overline{\uu}\,\d\x\d s+2\int_0^t\int_\1\Phi(\overline{\uu})\d \overline{\W}(s)\cdot\overline{\uu}\,\d\x\\&\quad+\int_0^t\int_\1\d\bigg\langle\int_0^{\cdot}\Phi(\overline{\uu})\d\overline{\W}\bigg\rangle_s\,\d\x,
			\end{align*}for {all} $t\in[0,T]$.
Applying finite-dimensional It\^o's formula {now} to the process $\|(\I-\kappa\Delta)^{\frac{1}{2}}\overline{\uu}_n(\cdot)\|^2_{2}$, {subtracting the former from the later, and then taking expectation to the obtained equation}, we get
		\begin{align*}
				&2\nu\overline{\E}\bigg[\int_0^T\int_\1(\bfA(\overline{\uu}_n)-\bfA(\overline{\uu})):\bfD\big(\overline{\uu}_n-\overline{\uu}\big)\,\d\x\d s\bigg]  \\
&\quad+2\alpha \overline{\E}\bigg[\int_0^T\int_\1(\bfa(\overline{\uu}_n)-\bfa(\overline{\uu}))\cdot(\overline{\uu}_n-\overline{\uu})\,\d\x\d s\bigg]    \\
& = \overline{\E}\bigg[-\big\{\|\overline{\uu}_n(T)\|^2_{2}+\kappa{\|\nabla\overline{\uu}_n(T)\|_{2}^2}\}+\|\overline{\uu}(T)\|^2_{2}+\kappa{\|\nabla\overline{\uu}(T)\|_{2}^2}\\&\qquad+
\|P^n\overline{\uu}_0^n\|^2_{2}+\kappa{\|\nabla P^n\overline{\uu}_0^n\|_{2}^2}-
\|\overline{\uu}_0\|^2_{2}-\kappa{\|\nabla \overline{\uu}_0\|_{2}^2}\bigg] \\
&\quad+2\nu\overline{\E}\bigg[\int_0^T\int_\1(\overline{\bfS}-\bfA(\overline{\uu}_n)):\bfD(\overline{\uu})\,\d\x\d s-\int_0^T\int_\1\bfA(\overline{\uu}):\bfD\big(\overline{\uu}_n-\overline{\uu})\,\d\x\d s\bigg] \\ &\quad+2\alpha\overline{\E}\bigg[\int_0^T\int_\1(\bfa(\overline{\uu})-\bfa(\overline{\uu}_n))\cdot\overline{\uu}\,\d\x\d s-\int_0^T\int_\1\bfa(\overline{\uu})\cdot(\overline{\uu}_n-\overline{\uu})\,\d\x\d s\bigg]				\\
&\quad +\overline{\E}\bigg[2\int_0^T\int_\1\overline{\f}\cdot(\overline{\uu}_n-\overline{\uu})\,\d\x\d s+\int_0^T\int_\1\d\bigg\langle\int_0^{\cdot}\Phi(\overline{\uu}_n)\d\overline{\W}_{n,n}\bigg\rangle_s\,\d\x \\ &\qquad-\int_0^T\int_\1\d\bigg\langle\int_0^{\cdot}\Phi(\overline{\uu})\d\overline{\W}\bigg\rangle_s\,\d\x\bigg].
				\end{align*}
{
By \eqref{5.010} and the lower semicontinuity of the norm, one has}
\begin{equation*}
\liminf\limits_{n\to\infty}\overline{\E}\left[\|\overline{\uu}_n(T)\|^2_{2}+\kappa{\|\nabla\overline{\uu}_n(T)\|_{2}^2}
-\|\overline{\uu}(T)\|^2_{2}-\kappa{\|\nabla\overline{\uu}(T)\|_{2}^2}\right]\geq 0.
\end{equation*}

Using this, together with \eqref{5.0011}, \eqref{5.000011}, \eqref{5.013} and the monotonicity of the operator $\bfa(\cdot)$, we can pass to the limit $n\to\infty$ in the previous equation to show  that
		\begin{align*}
				&2\nu\lim_{n\to\infty}\overline{\E}\bigg[\int_0^T\int_\1 \big(\bfA(\overline{\uu}_n)-\bfA(\overline{\uu})\big):\bfD\big(\overline{\uu}_n-\overline{\uu}\big)\,\d\x\d s\bigg] \\& \leq \lim_{n\to\infty} \overline{\E}\bigg[\int_0^T\int_\1\d\bigg(\bigg\langle\int_0^{\cdot}\Phi(\overline{\uu}_n)\d\overline{\W}_{n,n}\bigg\rangle_s-\bigg\langle\int_0^{\cdot}\Phi(\overline{\uu})\d\overline{\W}\bigg\rangle_s\bigg)\,\d\x\bigg].
				\end{align*}
For the remaining integral, we use	\eqref{5.21} and \eqref{5.22}, together with \eqref{3.6a}, to find that
		\begin{align*}
			\overline{\E}\bigg[\int_0^T\int_\1\d\bigg\langle\int_0^{\cdot}\Phi(\overline{\uu}_n)\d\overline{\W}_{n,n}\bigg\rangle_s\,\d\x\bigg] \xrightarrow[n\to \infty]{} \overline{\E}\bigg[\int_0^T\int_\1\d\bigg\langle\int_0^{\cdot}\Phi(\overline{\uu})\d\overline{\W}\bigg\rangle_s\,\d\x\bigg].
		\end{align*}Finally, we arrive at
		\begin{align*}
			\lim_{n\to\infty}\overline{\E}\bigg[\int_0^T\int_\1\big( \bfA(\overline{\uu}_n)-\bfA(\overline{\uu})\big):\bfD\big(\overline{\uu}_n-\overline{\uu}\big) \d t\bigg]\leq 0.
		\end{align*}
Using {the} monotonicity of the operator $\bfA(\cdot)$, we find (cf. \cite[Eqn. (1.6)]{BLFREST} or \cite[Appendix A]{JW})
		\begin{align*}
			\bfD(\overline{\uu}_n) \xrightarrow[n\to\infty]{} \bfD(\overline{\uu}), \quad \; \overline\P\otimes\lambda^{d+1}-\text{a.e}.
		\end{align*}
{
As a consequence of this, we obtain \eqref{5.36}, which proves Claim~\ref{claim:2}.}
\end{proof}
{
This finishes the proof of Theorem~\ref{thrmUE2}.}
\end{proof}

\begin{corollary}\label{corllary}
{Let the hypothesis of  Theorem \ref{thrmUE2} be verified.
In addition, assume that \eqref{3.4} and \eqref{3.5} hold with $\gamma\geq2$.}
Then there {exists} a  \emph{martingale solution} to the system  \eqref{AS1} such that
	\begin{align*}
	&\hspace{-.1cm}\E\bigg[\sup_{t\in[0,T]} \big\{\|\overline{\uu}(t)\|^2_{2}+\kappa{\|\nabla\overline{\uu}(t)\|_{2}^2}\big\}\bigg]^{\frac{\gamma}{2}}+
C({p,\1})\nu\E\bigg[\int_{0}^{T}{\|\nabla\overline{\uu}(t)\|_{p}^p}\d t\bigg]^{\frac{\gamma}{2}} \\
&\hspace{-.3cm}\nonumber\quad +C\alpha\E\bigg[\int_{0}^{T}\|\overline{\uu}(t)\|^q_{q}\d t\bigg]^{\frac{\gamma}{2}}
\\&\hspace{-.1cm} 	\leq 	C_1\bigg\{\bigg(\frac{1}{\eta_1}+\kappa\bigg)^{\frac{\gamma}{2}}\bigg(\int_{\bfV}\|\overline{\bz}\|_{\bfV}^2\d\Lambda_0(\overline{\bz})\bigg)^{\frac{\gamma}{2}}+\bigg(\int_{\bfL^2(\1_T)}\|\overline{\bfg}\|_{\bfL^2(\1_T)}^2\d \Lambda_{\overline{\f}}(\overline{\bfg})\bigg)^{\frac{\gamma}{2}}+C(\gamma,K,T)\bigg\},
			\end{align*}where the constant $C_1$ is independent of $\alpha$.
\end{corollary}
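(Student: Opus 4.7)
The plan is to revisit the Galerkin approximation constructed in Subsection \ref{Subs:GA} and upgrade the uniform energy bound of Theorem \ref{thrmUE} from the second moment ($\gamma=2$) to the $(\gamma/2)$--th moment level, and then transfer the resulting bound to $\overline{\uu}$ via the convergences already established in the proof of Theorem \ref{thrmUE2}. All the functional-analytic machinery (Galerkin basis, stopping times $\tau_N^n$ from \eqref{ST}, Skorokhod space, identification of limits) is in place; only the \emph{a priori} moment estimate needs to be strengthened.

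First, I would start from the pathwise identity produced by the finite-dimensional It\^o formula applied to $\|(\I-\kappa\Delta)^{1/2}\uu_n(\cdot)\|_2^2$, i.e.\ display \eqref{5.6} evaluated at $t\wedge\tau_N^n$. Raising both sides to the power $\gamma/2\geq 1$, taking supremum over $[0,T\wedge\tau_N^n]$, applying the elementary inequality $(a_1+\cdots+a_k)^{\gamma/2}\leq C_\gamma(a_1^{\gamma/2}+\cdots+a_k^{\gamma/2})$, and then expectation, yields a bound of the form
\begin{align*}
\E\bigg[\sup_{t\in[0,\tt]}\!\big\{\|\uu_n(t)\|_2^2+\kappa\|\nabla\uu_n(t)\|_2^2\big\}^{\gamma/2}
&+\bigg(\int_0^{\tt}\!\|\nabla\uu_n\|_p^p\,\d s\bigg)^{\!\gamma/2}
+\alpha^{\gamma/2}\bigg(\int_0^{\tt}\!\|\uu_n\|_q^q\,\d s\bigg)^{\!\gamma/2}\bigg]\\
&\leq C\Big\{\E\big[\|\nabla\uu_n(0)\|_2^\gamma\big]+\E\big[\|\f\|_{\bfL^2(\1_T)}^\gamma\big]+1\Big\}+\E\big[J_1^{\gamma/2}+J_2^{\gamma/2}+J_3^{\gamma/2}\big],
\end{align*}
where $J_1,J_2,J_3$ are the three right-hand terms of \eqref{5.6} (forcing, quadratic variation, and the stochastic integral). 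The deterministic pieces $J_1,J_2$ are handled by H\"older and Young's inequalities together with \eqref{3.6a}, producing a time integral of $1+\|\nabla\uu_n\|_2^\gamma$ which will feed into Gronwall.

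The key technical step is the treatment of the stochastic term $J_3(t)=\int_0^t\!\!\int_\1\Phi(\uu_n)\,\d\W_n\cdot\uu_n\,\d\x$ at the $(\gamma/2)$--th moment level. Using the BDG inequality with exponent $\gamma/2$ instead of the $1$ used in \eqref{BDG}, together with $\sum_k|\phi_k(\bxi)|^2\leq\big(\sum_k|\phi_k(\bxi)|\big)^2\leq C(K)(1+|\bxi|^2)$ coming from \eqref{3.6a}, gives
\begin{align*}
\E\bigg[\sup_{t\in[0,\tt]}|J_3(t)|^{\gamma/2}\bigg]
&\leq C\,\E\bigg[\bigg(\int_0^{\tt}\!\sum_{k}\Big|\int_{\1}\phi_k(\uu_n)\cdot\uu_n\,\d\x\Big|^2\d s\bigg)^{\!\gamma/4}\bigg]\\
&\leq C(K)\,\E\bigg[\bigg(\int_0^{\tt}\!\|\uu_n\|_2^2\big(1+\|\uu_n\|_2^2\big)\,\d s\bigg)^{\!\gamma/4}\bigg]\\
&\leq \eps\,\E\bigg[\sup_{t\in[0,\tt]}\|\uu_n(t)\|_2^{\gamma}\bigg]+C(\eps,K,T)\bigg(1+\E\bigg[\int_0^{\tt}\|\uu_n\|_2^{\gamma}\,\d s\bigg]\bigg),
\end{align*}
where the last step uses $\sqrt{ab}\leq\eps a+C(\eps)b$ (with $a=\sup\|\uu_n\|_2^{\gamma}$, $b$ the remaining integral) to split the sup off the integrand. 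Absorbing the $\eps$-term and invoking Gronwall on the resulting scalar inequality in $\E\big[\sup_{t\in[0,\tt]}\{\|\uu_n\|_2^2+\kappa\|\nabla\uu_n\|_2^2\}^{\gamma/2}\big]$ produces the $\gamma/2$-th moment analogue of \eqref{5.9}, with a constant independent of $n$, $N$ and $\alpha$, driven by the $\gamma$-th moments of $\|\nabla\uu_0\|_2$ and $\|\f\|_{\bfL^2(\1_T)}$ that are finite by \eqref{3.4}--\eqref{3.5}. Letting $N\to\infty$ via the monotone convergence theorem removes the stopping time.

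Finally, to transfer the estimate to the martingale solution $\overline{\uu}$ on $(\overline{\Omega},\overline{\mathscr F},\overline{\P})$, I would use that the laws of $\uu_n$ and $\overline{\uu}_n$ coincide (so the same bound holds for $\overline{\uu}_n$) and then pass to the limit using the weak$^{\ast}$ lower semicontinuity of $\uu\mapsto \E\big[\sup_t\{\|\uu\|_2^2+\kappa\|\nabla\uu\|_2^2\}^{\gamma/2}\big]$ along \eqref{5.010} and lower semicontinuity of $\|\cdot\|_{\L^p(\L^{1,p})}^\gamma$, $\|\cdot\|_{\L^q(\L^q)}^\gamma$ along \eqref{5.0011}, \eqref{5.00011}. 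The $\alpha$-independence of the constant is automatic because no term on the right-hand side contains $\alpha$. The only real obstacle in this program is the BDG step above, where the nonlinearity in the sup (coming from $\|\uu_n\|_2^2$ sitting inside the integrand under a power $\gamma/4$) must be split correctly so that a term proportional to $\sup_{t}\|\uu_n(t)\|_2^{\gamma}$ can be absorbed into the left-hand side; everything else is a routine Gronwall + lower semicontinuity argument, identical in spirit to Theorem \ref{thrmUE}.
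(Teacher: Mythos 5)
Your proposal is correct, but it follows a genuinely different route from the paper.  The paper's proof works \emph{directly on the limit}: it applies the infinite-dimensional It\^o formula to $\|(\I-\kappa\Delta)^{1/2}\overline{\uu}(\cdot)\|_2^2$ (the validity of which for $\overline{\uu}$ was invoked already in the proof of Claim~\ref{claim:2} and is deferred to Subsection~\ref{PU} for a proper justification), then takes the supremum over $[0,T]$, raises to the power $\gamma/2$, takes expectation, estimates the stochastic integral by BDG with exponent $\gamma/2$ exactly as you do, and closes with Gronwall.  Your approach instead upgrades Theorem~\ref{thrmUE} to the $\gamma/2$--moment level at the \emph{Galerkin} stage (where It\^o is unproblematic), lets $N\to\infty$ in the stopping times, and transfers the bound to $\overline{\uu}$ through the equality of laws on the Skorokhod space and weak/weak-$*$ lower semicontinuity of the relevant norms.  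Both arguments are sound; yours has the advantage of not needing to validate It\^o's formula on the limit object, at the price of a slightly heavier transfer step.  The one place where your write-up is slightly loose is the final lower semicontinuity step: the weak convergences \eqref{5.010}, \eqref{5.0011}, \eqref{5.00011} are stated in $\L^2$, $\L^p$ and $\L^q$ over $\overline{\Omega}$, not in $\L^\gamma$, $\L^{p\gamma/2}$ and $\L^{q\gamma/2}$, so one must first use the newly obtained uniform $\gamma/2$--moment bound together with Banach--Alaoglu to extract a further subsequence converging weakly (or weakly-$*$, for the $\L^\infty$ in time) in those higher-moment spaces and identify the limit with $\overline{\uu}$ by uniqueness of weak limits; only then is the lower semicontinuity available.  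The remaining pieces of your proposal — the BDG step with exponent $\gamma/2$, the Cauchy--Schwarz bound $\sum_k(\phi_k(\uu_n),\uu_n)^2\leq C(K)(1+\|\uu_n\|_2^2)\|\uu_n\|_2^2$ from \eqref{3.6a}, the splitting $\sqrt{ab}\leq\varepsilon a+C(\varepsilon)b$ to peel off $\sup_t\|\uu_n(t)\|_2^\gamma$, and the observation that $\alpha$ never enters the right-hand side — all match what the paper does in spirit and are correct.
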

\begin{proof}Applying infinite-dimensional  It\^o's formula to the process $\|(\I-\kappa\Delta)^{\frac{1}{2}}\overline{\uu}(\cdot)\|^2_{2}$, and
	taking the supremum from $0$ to $T$ in the resultant, next raising to the power $\frac{\gamma}{2}$ and then taking expectation, we find
	\begin{align}\label{5.38}\nonumber
		&	\E\bigg[\sup_{t\in[0,T]}\big\{\|\overline{\uu} (t)\|^2_{2}+\kappa{\|\nabla\overline{\uu}(t)\|_2^2}\big\}+C({p,\1})\nu\int_{0}^{T}{\|\nabla\overline{\uu}(t)\|_{p}^p} \d t+2\alpha\int_0^T\|\overline{\uu}(t)\|^q_{q}\d t
	\bigg]^{\frac{\gamma}{2}} \\& 	\nonumber\leq C(\gamma)\Bigg\{\bigg(\frac{1}{\eta_1}+\kappa\bigg)^{\frac{\gamma}{2}}\E\big[{\|\nabla\overline{\uu}_0\|_{2}^2}\big]^{\frac{\gamma}{2}}
+\E\bigg[\int_{0}^{T}\int_\1\overline{\f}\cdot\overline{\uu}\,\d\x\d t\bigg]^{\frac{\gamma}{2}}\\&\hspace{-.3cm} \qquad+\E\bigg[\sup_{t\in[0,T]}\bigg|\int_0^t\int_\1\Phi(\overline{\uu})\d\overline{\W}(s)\cdot\overline{\uu}\,\d\x\bigg|\bigg]^{\frac{\gamma}{2}}+\E\bigg[\int_{0}^{T}\int_\1\d\bigg\langle\int_0^{\cdot}\Phi(\overline{\uu})\d\overline{\W}\bigg\rangle_s\,\d\x\bigg]^{\frac{\gamma}{2}}\Bigg\}.
			\end{align}
To estimate the second term in the right hand side of \eqref{5.38}, we use the Cauchy-Schwarz and Young's inequalities  as follows:
	\begin{align*}
		\E\bigg[\int_{0}^{T}\int_\1\overline{\f}\cdot\overline{\uu}\,\d\x\d t\bigg]^{\frac{\gamma}{2}} \leq \e\E\bigg[\int_{0}^{T}\|\overline{\uu}(t)\|^2_{2}\d t\bigg]^{\frac{\gamma}{2}}+C(\e,\gamma)\E\bigg[\int_0^T\|\overline{\f}(t)\|^2_{2}\d t\bigg]^{\frac{\gamma}{2}}.
	\end{align*}
Let us consider the penultimate term of the right hand side of \eqref{5.38}.
Using the similar calculations to \eqref{BDG}, we deduce
	\begin{align*}
	&	\E\bigg[\sup_{t\in[0,T]}\bigg|\int_{0}^{t}\int_\1\Phi(\overline{\uu})\d\overline{\W}(s)\cdot\overline{\uu}\,\d\x\bigg|\bigg]^{\frac{\gamma}{2}}\\& \leq \e\E\bigg[\sup_{t\in[0,T]}\|\overline{\uu}(t)\|^2_{2}\bigg]^\frac{\gamma}{2}
+C(\e,\gamma,K)\E\bigg[T+\int_0^{T}{\|\overline{\uu}(t)\|^2_{2}\d t}\bigg]^\frac{\gamma}{2}.
			\end{align*}
Choosing $\e$ small enough and using {the} above estimates in \eqref{5.38}, we find
	\begin{align}\label{5.39}\nonumber
		&\E\bigg[\sup_{t\in[0,T]} \big\{\|\overline{\uu}(t)\|^2_{2}+2\kappa{\|\nabla\overline{\uu}(t)\|_{2}^2}\big\}\bigg]^{\frac{\gamma}{2}}
+C({p,\1})\nu\E\bigg[\int_{0}^{T}{\|\nabla\overline{\uu}(t)\|_{p}^p}\d t\bigg]^{\frac{\gamma}{2}}\\
&\nonumber\quad +C\alpha\E\bigg[\int_{0}^{T}\|\overline{\uu}(t)\|^q_{q}\d t\bigg]^{\frac{\gamma}{2}} \\& \nonumber\leq C(\gamma)\bigg\{\bigg(\frac{1}{\eta_1}+\kappa\bigg)^{\frac{\gamma}{2}}\E\big[{\|\nabla\overline{\uu}_0\|_{2}^2}\big]^{\frac{\gamma}{2}}
+C(\gamma,\eta_1,K)\E\bigg[\int_{0}^{T}{\|\nabla\overline{\uu}(t)\|_{2}^2}\d t\bigg]^{\frac{\gamma}{2}}\\&\qquad+\E\bigg[\int_0^T\|\overline{\f}(t)\|^2_{2}\d t\bigg]^{\frac{\gamma}{2}}+C(\gamma,K,T)\bigg\}.
			\end{align}
An application of Gronwall's inequality in \eqref{5.39} yields
	\begin{align*}
			&	\E\bigg[\sup_{t\in[0,T]}{\|\nabla\overline{\uu}(t)\|_2^2}\bigg]^{\frac{\gamma}{2}} \\
&\leq
			C_1\bigg\{\bigg(\frac{1}{\eta_1}+\kappa\bigg)^{\frac{\gamma}{2}}\bigg(\int_{\bfV}\|\overline{\bz}\|_{\bfV}^2\d\Lambda_0(\overline{\bz})\bigg)^{\frac{\gamma}{2}}+\bigg(\int_{\bfL^2(\1_T)}\|\overline{\bfg}\|_{\bfL^2(\1_T)}^2\d \Lambda_{\overline{\f}}(\overline{\bfg})\bigg)^{\frac{\gamma}{2}}+C(\gamma,K,T) \bigg\},
		\end{align*}where $C_1:=C_1(\kappa,\gamma,\eta_1,K,T)$.
Collecting the obtained information in \eqref{5.39}, we obtain the required result.
\end{proof}

\subsection{Non-stationary flows}
In this subsection, we prove the main result of this work, that is, the \emph{existence of martingale solutions} to the stochastic problem \eqref{1.1}-\eqref{1.4}.

\begin{proof}[Proof of Theorem \ref{thm:exist}]
The proof of the theorem is lengthy, so we have divided {it into several steps.
We start by approximating the original problem by an auxiliary problem under the conditions of the} previous sections.
By Theorems \ref{thrmUE} and \ref{thrmUE2}, we have the existence of solutions to the approximate system \eqref{AS1}.
Later, we obtain the uniform estimates, {followed by weak convergence of subsequences as a direct application} of the Banach-Alaoglu theorem.
In the second part, we establish some compactness arguments for the solution to {the} approximate system \eqref{AS1}.
We pass to the limit in the viscous term with the help of the monotone operator theory.

\vspace{2mm}
\noindent
\textbf{Step (1):} ({\bf A priori estimate and weak convergence}). We consider the following system:
\begin{equation}\label{5.40}
	\left\{
	\begin{aligned}
		\d(\I-\kappa\Delta)\uu_n(t)&= \big[\operatorname{div}(\bfA(\uu_n(t)))-\operatorname{div}(\uu_n(t)\otimes\uu_n(t))-\frac{1}{n}|\uu_n(t)|^{q-2}\uu_n(t)
		\\&\qquad+\nabla\pi (t)+\f_n(t)\big]\d t+\Phi(t,\uu_n(t))\d\W(t),\\ \uu_n(0)&=\uu_0^n.
		\end{aligned}
	\right.
\end{equation}Using Theorems \ref{thrmUE} and \ref{thrmUE2}, $\alpha=\frac{1}{n}$,
we have the existence of a martingale solution
\begin{align*}
	\big((\Omega,\mathscr{F},\{\mathscr{F}_t\}_{t\geq0},\P),\uu_n,\uu_0^n,\f_n,\W\big)
\end{align*}to \eqref{5.40} with $\uu_n\in\bVcal_{p,q},\;\Lambda_0=\P\circ(\uu_0^n)^{-1}$ and $\Lambda_{\f_n}=\P\circ(\f_n)^{-1}$.
For the sake of writing, we have removed the underline bars.
In view of the aforementioned results, we can write $\P-$a.s.,
\begin{align*}
		&\int_\1\uu_n(t)\cdot\bphi\,\d\x+\kappa\int_\1\nabla\uu_n(t):\nabla\bphi\,\d\x+\nu\int_{0}^{t}\int_\1 \bfA(\uu_n):\bfD(\bphi)\,\d\x\d s\\&\quad+\frac{1}{n}\int_0^t\int_\1 \bfa(\uu_n)\cdot\bphi\,\d\x \d s
		 \\&=\int_\1\uu_0^n\cdot\bphi\,\d\x+\kappa\int_\1\nabla\uu_0^n:\nabla\bphi\,\d\x+\int_{0}^{t}\int_\1\uu_n\otimes\uu_n:\nabla \bphi\,\d\x\d s+\int_{0}^{t}\int_\1\f_n\cdot\bphi \,\d\x\d s\\&\quad+\int_0^t\int_\1\Phi(\uu_n)\d\W(s)\cdot\bphi \,\d\x, \  \text{ for all }\  \bphi\in \bVcal.
	\end{align*} We can choose the probability space independently of $n$.
The same holds for the Wiener process $\W(\cdot)$. On the other hand, Theorem \ref{thrmUE} gives us the uniform estimates for $\uu_n$ in the space
\begin{align*}
	\L^2({\Omega,\mathscr{F},\P};\L^\infty(0,T;\bfV))\cap \L^p({\Omega,\mathscr{F},\P};\L^p(0,T;{\W_0^{1,p}(\1)^d}).
\end{align*}
By Corollary \ref{corllary} and {assumptions \eqref{3.4}-\eqref{3.5} on $\Lambda_0$ and $\Lambda_{\f_n}$, with $\gamma\geq 2$}, we find
\begin{align}\label{5.41}\nonumber
&	\E\bigg[\sup_{t\in(0,T)}\big\{\|\uu_n(t)\|^2_{2}+\kappa{\|\nabla\uu_n(t)\|_2^2}\big\}^{\frac{\gamma}{2}} \bigg]+C({p,\1})\nu\E\bigg[\int_{0}^{T}{\|\nabla\uu_n(t)\|_{p}^p} \d t\bigg]^{\frac{\gamma}{2}}\\
&\quad+\frac{C}{n}\E\bigg[\int_{0}^{T}\|\uu_n(t)\|^q_{q} \d t\bigg]^{\frac{\gamma}{2}}  \leq C_1(\kappa,\gamma,\eta_1,K,T).
\end{align}

\begin{claim}\label{claim:3}
For any $p\geq1$ and
\begin{equation*}
\gamma\geq\max\left\{\frac{pd}{d-2},2+\frac{2p}{d-2}\right\}
\end{equation*}
there holds
\begin{equation}\label{est:E:un2}
\E\bigg[\int_{0}^{T}\|\uu_n(t)\|^{\rho}_{\rho}\d t\bigg]\leq C, \qquad
\mbox{$\displaystyle\rho:=\frac{pd}{d-2}$ if $d\not=2$,\ and  any  $\rho\in[1,\infty)$ if $d=2$.}
\end{equation}
\end{claim}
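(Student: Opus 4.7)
The plan is to combine the Sobolev embeddings $\bfV\hookrightarrow \bfL^{2^\ast}(\mathcal{O})^d$ (with $2^\ast=\frac{2d}{d-2}$) and $\W^{1,p}_0(\mathcal{O})^d\hookrightarrow \bfL^{p^\ast}(\mathcal{O})^d$ (with $p^\ast=\frac{pd}{d-p}$ when $p<d$) together with a simple space interpolation, and then to close the argument by a Hölder step on $\Omega$ using the uniform estimate \eqref{5.41}.

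Focusing first on $d\in\{3,4\}$, I choose $\theta=\frac{d-2}{d}\in(0,1)$ and observe the identity $\theta=\frac{p}{\rho}$ with $\rho=\frac{pd}{d-2}$. A direct computation checks $\frac{1}{\rho}=\frac{\theta}{p^\ast}+\frac{1-\theta}{2^\ast}$, so classical Hölder interpolation yields pointwise in $t$ the bound $\|\uu_n(t)\|_\rho\leq \|\uu_n(t)\|_{p^\ast}^{\theta}\|\uu_n(t)\|_{2^\ast}^{1-\theta}$. Raising to the $\rho$--th power and using $\theta\rho=p$, $(1-\theta)\rho=\rho-p$, followed by the two Sobolev embeddings (and Korn/Poincaré to pass from norms of gradients to equivalent norms on $\bfV$ and $\bfV_p$), I obtain
\begin{equation*}
\|\uu_n(t)\|_\rho^\rho\leq C\,\|\nabla\uu_n(t)\|_p^{p}\,\|\nabla\uu_n(t)\|_2^{\rho-p}.
\end{equation*}
Integrating in $t\in[0,T]$ and pulling the $\L^\infty_t$-factor out,
\begin{equation*}
\int_0^T\|\uu_n(t)\|_\rho^\rho\,\d t\leq C\,\Big(\sup_{t\in[0,T]}\|\nabla\uu_n(t)\|_2\Big)^{\rho-p}\int_0^T\|\nabla\uu_n(t)\|_p^{p}\,\d t.
\end{equation*}

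Now I take expectation and apply Hölder's inequality on $\Omega$ with conjugate exponents $a=\frac{\gamma}{\rho-p}$ and $a'=\frac{\gamma}{\gamma-(\rho-p)}$, which are both admissible since $\gamma\geq \rho-p+2=2+\frac{2p}{d-2}$ by hypothesis. The first expectation is immediately controlled by $\E[\sup_t\|\nabla\uu_n(t)\|_2^{\gamma}]^{(\rho-p)/\gamma}\leq C$, thanks to \eqref{5.41}. For the second expectation, the condition $\gamma\geq 2+\frac{2p}{d-2}$ is exactly what ensures $\frac{\gamma}{\gamma-(\rho-p)}\leq\frac{\gamma}{2}$, so Jensen (or monotonicity of the $L^p(\Omega)$--norms) reduces the bound to $\E\big[\big(\int_0^T\|\nabla\uu_n(t)\|_p^p\,\d t\big)^{\gamma/2}\big]^{(\gamma-(\rho-p))/\gamma}\leq C$, which is again \eqref{5.41}. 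Combining the two estimates yields \eqref{est:E:un2}.

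The subtle point, and the only real obstacle, is the exponent bookkeeping in the Hölder step: the hypothesis $\gamma\geq 2+\frac{2p}{d-2}$ is sharp for this scheme, since it precisely matches $a'$ to the exponent $\gamma/2$ available from \eqref{5.41}. The remaining case $d=2$ is handled by an analogous procedure with the Gagliardo--Nirenberg inequality $\|u\|_\rho\leq C(\rho)\|\nabla u\|_p^{\alpha}\|u\|_2^{1-\alpha}$ playing the role of the interpolation between $\bfL^{p^\ast}$ and $\bfL^{2^\ast}$; choosing $\alpha$ so that $\alpha\rho\leq p$, the same Hölder argument works for any $\rho\in[1,\infty)$, since $\bfV\hookrightarrow \bfL^q$ for every $q<\infty$ in two dimensions, and the uniform bounds from \eqref{5.41} are again what is needed to close.
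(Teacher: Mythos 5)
Your proof is correct and takes essentially the same route as the paper's: interpolate in space to obtain $\|\uu_n\|_\rho^\rho\leq C\,\|\nabla\uu_n\|_p^p\,\|\nabla\uu_n\|_2^{\rho-p}$, extract the $\L^\infty_t$ factor, and apply H\"older on $\Omega$ to match the two factors to the $\gamma$- and $\gamma/2$-moments furnished by \eqref{5.41}; the only organizational difference is that the paper treats $1\leq p\leq 2$ by a shortcut (there $\rho\leq 2^\ast$, so $\|\uu_n\|_\rho\lesssim\|\nabla\uu_n\|_2$ alone suffices, with $\gamma\geq\rho$ already implied), while you run the interpolation uniformly in $p$. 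Two harmless cosmetic points: the exponent on the last expectation should read $2/\gamma$ rather than $(\gamma-(\rho-p))/\gamma$ (coming from $\|X\|_{\L^{a'}(\Omega)}\leq\|X\|_{\L^{\gamma/2}(\Omega)}$), and the $\L^{p^\ast}$--$\L^{2^\ast}$ interpolation as stated presumes $p<d$; the Gagliardo--Nirenberg inequality with the same $\theta=\tfrac{d-2}{d}$ gives the identical pointwise bound for all $p$.
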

\begin{proof}[Proof of Claim~\ref{claim:3}]
Assume that $d\not=2$ (for $d=2$ is easier).
For $1\leq p\leq 2$, we can use {Sobolev's} and Hölder's inequalities, together with \eqref{5.41}, so that for $\rho\geq\max\left\{\frac{pd}{d-2},2\right\},$ one has
\begin{equation*}
\begin{split}
\E\bigg[\int_{0}^{T}\|\uu_n(t)\|^{\frac{pd}{d-2}}_{\frac{pd}{d-2}}\d t\bigg]\leq &
C\E\bigg[\int_{0}^{T}\|\nabla\uu_n(t)\|_2^{\frac{pd}{d-2}}\d t\bigg]=
C\E\bigg[\int_{0}^{T}\left[\left(\|\nabla\uu_n(t)\|^{2}_2\right)^{\frac{\gamma}{2}}\right]^{\frac{pd}{\gamma(d-2)}}\d t\bigg] \\
\leq & C(T)\left\{\E\bigg[\sup_{t\in[0,T]}\left\{\|\uu_n(t)\|^{2}_2+\kappa\|\nabla\uu_n(t)\|^{2}_2\right\}^{\frac{\gamma}{2}}\bigg]\right\}^{\frac{pd}{\gamma(d-2)}}\leq C.
\end{split}
\end{equation*}
If $p>2$, we can use the embedding \begin{align*}
	\L^\infty(0,T;\bfV)\cap\L^p(0,T;\W_0^{1,p}(\1)^d)\hookrightarrow \L^{\frac{pd}{d-2}}(0,T;{\L^{\frac{pd}{d-2}}(\1)^d}),
\end{align*}
together with the Sobolev and H\"older's inequalities,  and with \eqref{5.41}, to show that
\begin{equation*}
\begin{split}
& \E\bigg[\int_{0}^{T}\|\uu_n(t)\|^{\frac{pd}{d-2}}_{\frac{pd}{d-2}}\d t\bigg]\leq C\E\bigg[\int_{0}^{T}\|\nabla\uu_n(t)\|_2^{\frac{2p}{d-2}}\|\nabla\uu_n(t)\|_p^{p}\d t\bigg],\quad p\geq 2  \\
& =
C\E\bigg[\int_{0}^{T}\left[\left(\|\nabla\uu_n(t)\|^{2}_2\right)^{\frac{\gamma}{2}}\right]^{\frac{2p}{\gamma(d-2)}}\|\nabla\uu_n(t)\|_p^{p}\d t\bigg]  \\
& \leq
C\left\{\E\bigg[\sup_{t\in[0,T]}\left\{\|\nabla\uu_n(t)\|^{2}_2\right\}^{\frac{\gamma}{2}}\bigg]\right\}^{\frac{2p}{\gamma(d-2)}}
\left\{\E\bigg[\int_0^T\|\nabla\uu_n(t)\|_p^{p}\d t\bigg]^{\frac{\gamma(p-2)}{\gamma(d-2)-2p}}\right\}^{\frac{\gamma(d-2)-2p}{\gamma(d-2)}},\ \gamma\geq \frac{2p}{d-2} \\
& \leq
C(T) \left\{\E\bigg[\sup_{t\in[0,T]}\left\{\|\uu_n(t)\|^{2}_2+\kappa\|\nabla\uu_n(t)\|^{2}_2\right\}^{\frac{\gamma}{2}}\bigg]\right\}^{\frac{2p}{\gamma(d-2)}}
\left\{\E\bigg[\int_0^T\|\nabla\uu_n(t)\|_p^{p}\d t\bigg]^{\frac{\gamma}{2}}\right\}^{\frac{2}{\gamma}}\leq C.
\end{split}
\end{equation*}
The penultimate inequality holds provided $\gamma\geq2+\frac{2p}{d-2}$.
\end{proof}

\begin{claim}\label{claim:4}
 For some $\gamma\geq \frac{2d}{d-2}$, there holds
 \begin{align}\label{est:E:un:div}
	\E\bigg[\int_{0}^{T}\|\uu_n(t)\otimes\uu_n(t)\|^{q_0}_{q_0}\d t+\int_{0}^{T}\|\operatorname{div}(\uu_n(t)\otimes\uu_n(t))\|^{q_0}_{q_0}\d t\bigg]\leq C,
\end{align}
for $1\leq q_0\leq\frac{d}{d-1}$.
\end{claim}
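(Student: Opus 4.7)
The strategy is to dominate both integrands pointwise (in $(t,\omega)$) by a power of $\|\nabla\uu_n\|_2$ and then invoke the bound \eqref{5.41} with $\gamma\geq \tfrac{2d}{d-2}$ (for $d=2$ a symmetric argument uses any $\gamma\geq 2$ and the Ladyzhenskaya inequality).

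\textbf{Step 1 (the tensor term).} Since $|\uu_n\otimes\uu_n|\simeq |\uu_n|^2$, we have $\|\uu_n\otimes\uu_n\|_{q_0}^{q_0}\leq C\|\uu_n\|_{2q_0}^{2q_0}$. For $1\leq q_0\leq \tfrac{d}{d-1}$ one has $2q_0\leq \tfrac{2d}{d-1}\leq \tfrac{2d}{d-2}$ (for $d\geq 3$), so by the Sobolev embedding $\bfV\hookrightarrow \L^{\frac{2d}{d-2}}(\1)^d$ combined with Poincar\'e's inequality on the bounded domain $\1$,
\begin{equation*}
\|\uu_n\otimes\uu_n\|_{q_0}^{q_0}\leq C\,\|\nabla \uu_n\|_2^{2q_0}.
\end{equation*}

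\textbf{Step 2 (the divergence term).} Because $\operatorname{div}\uu_n=0$, a direct computation gives $\operatorname{div}(\uu_n\otimes\uu_n)=(\uu_n\cdot\nabla)\uu_n$, hence $|\operatorname{div}(\uu_n\otimes\uu_n)|\leq |\uu_n||\nabla\uu_n|$. Applying H\"older with exponents $a=\tfrac{2q_0}{2-q_0}$ and $b=2$ (which satisfy $\tfrac{1}{a}+\tfrac{1}{b}=\tfrac{1}{q_0}$ and $a\leq \tfrac{2d}{d-2}$ precisely when $q_0\leq \tfrac{d}{d-1}$), then invoking Sobolev--Poincar\'e,
\begin{equation*}
\|\operatorname{div}(\uu_n\otimes\uu_n)\|_{q_0}\leq \|\uu_n\|_{a}\|\nabla\uu_n\|_2\leq C\|\uu_n\|_{\frac{2d}{d-2}}\|\nabla\uu_n\|_2\leq C\|\nabla\uu_n\|_2^2.
\end{equation*}

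\textbf{Step 3 (time integration and expectation).} Combining Steps 1 and 2,
\begin{equation*}
\int_0^T\bigl(\|\uu_n\otimes\uu_n\|_{q_0}^{q_0}+\|\operatorname{div}(\uu_n\otimes\uu_n)\|_{q_0}^{q_0}\bigr)\d t\leq C T\,\sup_{t\in[0,T]}\|\nabla\uu_n(t)\|_2^{2q_0}.
\end{equation*}
Taking expectation and noting that $2q_0\leq \tfrac{2d}{d-1}\leq \tfrac{2d}{d-2}\leq \gamma$, Jensen's inequality and \eqref{5.41} give
\begin{equation*}
\E\Bigl[\sup_{t\in[0,T]}\|\nabla\uu_n\|_2^{2q_0}\Bigr]\leq \Bigl(\E\Bigl[\sup_{t\in[0,T]}\|\nabla\uu_n\|_2^{\gamma}\Bigr]\Bigr)^{2q_0/\gamma}\leq C,
\end{equation*}
which delivers \eqref{est:E:un:div}. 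The case $d=2$ is analogous, replacing the Sobolev embedding $H^1\hookrightarrow L^{\frac{2d}{d-2}}$ by $H^1\hookrightarrow L^{r}$ for every $r<\infty$ (or, more cleanly, by Ladyzhenskaya's inequality $\|\uu_n\|_4^2\leq C\|\uu_n\|_2\|\nabla\uu_n\|_2$), which still bounds both integrands by a power of $\|\nabla\uu_n\|_2$ controllable by \eqref{5.41} with $\gamma\geq 2$. No significant obstacle is expected; the only point requiring care is matching the exponent $2q_0$ to the range of $\gamma$ already provided by the a priori bound, which is exactly why the threshold $\gamma\geq \tfrac{2d}{d-2}$ appears in the hypothesis.
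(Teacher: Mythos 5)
Your proof follows essentially the same path as the paper's: bound $\|\uu_n\otimes\uu_n\|_{q_0}$ by $\|\uu_n\|_{2q_0}^2$ via Sobolev--Poincar\'e, treat the divergence term through $|\uu_n||\nabla\uu_n|$ with H\"older exponents $\bigl(\tfrac{2q_0}{2-q_0},2\bigr)$ and the constraint $q_0\le\tfrac{d}{d-1}$, and close with the a priori moment estimate \eqref{5.41}. The only cosmetic difference is that you name the final step "Jensen's inequality" whereas the paper uses \eqref{5.41} directly, but the content and exponent bookkeeping are identical.
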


\begin{proof}[Proof of Claim~\ref{claim:4}]
Assume also here that $d\not=2$. By Hölder's and Sobolev's inequalities, along with \eqref{5.41}, one immediately has
\begin{align*}
	\E\bigg[\int_0^T\|\uu_n(t)\otimes \uu_n(t)\|_{q_0}^{q_0}\d t\bigg]&\leq \E\bigg[\int_0^T\| \uu_n(t)\|_{2q_0}^{2q_0}\d t\bigg] \leq C\E\bigg[\int_0^T\|\nabla \uu_n(t)\|_2^{2q_0}\d t\bigg]\\&\leq C T\E\bigg[\sup_{t\in[0,T]}\|\nabla \uu_n(t)\|_{2}^{2q_0}\bigg] \leq C,
\end{align*}for $q_0\leq \frac{d}{d-2}$ and $\gamma\geq 2q_0$.

	In turn, using Hölder's and Sobolev's inequalities, together with \eqref{5.41}, one has
	\begin{align*}
&	\E\bigg[\int_{0}^{T}\|\operatorname{div}(\uu_n(t)\otimes\uu_n(t))\|^{q_0}_{q_0}\d t\bigg] \leq \E\bigg[\int_0^T\||\uu_n(t)|\nabla \uu_n|\|_{q_0}^{q_0}\d t \bigg]\\&\leq \E\bigg[\int_0^T\|\uu_n(t)\|_{\frac{2q_0}{2-q_0}}^{q_0}\|\nabla \uu_n(t)\|_2^{q_0}\d t\bigg]
\\&\leq C\E\bigg[\int_0^T\|\nabla \uu_n(t)\|_{2}^{2q_0}\d t\bigg] \qquad
\mbox{for}\ 1\leq q_0\leq \frac{d}{d-1}\\& \leq CT\E\bigg[\sup_{t\in[0,T]}\|\nabla \uu_n(t)\|_2^{2q_0}\bigg]\leq C, \qquad
\mbox{for}\ \gamma \geq 2q_0.
	\end{align*}Combining the above estimates, we prove \eqref{est:E:un:div} for
\begin{equation}\label{Q0}
1\leq q_0\leq\frac{d}{d-1}\ \  \text{ and }\ \ \gamma\geq \frac{2d}{d-2}.
\end{equation}
The case of $d=2$ is easy. The first part follows immediately from \eqref{est:E:un2} and the second part is an easy consequence of Sobolev's inequality.
\end{proof}

Applying the Banach-Alaoglu theorem, we can pass the limit along a subsequence (still denoting by the same index) as follows:
\begin{alignat}{2}
	\label{5.43}
	\uu_n& \xrightharpoonup[n\to\infty]{} \uu,\quad &&\text{in} \quad \L^{\frac{\gamma p}{2}}\big({\Omega,\mathscr{F},\P};\L^{p}(0,T;{\W_0^{1,p}(\1)^d})\big),  \\
	\label{5.44}
	\uu_n& \xrightharpoonup[n\to\infty]{} \uu,\quad &&\text{in}\quad  \L^{\gamma}\big({\Omega,\mathscr{F},\P};\L^\rho(0,T;\bfV)\big), \quad \forall\ {\rho\geq 1}, \\
	 \label{5.044}\frac{1}{n}|\uu_n|^{q-2}\uu_n &\xrightarrow[n\to\infty]{} 0,\quad &&\text{in}\quad
\L^{\frac{\gamma q'}{2}}({\Omega,\mathscr{F},\P};\L^{q'}(0,T;{\L^{q'}(\1)^d})), \\	
	\label{5.45}
	\uu_n \otimes\uu_n& \xrightharpoonup[n\to\infty]{} \w, \quad &&\text{in}\quad  \L^{q_0}\big({\Omega,\mathscr{F},\P};\L^{q_0}(0,T;{\W^{1,q_0}(\1)^{d\times d}}),   \\
	\label{5.46}
	\bfA(\uu_n)& \xrightharpoonup[n\to\infty]{} \bfS,\quad &&\text{in}\quad  {\L^{p'}}\big({\Omega,\mathscr{F},\P};\L^{p'}(0,T;{\L^{p'}(\1)^{d\times d}})\big),  \\
	\label{5.47}
	\bfA(\uu_n)& \xrightharpoonup[n\to\infty]{} \bfS,\quad &&\text{in}\quad  \L^{p'}\big({\Omega,\mathscr{F},\P};\L^{p'}(0,T;{\W^{-1,p'}(\1)^{d\times d}})\big),  \\
	\label{5.48}
	\Phi(\uu_n)& \xrightharpoonup[n\to\infty]{} \widehat{\Phi},\quad &&\text{in}\quad  \L^{2}\big({\Omega,\mathscr{F},\P};\L^\rho(0,T;\mathcal{L}_2(\bfU,{\L^2(\1)^d}))\big),\quad \forall \ {\rho\geq 1}.
\end{alignat}
Moreover, we have
\begin{align*}	
		\uu&\in \L^{\gamma}({\Omega,\mathscr{F},\P};\L^\infty(0,T;\bfV)),\\
		\widehat{\Phi}&\in \L^\gamma({\Omega,\mathscr{F},\P};\L^\infty(0,T;\mathcal{L}_2(\bfU,{\L^2(\1)^d}))).
\end{align*}
Now, for the reconstruction of the pressure term, we set
\begin{align*}
		{\bcH}_1^n&:= \bfA(\uu_n),\\
		{\bcH}_2^n&:=\uu_n\otimes\uu_n+\nabla\Delta^{-1}\f_n+\nabla\Delta^{-1}\left(\frac{1}{n}|\uu_n|^{q-2}\uu_n\right)
		,\\
		\Phi^n&:=\Phi(\uu_n).
\end{align*}Applying Theorem \ref{pr:th:1}, Corollaries \ref{p:dc:cor:2} and \ref{p:dc:cor:3}, we get functions $\pi_h^n,\pi_1^n$ and $\pi_2^n$ adapted to $\{\overline{\mathscr{F}}_t\}_{t\in[0,T]}$ and $\Phi_\pi^n$ progressively measurable such that $\P-$a.s.,
\begin{align}\label{5.49}\nonumber
		&\int_\1(\uu_n-\nabla \pi_h^n)(t)\cdot \bphi\,\d\x +\kappa \int_\1\nabla\uu_n(t):\nabla\bphi\,\d\x \\&\nonumber=\int_\1\uu_0^n\cdot \bphi\,\d\x +\kappa \int_\1\nabla\uu_0^n:\nabla\bphi\,\d\x -\nu \int_{0}^{t}\int_\1({\bcH}_1^n-\pi_1^n\I):\nabla \bphi \,\d\x\d s\\&\quad+\int_{0}^{t}\int_\1\operatorname{div} \big({\bcH}_2^n-\pi_2^n\I \big): \bphi\,\d\x\d s +\int_0^t\int_\1\Phi^n \d\W(s)\cdot\bphi \,\d\x+\int_0^t\int_\1\Phi_\pi^n \d\W(s)\cdot \bphi \,\d\x.
\end{align}
Hence, {the following functions are uniformly bounded, with respect to $n$, in the below mentioned spaces}
\begin{alignat}{2}
	\label{5.50}
	{\bcH}_1^n &\in \L^{\frac{\gamma p'}{2}}({\Omega,\mathscr{F},\P};\L^{p'}(0,T;{\L^{p'}(\1)^{d\times d}})),\\ \label{5.51}
	{\bcH}_2^n &\in \L^{q_0}({\Omega,\mathscr{F},\P};\L^{q_0}(0,T;{\W^{1,q_0}(\1)^{d\times d}})),\\
	\label{5.52}
	\Phi^n&\in \L^\gamma ({\Omega,\mathscr{F},\P};\L^\infty(0,T;\mathcal{L}_2(\bfU,{\L^2(\1)^d})),
\end{alignat}where we have used the continuity of $\nabla\Delta^{-1}$ from $\L^{q_0}(\1)^d$ to $\W^{1,q_0}(\1)^{d\times d}$.
We know that the {corresponding pressure functions are also uniformly bounded in the scalar spaces.}
That is
\begin{alignat}{2}
	\label{5.53}
	\pi_h^n &\in \L^\gamma ({\Omega,\mathscr{F},\P};\L^\infty(0,T;\L^2(\1))),\\
	\label{5.54}
	\pi_1^n &\in \L^{\frac{\gamma p'}{2}}({\Omega,\mathscr{F},\P};\L^{p'}(0,T;\L^{p'}(\1))),\\
	\label{5.55}
	\pi_2^n& \in \L^{q_0}({\Omega,\mathscr{F},\P};\L^{q_0}(0,T;\W^{1,q_0}(\1))),\\
	\label{5.56}
	\Phi_\pi^n &\in \L^\gamma ({\Omega,\mathscr{F},\P};\L^\infty(0,T;\mathcal{L}_2(\bfU,{\L^2(\1)^d})),
\end{alignat}
where we have applied Corollaries \ref{p:dc:cor:2} and \ref{p:dc:cor:3}.
Using the regularity theory for the harmonic functions and Corollary \ref{p:dc:cor:3} to the harmonic pressure term, we find {that}
\begin{align}\label{5.57}
	\pi_h^n\in \L^\gamma ({\Omega,\mathscr{F},\P};\L^\rho(0,T;\W^{k,\infty}(\1))), \quad {\forall\ k\in\N,\quad \forall\ \rho\geq 1}.
\end{align}
Passing $n\to\infty$ along subsequences, we  obtain the following convergence results:
\begin{alignat}{2}
	\label{5.58}
	\pi_h^n &\xrightharpoonup[n\to\infty]{}  \pi_h, \quad &&\text{in}\quad \L^\gamma ({\Omega,\mathscr{F},\P};\L^\rho(0,T;\W^{k,\rho}(\1)), \quad {\forall \rho\geq 1},\\
	\label{5.59}
	\pi_1^n &\xrightharpoonup[n\to\infty]{}\pi_1, \quad &&\text{in}\quad \L^{\frac{\gamma p'}{2}}({\Omega,\mathscr{F},\P};\L^{p'}(0,T;\L^{p'}(\1))),\\
	\label{5.60}
	\pi_2^n&\xrightharpoonup[n\to\infty]{}   \pi_2, \quad&&  \text{in}\quad \L^{q_0}({\Omega,\mathscr{F},\P};\L^{q_0}(0,T;\W^{1,q_0}(\1))),\\
	\label{5.61}
	\Phi_\pi^n &\xrightharpoonup[n\to\infty]{}       \Phi_\pi,     \quad&&\text{in}\quad \L^\gamma ({\Omega,\mathscr{F},\P};\L^\rho(0,T;\mathcal{L}_2(\bfU,{\L^2(\1)^d})), \quad {\forall \rho\geq 1}.
\end{alignat}

Now, our goal is to show that in \eqref{5.45} and \eqref{5.48}, we have $\w=\uu\otimes\uu$ and $\widehat{\Phi} =\Phi(\uu)$.
{To prove this,} we use the compactness arguments and a version of Skorokhod's theorem (see \cite[Theorem 2]{AJ}) which help us in the construction of a new probability space, similar to the proof of Theorem \ref{thrmUE2}.
In the final part, we will prove {that} $\bfS = \bfA(\uu)$.

\vspace{2mm}
\noindent
\textbf{Step (2):} ({\bf Compactness}).
In this step, we prove the compactness of the sequence $\{\uu_n\}_{n\in\N}$. {Here we follow the approach used in \cite[Section 4]{MH} (see also \cite[Section 5]{Breit})}. {As the pressure needs to be included in the compact method, we have to work with weak convergence results.} {However, in this case, we cannot apply the classical Skorokhod theorem} due to the presence of the non-metric space (see \eqref{NMS} below). Even though there is a {generalization of this result that includes weak topologies in Banach spaces: the Jakubowski-Skorokhod theorem} (see \cite[Theorem 2]{AJ}), {which can be applied}  to quasi-Polish {spaces (see \cite{BreitMH})}.

From \eqref{5.40}-\eqref{est:E:un2}, we compute that
\begin{align*}
	\E\bigg[\bigg\|(\I-\kappa\Delta)\uu_n(t)-\int_{0}^{t}\Phi(\uu_n)\d\W(s)\bigg\|_{\W^{1,q_0}(0,T;{\W_{\sigma}^{-1,q_0}(\1)^d})}\bigg] \leq C.
\end{align*} For the stochastic integral term we use \eqref{STphi1} (see \cite[Lemma 4.6]{MH}), for some $\mu>0$
\begin{align*}
	\E\bigg[\bigg\|\int_{0}^{t}\Phi(\uu_n)\d\W(s)\bigg\|_{\mathrm{C}^{\mu}([0,T];{\L^2(\1)^d})}\bigg] \leq C,
\end{align*}which is a consequence of \eqref{5.41}, and the assumption \eqref{3.6a}. Using the above information and similar arguments to \eqref{5.19} and \eqref{5.019}, we find
\begin{align}\label{5.63}
	\E\bigg[\|\uu_n\|_{\W^{\eta,q_0}(0,T;{\W_{\sigma}^{1,q_0}(\1)^d})}\bigg]\leq C,\ \mbox{  for some  }\ \eta>0.
\end{align}
Using the embedding $ \W_{\sigma}^{1,q_0}(\1)^d\hookrightarrow\L_{\sigma}^{q_0}(\1)^d$, for any $q_0\in[1,\infty)$, we obtain
\begin{align}\label{5.64}
	\E\bigg[\|\uu_n\|_{\W^{\eta,q_0}(0,T;{\L_{\sigma}^{q_0}(\1)^d})}\bigg]\leq C.
\end{align}
Again, by \cite[Theorem 2.1]{FFDG}, for $p>\frac{2d}{d+2}$, we obtain the following compact embedding:
\begin{align*}
	\W^{\eta,q_0}(0,T;\L_{\sigma}^{q_0}(\1)^d)\cap \L^p(0,T;\W_{0,\sigma}^{1,p}(\1)^d) \hookrightarrow\hookrightarrow \L^\rho(0,T;\L_{\sigma}^\rho(\1)^d),
\end{align*}
for all $\rho<\min\big\{\frac{p(d+2)}{d},\frac{2d}{d-2}\big\}$.
Note that $\rho<\min\left\{\frac{p(d+2)}{d},p^\ast,2^\ast\right\}$ and for $p>\frac{2d}{d+2}$ we know that $\frac{p(d+2)}{d}<p^\ast$.

We use the above compact embedding for {the} compactness of $\{\uu_n\}_{n\in\N}$.
One can argue for {the} harmonic pressure  term  $\{\pi_h^n\}_{n\in\N}$ in a similar manner. Reasoning as in \cite[Eqn. (4.24)]{JW}, we can combine this with the regularity theory for harmonic functions and with the Lebesgue dominated convergence theorem to prove that the following compact embedding holds:
\begin{align*}
	\L^\infty(0,T;\L^2(\1))\cap \{\Delta u(t)=0, \text{ for a.e. } t \} {\hookrightarrow\hookrightarrow}  {\L^\rho(0,T;\L^\rho(\1))}.
\end{align*}

Now, we define a path space
\begin{align}\label{NMS}\nonumber
		\mathfrak{V}&:= {\L^\rho(0,T;\L_{\sigma}^\rho(\1)^d)\otimes \L^\rho(0,T;\L^\rho(\1))\otimes \L_{\rm w}^{p'}(0,T;\L^{p'}(\1))\otimes \L_{\rm w}^{q_0}(0,T;\W^{1,q_0}(\1))}\\&\qquad\otimes {\L_{\rm w}^\rho(0,T;\mathcal{L}_2(\bfU,\L^2(\1)^d))} \otimes \mathrm{C}([0,T];\bfU_0)\otimes \bfV\otimes \bfL^2(\1_T),	
\end{align}
where $\rm w$ denotes the weak topology.
The following notations are to be used next:
\begin{enumerate}
	\item $\varrho_{\uu_n}$ denotes the law of $\uu_n$ on ${\L^\rho(0,T;\L_{\sigma}^\rho(\1)^d)}$,
	\item $\varrho_{\pi_h^n}$ denotes the law of $\pi_h^n$ on ${\L^\rho(0,T;\L^\rho(\1))}$,
	\item $\varrho_{\pi_1^n}$ denotes the law of $\pi_1^n$ on $\L_{\rm w}^{p'}(0,T;\L^{p'}(\1))$,
	\item $\varrho_{\pi_2^n}$ denotes the law of $\pi_2^n$ on $\L_{\rm w}^{q_0}(0,T;\W^{1,q_0}(\1))$,
	\item $\varrho_{\Phi_\pi^n}$ denotes the law of $\Phi_\pi^n$ on $\L_{\rm w}^\rho(0,T;\mathcal{L}_2(\bfU,\L^2(
	\1)^d))$,
	\item $\varrho_{\W_n}$ denotes the law of $\W_n$ on $\mathrm{C}([0,T];\bfU_0)$,
	\item $\varrho_n$ denotes the joint law of $\uu_n,\pi_h^n,\pi_1^n,\pi_2^n,\Phi_\pi^n,\W_n,\uu_0^n,\f_n $ on $\mathfrak{V}$.
\end{enumerate}
Now, our main goal is to prove the tightness of the measure $\varrho_n$.
For that, we first consider the ball $B_N$ in the space $\W^{\eta,q_0}(0,T;{\L_{\sigma}^{q_0}(\1)^d})\cap \L^p(0,T;{\W_{\sigma}^{1,p}(\1)^d})$ and the complement of this ball is denoted by $B_N^c$. Thus, we have
\begin{align*}
	\varrho_{\uu_n}(B_N^c)&=\P \left(\|\uu_n\|_{\W_{\sigma}^{\eta,q_0}(0,T;{\L_{\sigma}^{q_0}(\1)^d})}+\|\uu_n\|_{\L^p(0,T;{\W_{0,\sigma}^{1,p}(\1)^d})}\geq N\right) \\& \leq \frac{1}{N}\E \left[\|\uu_n\|_{\W_{\sigma}^{\eta,q_0}(0,T;{\L_{\sigma}^{q_0}(\1)^d})}+\|\uu_n\|_{\L^p(0,T;{\W_{0,\sigma}^{1,p}(\1)^d})}\right] \leq \frac{C}{N},
\end{align*}
where we have used \eqref{5.41} and \eqref{5.64}. Thus, for any fixed $\xi>0$, we can find $N(\xi)$ such that \begin{align*}
	\varrho_{\uu_n}(B_{N(\xi)}) \geq 1-\frac{\xi}{8}.
\end{align*}
Using \eqref{5.57}, we can also prove that the law of $\pi_h^n$ is tight, that is, there exists a compact subset $K_\pi\subset {\L^\rho(0,T;\L^\rho(\1))}$ such that $\varrho_{\pi_h^n}(K_\pi) \geq 1-\frac{\xi}{8}$. As we know that our spaces are reflexive, therefore we can find compact sets for $\pi_1^n,\pi_2^n$ and $\Phi_\pi^n$ with measures $\geq 1-\frac{\xi}{8}$. And the law $\varrho_\W$ is tight as it is the same with the law of Brownian motion $\W$ which is a Radon measure on the space $\C([0,T];\bfU_0)$. So, there exists another compact set $K_\xi \subset \mathrm{C}([0,T];\bfU_0)$ with $\varrho_{\W_n} (K_\xi)\geq 1-\frac{\xi}{8}$. Using the similar arguments, we can find compact subsets of $\bfV$ and $\bfL^2(\1_T)$ with their measures $\big(\Lambda_0$ and $\Lambda_{\f_n}\big)$ $\geq 1-\frac{\xi}{8}$. Therefore, there exists a compact subset $\mathfrak{V}_\xi\subset \mathfrak{V}$ with $\varrho_n(\mathfrak{V}_\xi)\geq 1-\xi$. Hence, $\{\varrho_n\}_{n\in\N}$ is tight in the same space. Applying the Jakubowski version of Skorokhod Theorem (see \cite[Theorem 2]{AJ}), we ensure the existence of a probability space $(\overline{\Omega},\overline{\mathscr{F}},\overline{\P})$, and a sequence of random variables $(\overline{\uu}_n,\overline{\pi}_h^n,\overline{\pi}_1^n,\overline{\pi}_2^n,\overline{\Phi}_\pi^n,\overline{\W}_n,\overline{\uu}_0^n,\overline{\f}_n)$ and a random variable $(\overline{\uu},\overline{\pi}_h,\overline{\pi}_1,\overline{\pi}_2,\overline{\Phi}_\pi,\overline{\W},\overline{\uu}_0,\overline{\f})$  defined on the probability space $(\overline{\Omega},\overline{\mathscr{F}},\overline{\P})$ taking values in the space $\mathfrak{V}$ (see \eqref{NMS}) such that the following holds:
\begin{enumerate}
	\item The laws of $(\overline{\uu}_n,\overline{\pi}_h^n,\overline{\pi}_1^n,\overline{\pi}_2^n,\overline{\Phi}_\pi^n,\overline{\W}_n,\overline{\uu}_0^n,\overline{\f}_n)$ and $(\overline{\uu},\overline{\pi}_h,\overline{\pi}_1,\overline{\pi}_2,\overline{\Phi}_\pi,\overline{\W},\overline{\uu}_0,\overline{\f})$ are the same under $\overline{\P}$ and coincide with $\varrho_n$ and $\varrho:=\lim\limits_{n\to\infty}\varrho_n$, respectively;
	\item The following weak convergence holds
\begin{alignat*}{2}
			\overline{\pi}_1^n  &\xrightharpoonup[n\to\infty]{} \overline{\pi}_1, \quad &&\text{in}\quad \L^{p'}(0,T;\L^{p'}(\1)),\\
			\overline{\pi}_2^n  &\xrightharpoonup[n\to\infty]{} \overline{\pi}_2,  \quad &&\text{in}\quad \L^{q_0}(0,T;\W^{1,q_0}(\1)),\\
			\overline{\Phi}_\pi^n &\xrightharpoonup[n\to\infty]{} 	\overline{\Phi}_\pi,\quad && \text{in}\quad \L^\rho(0,T;\mathcal{L}_2(\bfU,{\L^2(\1)^d})),
		\end{alignat*}$	\overline{\P}-$a.s.;
	\item The following strong convergence holds
\begin{alignat*}{2}
			\overline{\uu}_n &\xrightarrow[n\to\infty]{} \overline{\uu},\quad&&\text{in}\quad {\L^\rho(0,T;\L_{\sigma}^\rho(\1)^d)},\\
			\overline{\pi}_h^n &\xrightarrow[n\to\infty]{} \overline{\pi}_h, \quad&&\text{in}\quad {\L^\rho(0,T;\L^\rho(\1))},\\
			\overline{\W}_n &\xrightarrow[n\to\infty]{}	\overline{\W},\quad&& \text{in}\quad \mathrm{C}([0,T];\bfU_0),\\
			\overline{\uu}_0^n &\xrightarrow[n\to\infty]{} 	\overline{\uu}_0,\quad &&\text{in}\quad \bfV,\\
			\overline{\f}_n &\xrightarrow[n\to\infty]{} 	\overline{\f},\quad &&\text{in}\quad \L^2(0,T;{\L^2(\1)^d}),
			\end{alignat*}$\overline{\P}-$a.s.;
	\item For all $\beta<\infty$, we have
	\begin{align*}
	\overline{\E}\bigg[\sup_{t\in[0,T]} \|\overline{\W}_n(t)\|_{\bfU_0}^\beta\bigg] = 	\E\bigg[\sup_{t\in[0,T]} \|\overline{\W}(t)\|_{\bfU_0}^\beta\bigg].
	\end{align*}
\end{enumerate}
Using the equivalency in distributions, we find the following weak convergence results:
\begin{alignat*}{2}	
		\overline{\pi}_1^n  &\xrightharpoonup[n\to\infty]{} \overline{\pi}_1,\quad&&  \text{in}\quad \L^{p'}({\overline{\Omega},\overline{\mathscr{F}},\overline{\P}};\L^{p'}(0,T;\L^{p'}(\1))),\\
		\overline{\pi}_2^n  &\xrightharpoonup[n\to\infty]{} \overline{\pi}_2, \quad&& \text{in}\quad \L^{q_0}({\overline{\Omega},\overline{\mathscr{F}},\overline{\P}};\L^{q_0}(0,T;\W^{1,q_0}(\1))),\\
		\overline{\Phi}_\pi^n &\xrightharpoonup[n\to\infty]{} 	\overline{\Phi}_\pi, \quad&&\text{in}\quad \L^{\gamma}({\overline{\Omega},\overline{\mathscr{F}},\overline{\P}};\L^\rho(0,T;\mathcal{L}_2(\bfU,{\L^2(\1)^d}))).
	\end{alignat*}
	Using Vitali's convergence theorem along subsequences, we obtain the following strong convergence results:
\begin{alignat}{2}
	\label{5.65}
	\overline{\W}_n &\xrightarrow[n\to\infty]{}  \overline{\W}, \quad&&\text{in}\quad \L^2({\overline{\Omega},\overline{\mathscr{F}},\overline{\P}};\mathrm{C}([0,T];\bfU_0)),\\
	\label{5.66}
	\overline{\uu}_n &\xrightarrow[n\to\infty]{} \overline{\uu},\quad&&\text{in}\quad {\L^\rho(\overline{\Omega},\overline{\mathscr{F}},\overline{\P};\L^\rho(0,T;\L_{\sigma}^\rho(\1)^d))},\\
	\label{5.67}
	\nabla^j \overline{\pi}_h^n&\xrightarrow[n\to\infty]{} \nabla^j \overline{\pi}_h,\quad&& \text{in}\quad {\L^\rho(\overline{\Omega},\overline{\mathscr{F}},\overline{\P};\L^\rho(0,T;\L^\rho(\1)))},\\
	\label{5.68}
	\overline{\uu}_0^n &\xrightarrow[n\to\infty]{} \overline{\uu}_0,\quad&&\text{in}\quad \L^2({\overline{\Omega},\overline{\mathscr{F}},\overline{\P}};\bfV),\\
	\label{5.69}
	\overline{\f}_n &\xrightarrow[n\to\infty]{} \overline{\f}, \quad&&\text{in}\quad \L^2({\overline{\Omega},\overline{\mathscr{F}},\overline{\P}};\L^2(0,T;{\L^2(\1)^d})),
\end{alignat}for all $\rho <\min\big\{ \frac{p(d+2)}{d},\frac{2d}{d-2}\big\}$.
{In the harmonic pressure term, we have used the regularity theory for harmonic maps.
As a consequence,  for any $\beta<\infty$,} we have
\begin{alignat}{2}
	\label{5.70}
	\overline{\uu}_n\otimes\overline{\uu}_n & \xrightharpoonup[n\to\infty]{} \overline{\uu}\otimes \overline{\uu},\quad&& \text{in}\quad  \L^{q_0}({\overline{\Omega},\overline{\mathscr{F}},\overline{\P}};\L^{q_0}(0,T;{\W^{1,q_0}(\1)^{d\times d}}))   ,\\
	\label{5.71}
	\Phi(\overline{\uu}_n) & \xrightharpoonup[n\to\infty]{} 	\Phi(\overline{\uu}) ,\quad&& \text{in}\quad \L^{\gamma}({\overline{\Omega},\overline{\mathscr{F}},\overline{\P}};\L^\beta(0,T;\mathcal{L}_2(\bfU,{\L^2(\1)^d})))   ,\\
	\label{5.72}
	\Phi_\pi(\overline{\uu}_n)& \xrightharpoonup[n\to\infty]{} 	\Phi_\pi(\overline{\uu}),\quad &&\text{in}\quad  \L^{\gamma}({\overline{\Omega},\overline{\mathscr{F}},\overline{\P}};\L^\beta(0,T;\mathcal{L}_2(\bfU,{\L^2(\1)^d}))) .
\end{alignat}
Now, we define
the $\overline{\P}-$augmented canonical filtration, {which is} denoted by $\{\overline{\mathscr{F}}_t\}_{t\geq 0}$, of the process $(\overline{\uu},\overline{\pi}_h,\overline{\pi}_1,\overline{\pi}_2,\overline{\Phi},\overline{\W},\overline{\f})$, that is,
\begin{align*}
	\overline{\mathscr{F}}_t = \sigma \big(\sigma(\bfh_t\overline{\uu},\bfh_t\overline{\pi}_h,\bfh_t\overline{\pi}_1,\bfh_t\overline{\pi}_2,\bfh_t\overline{\Phi}_\pi,\bfh_t\overline{\W},\bfh_t\overline{\f})\cup \{M\in\overline{\mathscr{F}};\overline{\P}(M)=0\}\big), \; t\in[0,T].
\end{align*}
{Proceeding as in the proof of Theorem \ref{thrmUE2}, choosing in the present case (non-divergence free) test functions from $\mathrm{C}_0^\infty(\1)^d$, we can show that the following equation holds on the new probability space.
That is,} we have $\P-$a.s.,
\begin{align}\label{5.0073}\nonumber	
		&	\int_\1(\overline{\uu}_n-\nabla \overline{\pi}_h^n)(t)\cdot\bphi \,\d\x+\kappa\int_\1\nabla\overline{\uu}_n(t):\nabla\bphi \,\d\x\\&\nonumber	= \int_\1\overline{\uu}_0^n\cdot\bphi \,\d\x+\kappa\int_\1\nabla\overline{\uu}_0^n:\nabla\bphi\,\d\x -\nu\int_{0}^{t}\int_\1\big(\overline{{\bcH}}_1^n-\overline{\pi}_1^n\I\big):\nabla\bphi \,\d\x\d s\\& \quad + \int_{0}^{t}\int_\1\operatorname{div}\big(\overline{{\bcH}}_2^n-\overline{\pi}_2^n\I\big)\cdot\bphi \,\d\x\d s  +\int_0^t\int_\1 \Phi(\overline{\uu}_n)\d\overline{\W}_n(s)\cdot\bphi \,\d\x +\int_0^t\int_\1\overline{\Phi}_\pi^n\d\overline{\W}_n(s)\cdot\bphi \,\d\x,
	\end{align} for all $t\in[0,T]$, and $\bphi \in {\mathrm{C}_0^\infty(\1)^d}$, where we {have} set \begin{align*}\overline{{\bcH}}_1^n&:=\bfA(\overline{\uu}_n),\\ \overline{{\bcH}}_2^n&:=\overline{\uu}_n\otimes \overline{\uu}_n+\nabla \Delta^{-1}\overline{\f}_n+\nabla\Delta^{-1}\left(\frac{1}{n}|\overline{\uu}_n|^{q-2}\overline{\uu}_n\right).
	\end{align*}
Passing $n \to \infty$, using the above convergence results, and \cite[Lemma 2.1]{ADNGRT} for the convergence of the stochastic integral term, we arrive at $\overline{\P}-$a.s.,
\begin{align}\label{5.73}	\nonumber
		& \int_\1(\overline{\uu}-\nabla \overline{\pi}_h)(t)\cdot\bphi \,\d\x+\kappa\int_\1\nabla\overline{\uu}(t):\nabla\bphi \,\d\x \\&\nonumber= \int_\1\overline{\uu}_0\cdot\bphi \,\d\x+\kappa\int_\1\nabla\overline{\uu}_0:\nabla\bphi \,\d\x -\nu\int_{0}^{t}\int_\1\big(\overline{{\bcH}}_1-\overline{\pi}_1\I\big):\nabla\bphi\,\d\x\d s\\& \quad+\int_{0}^{t}\int_\1\operatorname{div}\big(\overline{{\bcH}}_2-\overline{\pi}_2\I\big)\cdot\bphi\,\d\x\d s  +\int_0^t\int_\1 \Phi(\overline{\uu})\d\overline{\W}(s)\cdot\bphi\,\d\x+\int_0^t\int_\1\overline{\Phi}_\pi \d\overline{\W}(s)\cdot\bphi \,\d\x,
\end{align} {for all $\bphi \in \mathrm{C}_0^\infty(\1)^d$}, where $\overline{{\bcH}}_1:=\overline{\bfS}$ and $\overline{{\bcH}}_2:=\overline{\uu}\otimes \overline{\uu}+\nabla \Delta^{-1}\overline{\f}$.

 Now, our aim is to show that $\overline{\bfS}=\bfA(\overline{\uu})$. Let us set
\begin{align*}	
		\overline{\mathtt{G}}_1^n&:=\bfA(\overline{\uu}_n)-\overline{\bfS},\\
		\overline{\mathtt{G}}_2^n&:=\overline{\uu}_n\otimes \overline{\uu}_n-\overline{\uu}\otimes\overline{\uu}+ \nabla \Delta^{-1} (\overline{\f}_n-\overline{\f})+\nabla\Delta^{-1}\bigg(\frac{1}{n}|\overline{\uu}_n|^{q-2}\overline{\uu}_n\bigg),\\
		\overline{\theta}_h^n&:=\overline{\pi}_h^n-\overline{\pi}_h,\; \overline{\theta}_1^n:=\overline{\pi}_1^n-\overline{\pi}_1,\; \overline{\theta}_2^n:=\overline{\pi}_2^n-\overline{\pi}_2.
	\end{align*}
The following {convergence results hold true}:
\begin{alignat}{2}
	\label{5.74}
	\overline{\uu}_n -\overline{\uu}	&	 \xrightharpoonup[n\to\infty]{} 0, \quad&&\text{in}\quad
\L^{\frac{\gamma p}{2}}({\overline{\Omega},\overline{\mathscr{F}},\overline{\P}};\L^p(0,T;{\W_0^{1,p}(\1)^d})),\\
	\label{5.75}
	\overline{\uu}_n -\overline{\uu}	&	 \xrightharpoonup[n\to\infty]{} 0,\quad&& \text{in}\quad \L^\gamma({\overline{\Omega},\overline{\mathscr{F}},\overline{\P}};{\L^\rho(0,T;\bfV)}), \ \forall\ \rho<\infty,\\
	\label{5.76}
	\overline{\mathtt{G}}_1^n&\xrightharpoonup[n\to\infty]{} 0,\quad&& \text{in}\quad
\L^{\frac{\gamma p'}{2}}({\overline{\Omega},\overline{\mathscr{F}},\overline{\P}};\L^{p'}(0,T;{\L^{p'}(\1)^{d\times d}})),\\
	\label{5.77}
	\overline{\mathtt{G}}_2^n&\xrightharpoonup[n\to\infty]{} 0,\quad&& \text{in}\quad \L^{q_0}({\overline{\Omega},\overline{\mathscr{F}},\overline{\P}};\L^{q_0}(0,T;{\W^{1,q_0}(\1)^{d\times d}})),\\
	\label{5.78}
	\Phi(\overline{\uu}_n)-\Phi(\overline{\uu}) & \xrightharpoonup[n\to\infty]{} 0,\quad&& \text{in}\quad \L^{\gamma}({\overline{\Omega},\overline{\mathscr{F}},\overline{\P}};{\L^\rho}(0,T;\mathcal{L}_2(\bfU,{\L^2(\1)^d}))),\ \forall\ \rho<\infty.
\end{alignat}
For {the} pressure terms, we have the following {convergence results}:
\begin{alignat}{2}
	\label{5.79}
	\overline{\theta}_h^n&\xrightarrow[n\to\infty]{} 0, \quad&&\text{in}\quad \L^\gamma({\overline{\Omega},\overline{\mathscr{F}},\overline{\P}};{\L^\rho(0,T;\W^{j,\rho}(\1))}),\ \forall\ \rho<\infty,\\
	\label{5.80}
	\overline{\theta}_1^n&\xrightharpoonup[n\to\infty]{} 0, \quad&&\text{in}\quad  \L^{\frac{\gamma p'}{2}}({\overline{\Omega},\overline{\mathscr{F}},\overline{\P}};\L^{p'}(0,T;\L^{p'}(\1))),\\
	\label{5.81}
	\overline{\theta}_2^n&\xrightharpoonup[n\to\infty]{} 0, \quad&&\text{in}\quad \L^{q_0}({\overline{\Omega},\overline{\mathscr{F}},\overline{\P}};\L^{q_0}(0,T;\W
	^{1,q_0}(\1))),\\
	\label{5.82}
	\Phi_\pi(\overline{\uu}_n)-\Phi_\pi(\overline{\uu}) &\xrightharpoonup[n\to\infty]{} 0,\quad &&\text{in}\quad \L^{\gamma}({\overline{\Omega},\overline{\mathscr{F}},\overline{\P}};{\L^\rho}(0,T;\mathcal{L}_2(\bfU,{\L^2(\1)^d}))),\ \forall\  \rho<\infty.
\end{alignat}
Moreover, we have
\begin{alignat}{2}
	\label{5.83}
	\overline{\theta}_h^n&\in \L^\gamma({\overline{\Omega},\overline{\mathscr{F}},\overline{\P}};\L^\infty(0,T;\L^2(\1))),\\
	\label{5.84}
\Phi(\overline{\uu}_n),\Phi_\pi(\overline{\uu}_n)  &\in \L^{\gamma}({\overline{\Omega},\overline{\mathscr{F}},\overline{\P}};\L^\infty(0,T;\mathcal{L}_2(\bfU,{\L^2(\1)^d}))),
\end{alignat}uniformly in $n$.
We can rewrite the difference between the approximate equation \eqref{5.0073} and limit equation \eqref{5.73} as $\overline{\P}-$a.s.,
\begin{align}\label{5.86}	\nonumber
		&	\int_\1(\overline{\uu}_n-\overline{\uu}-\nabla \overline{\theta}_h^n )(t)\cdot\bphi\,\d\x+\kappa\int_\1(\nabla(\overline{\uu}_n-\overline{\uu})(t)):\nabla \bphi\,\d\x\\&\nonumber=	 	\int_\1(\overline{\uu}_0^n-\overline{\uu}_0 )\cdot\bphi\,\d\x+\kappa\int_\1(\nabla(\overline{\uu}_0^n-\overline{\uu}_0)):\nabla\bphi\,\d\x  -\nu\int_{0}^{t}\int_\1\big(\overline{\mathtt{G}}_1^n-\overline{\theta}_1^n\I\big):\nabla\bphi \,\d\x\d s\\&\nonumber\quad +\int_{0}^{t}\int_\1\operatorname{div}\big(\overline{\mathtt{G}}_2^n-\overline{\theta}_2^n\I\big)\cdot\bphi\,\d\x \d s +\int_0^t\int_\1\left( \Phi(\overline{\uu}_n)\d\overline{\W}_n(s)-\Phi(\overline{\uu})\d \overline{\W}(s)\right)\cdot\bphi\,\d\x \\&\quad+\int_0^t\int_\1 \left( \Phi_{\pi}(\overline{\uu}_n)\d\overline{\W}_n-\Phi_{\pi}(\overline{\uu}) \d\overline{\W}(s)\right)\cdot\bphi \,\d\x,
	\end{align}
for all $\bphi\in {\mathrm{C}_0^\infty(\1)^d}$.

	We introduce the sequence $\overline{\vv}_n:= \overline{\uu}_n-\nabla \overline{\pi}_h^n$, and the sequence $\overline{\vv}_{n,1}=\overline{\vv}_n-\overline{\vv}$, where $\overline{\vv}=\overline{\uu}-\nabla \overline{\pi}_h,$ for which we have
\begin{alignat}{2}
\label{5.87}
\overline{\vv}_{n,1} &\xrightharpoonup[n\to\infty]{} 0,\quad&& \text{in}\quad \L^p({\overline{\Omega},\overline{\mathscr{F}},\overline{\P}};\L^p(0,T;{\W_0^{1,p}(\1)^d})),\\
\label{5.88}
\overline{\vv}_{n,1} &\xrightarrow[n\to\infty]{} 0, \quad&&\text{in}\quad \L^\rho({\overline{\Omega},\overline{\mathscr{F}},\overline{\P}}; \L^\rho(0,T;\L_{\sigma}^\rho(\1)^d)).
\end{alignat}
Thus, for all $\bphi\in {\mathrm{C}_0^\infty(\1)^d}$, we arrive at $\overline{\P}-$a.s.,
\begin{align}\label{5.89}\nonumber
&\int_\1(\I-\kappa\Delta)\overline{\vv}_{n,1}(t)\cdot \bphi\,\d\x \\&\nonumber=\int_\1\overline{\vv}_0^{n,1}\cdot\bphi\,\d\x+\nu\int_{0}^{t}\int_\1\operatorname{div}  \big(\overline{\mathtt{G}}_1^{n}-\overline{\theta}_1^{n}\I\big)\cdot\bphi\,\d\x\d s\\&\quad\nonumber+\int_{0}^{t}\int_\1\operatorname{div} \big(\overline{\mathtt{G}}_2^{n}-\overline{\theta}_2^{n}\I\big)\cdot\bphi \,\d\x\d s +\int_0^t\int_\1\left( \Phi(\overline{\uu}_n)\d\overline{\W}_n(s)-\Phi(\overline{\uu})\d \overline{\W}(s)\right)\cdot\bphi\,\d\x \\&\quad+\int_0^t\int_\1 \left( \Phi_{\pi}(\overline{\uu}_n)\d\overline{\W}_n-\Phi_{\pi}(\overline{\uu}) \d\overline{\W}(s)\right)\cdot\bphi \,\d\x.
\end{align}Let us prove that if we test the above expression with $\overline{\vv}_{n,1}$ (validity of It\^o's formula), then the right hand side of \eqref{5.89} is well-defined. 
In view of the regularity of $\overline{\vv}_{n,1}$, we only need to work on the third term of the right hand side of \eqref{5.89}.
Assume that $d\neq 2$, as the case of $d=2$ is easier. By Sobolev's embedding, we have \begin{align*}
	\bfV\hookrightarrow\L^{q_0'}(\1)^d,\ \  \text{ for } \ \ q_0' \leq \frac{2d}{d-2}\ \  \text{ or }\ \  q_0\geq \frac{2d}{d+2}.
\end{align*}From \eqref{Q0}, we obtain
\begin{align*}
	\frac{2d}{d+2} \leq q_0\leq \frac{d}{d-1}, \quad \mbox{for} \quad d\leq 4.
\end{align*}Combining the above facts, we conclude that
\begin{align}\label{4110}
	&\overline{\E}\bigg[\int_{0}^{t}\int_\1\operatorname{div} \big(\overline{\mathtt{G}}_2^{n}-\overline{\theta}_2^{n}\I\big)\cdot\overline{\vv}_{n,1} \,\d\x\d s \bigg] \nonumber\\&\leq \overline{\E}\bigg[\int_0^t\int_{\1}\big\|\operatorname{div} \big(\overline{\mathtt{G}}_2^{n}-\overline{\theta}_2^{n}\big)\big\|_{q_0}\| \overline{\vv}_{n,1}\|_{q_0'}\d \x\d s\bigg]\nonumber\\&\leq \overline{\E}\bigg[\left(\int_0^t\int_{\1}\| \overline{\vv}_{n,1}\|_{q_0'}^{q_0'}\d \x\d s\right)^{\frac{1}{q_0'}} \left(\int_0^t\int_{\1} \|\operatorname{div} \big(\overline{\mathtt{G}}_2^{n}-\overline{\theta}_2^{n}\big)\big\|_{q_0}^{q_0}\d \x\d s\right)^{\frac{1}{q_0}}\bigg] \\&\leq T^{\frac{1}{q_0'}} \bigg\{\overline{\E}\bigg[\sup_{s\in[0,t]}\|\nabla \overline{\vv}_{n,1}(s)\|_2^{q_0'}\bigg]\bigg\}^{\frac{1}{q_0'}}\bigg\{\overline{\E}\bigg[\int_0^t\int_{\1} \|\operatorname{div} \big(\overline{\mathtt{G}}_2^{n}-\overline{\theta}_2^{n}\big)\big\|_{q_0}^{q_0}\d \x\d s\bigg] \bigg\}^{\frac{1}{q_0}}\nonumber\\& <\infty,\nonumber
\end{align}
where we have used  H\"older's inequality.

\begin{claim}\label{claim:5}
\begin{align}\label{5.36B}
	\overline{\bfS}=\bfA(\overline{\uu}).
\end{align}
	\end{claim}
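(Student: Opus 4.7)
The plan is to identify $\overline{\bfS}=\bfA(\overline{\uu})$ by the Minty monotonicity argument already used in the proof of Claim~\ref{claim:2}, but now carried out at the level of the difference equation \eqref{5.89}, which carries pressure contributions coming from the decomposition of Section~\ref{sec4}. Concretely, I would apply the infinite-dimensional It\^o formula of \cite[Theorem 2.1]{IGDS} to the square of the $(\bfI-\kappa\Delta)^{1/2}$-norm of $\overline{\vv}_{n,1}=\overline{\uu}_n-\overline{\uu}-\nabla\overline{\theta}_h^n$, testing \eqref{5.89} with $\overline{\vv}_{n,1}$ itself. The legitimacy of this test, i.e.\ the finiteness of every resulting pairing, is precisely what is verified in \eqref{4110}: it uses the Sobolev embedding $\bfV\hookrightarrow \L^{q_0'}(\1)^d$, which is why the restrictions $d\leq 4$ and $p>\tfrac{2d}{d+2}$ enter. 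Taking expectations annihilates the martingale terms, while the quadratic variation of the noise is controlled via the Lipschitz assumption \eqref{3.6a} together with \eqref{5.65}--\eqref{5.66}.

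After the It\^o expansion, I would rewrite $\overline{\mathtt{G}}_1^n=\bfA(\overline{\uu}_n)-\overline{\bfS}$ and split
$$ \overline{\mathbb{E}}\bigg[\int_0^T\!\!\int_\1 \overline{\mathtt{G}}_1^n:\bfD(\overline{\vv}_{n,1})\,\d\x\,\d t\bigg]=\overline{\mathbb{E}}\bigg[\int_0^T\!\!\int_\1(\bfA(\overline{\uu}_n)-\bfA(\overline{\uu})):\bfD(\overline{\uu}_n-\overline{\uu})\,\d\x\,\d t\bigg]+R_n, $$
where $R_n$ collects the contributions of $\nabla\overline{\theta}_h^n$ and of cross terms of the form $(\bfA(\overline{\uu})-\overline{\bfS}):\bfD(\overline{\uu}_n-\overline{\uu})$. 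By the strong convergence \eqref{5.79} of $\overline{\theta}_h^n$ together with \eqref{5.46}--\eqref{5.47} and \eqref{5.74}, each term in $R_n$ converges to zero. The right-hand side of the It\^o identity consists of: (i) the initial-data term $\overline{\mathbb{E}}\int_\1(\bfI-\kappa\Delta)(\overline{\uu}_0^n-\overline{\uu}_0)\cdot\overline{\vv}_{n,1}(0)\,\d\x$, which vanishes by \eqref{5.68}; (ii) the convective/pressure term handled exactly as in \eqref{4110}, combining the uniform $\L^{q_0}$-bound on $\operatorname{div}(\overline{\mathtt{G}}_2^n-\overline{\theta}_2^n\bfI)$ with the strong convergence \eqref{5.88} of $\overline{\vv}_{n,1}$ in $\L^\rho(0,T;\L^\rho_\sigma(\1)^d)$; (iii) the stabilization term $\tfrac{1}{n}\bfa(\overline{\uu}_n)$, which is nonnegative when paired with $\overline{\uu}_n$ and vanishes in pairing with $\overline{\uu}$ thanks to \eqref{5.044}; and (iv) the noise cross variations, which converge by \eqref{5.71}--\eqref{5.72} and the Lipschitz bound.

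The principal obstacle is precisely item (ii): one must absorb a quadratic-in-$\overline{\uu}_n$ divergence against the test $\overline{\vv}_{n,1}$, which is only weakly convergent in $\bfV$. The way out is the Sobolev pairing of \eqref{4110}, which requires $q_0\geq\tfrac{2d}{d+2}$ on one side and $q_0\leq \tfrac{d}{d-1}$ on the other (coming from \eqref{Q0}); reconciling these intervals fixes the standing hypotheses $d\leq 4$ and $p>\tfrac{2d}{d+2}$ assumed in Theorem~\ref{thm:exist}. Assembling everything yields
$$ \limsup_{n\to\infty}\overline{\mathbb{E}}\bigg[\int_0^T\!\!\int_\1(\bfA(\overline{\uu}_n)-\bfA(\overline{\uu})):\bfD(\overline{\uu}_n-\overline{\uu})\,\d\x\,\d t\bigg]\leq 0. $$

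From this, I would conclude by invoking the monotonicity inequalities \eqref{2.2}--\eqref{2.3} of Lemma~\ref{GM}. When $p\geq 2$, \eqref{2.2} directly gives $\bfD(\overline{\uu}_n)\to\bfD(\overline{\uu})$ in $\L^p(\overline{\Omega}\times\1_T)$; for $1<p<2$, one combines \eqref{2.3} with a H\"older splitting against the uniform $\L^p$-bound on $\bfD(\overline{\uu}_n)$ furnished by \eqref{5.41}, as in \cite[Appendix A]{JW}, to obtain the $\overline{\P}\otimes\lambda^{d+1}$-a.e.\ convergence $\bfD(\overline{\uu}_n)\to\bfD(\overline{\uu})$. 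Continuity of the Nemytskii map $\bfM\mapsto|\bfM|^{p-2}\bfM$, together with the weak convergence \eqref{5.46} and Vitali's theorem applied via the uniform $\L^{p'}$-bound on $\bfA(\overline{\uu}_n)$, then identifies the weak limit as $\overline{\bfS}=\bfA(\overline{\uu})$, completing Claim~\ref{claim:5}.
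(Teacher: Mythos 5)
Your proposal follows the same overall strategy as the paper's proof: apply the infinite-dimensional It\^o formula to $\|(\bfI-\kappa\Delta)^{1/2}\overline{\vv}_{n,1}(\cdot)\|_2^2$, justify the pairing in the convective/pressure term via the Sobolev estimate of \eqref{4110}, take expectations, pass each remainder term to zero using the strong and weak convergences from Skorokhod, arrive at the Minty inequality $\limsup_n\overline{\E}\big[\int_0^T\int_\1(\bfA(\overline{\uu}_n)-\bfA(\overline{\uu})):\bfD(\overline{\uu}_n-\overline{\uu})\,\d\x\d t\big]\leq 0$, and then use the monotonicity bounds \eqref{2.2}--\eqref{2.3} to upgrade to pointwise a.e.\ convergence of $\bfD(\overline{\uu}_n)$ and identify $\overline{\bfS}=\bfA(\overline{\uu})$.

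There is, however, a genuine gap in your treatment of the noise quadratic-variation term. You assert that it is ``controlled via the Lipschitz assumption \eqref{3.6a} together with \eqref{5.65}--\eqref{5.66}.'' The Lipschitz bound combined with the strong $\L^2$-convergence \eqref{5.66} of $\overline{\uu}_n$ does take care of the piece $\big\langle\int_0^{\cdot}\big(\Phi(\overline{\uu}_n)-\Phi(\overline{\uu})\big)\,\d\overline{\W}_n\big\rangle$. But the cross term coming from the \emph{difference of the Wiener processes}, namely $\big\langle\int_0^{\cdot}\Phi(\overline{\uu})\,\d(\overline{\W}_n-\overline{\W})\big\rangle$, cannot be handled by \eqref{3.6a} alone: after expanding in the basis $\{\bfe_k\}$, one faces $\sum_i|\phi_i(\overline{\uu})|^2\,\Var\big(\overline{\beta}_i^n(1)-\overline{\beta}_i(1)\big)$, and the only smallness available is $\|\overline{\W}_n-\overline{\W}\|_{\bfU_0}\to 0$ in \eqref{5.65}. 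Since $\|\cdot\|_{\bfU_0}$ carries the weight $k^{-2}$, one must compensate by pulling out $\sup_k k^2|\phi_k(\overline{\uu})|^2$, which is precisely what the extra decay hypothesis \eqref{3.6b} provides. Without invoking \eqref{3.6b}, the estimate for this term does not close, and the argument stalls before reaching the Minty inequality. Apart from this omission, the remaining steps (vanishing of the initial-data terms via \eqref{5.68}, of the convective/pressure contribution via \eqref{5.77}, \eqref{5.81}, \eqref{5.88}, of the harmonic-pressure remainder via \eqref{5.79}, and the final identification via Lemma~\ref{GM} and Vitali) match the paper's proof.
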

\begin{proof}[Proof of Claim~\ref{claim:5}]
 Using density arguments, we can choose our test functions $\bphi \in\W_0^{1,p}(\1)^d\cap \W_0^{1,2}(\1)^d\cap\L^{q_0'}(\1)^d$. In view of the regularity of $\overline{\vv}_{n,1}(\cdot)$, we can apply the infinite-dimensional It\^o formula to the process $\|(\I-\kappa\Delta)^{\frac{1}{2}}\overline{\vv}_{n,1}(\cdot)\|_{\bfH}^2$, to find (see Subsection \ref{PU} for a justification)
\begin{align}\label{MT01}\nonumber
	&\|\overline{\vv}_{n,1}(t)\|_{2}^2+\kappa\|\nabla \overline{\vv}_{n,1}(t)\|_{2}^2\\&\nonumber=	\|\overline{\vv}^{n,1}_0\|_{2}^2+\kappa\|\nabla \overline{\vv}^{n,1}_0\|_{2}^2+2\nu \int_0^t\int_{\1} \operatorname{div}\big(\overline{\mathtt{G}}_1^{n}-\overline{\theta}_1^{n}\I\big)\cdot\overline{\vv}_{n,1}\d \x\d s\\&\nonumber \quad +2\int_0^t\int_{\1} \operatorname{div} \big(\overline{\mathtt{G}}_2^{n}-\overline{\theta}_2^{n}\big)\cdot\overline{\vv}_{n,1} \d \x\d s+ 2\int_0^t\int_\1 \big(\Phi(\overline{\uu}_n)\d \overline{\W}_n(s)-\Phi(\overline{\uu})\d\overline{\W}(s)\big) \cdot\overline{\vv}_{n,1}\d \x \\&\nonumber\quad + 2\int_0^t\int_\1 \big(\Phi_{\pi}(\overline{\uu}_n)\d \overline{\W}_n(s)-\Phi_{\pi}(\overline{\uu})\d\overline{\W}(s)\big)\cdot\overline{\vv}_{n,1}\d \x \\&\nonumber\quad +\int_0^t\int_\1\d \bigg\langle \int_0^{\cdot} \Phi (\overline{\uu}_n)\d\overline{\W}_n-\int_0^{\cdot}\Phi(\overline{\uu})\d\overline{\W}\bigg\rangle_s\d s\\&\nonumber\quad +\int_0^t\int_\1\d \bigg\langle \int_0^{\cdot} \Phi_{\pi} (\overline{\uu}_n)\d\overline{\W}_n-\int_0^{\cdot}\Phi_{\pi}(\overline{\uu})\d\overline{\W}\bigg\rangle_s\d s\\&	=: \sum_{j=1}^{8} I_j^n.
\end{align}
Using the strong convergence \eqref{5.68} and Theorem \ref{pr:th:1} (2), we obtain $\overline{\E}\big[I_1^n\big],\ \overline{\E}\big[I_2^n\big]\xrightarrow[n\to\infty]{}   0$. Now, our aim is to show that the expected values of the terms $I_j^n$ for $j=4,\ldots, 8$ goes to $0$. We know that the expectations of the terms $I_5^n$ and $I_6^n$ are equal to $0$, being  local martingale terms.

The strong convergence \eqref{5.88} gives
\begin{align*}
	\overline{\vv}_{n,1} \xrightarrow[n\to\infty]{} 0, \quad\text{in}\quad \L^{q_0'}({\overline{\Omega},\overline{\mathscr{F}},\overline{\P}}; \L^{q_0'}(0,T;\L_{\sigma}^{q_0'}(\1)^d)), \mbox{ for } \frac{pd}{pd-d+2}\leq  q_0\leq \frac{d}{d-1}.
\end{align*}
Using \eqref{5.77}, \eqref{5.81} and the above consequence of strong convergence \eqref{5.88}, we conclude that $\overline{\E}\big[I_4^n\big]\xrightarrow[n\to\infty]{} 0$ (cf. \eqref{4110}).

Let us consider the term $I_7^n$, and estimate it using the Cauchy-Schwarz inequality as
\begin{align*}
	I_7^n &=\int_0^t\int_\1\d\bigg\langle \int_{0}^{\cdot}\big(\Phi(\overline{\uu}_n)-\Phi(\overline{\uu})\big)\d\W_n\bigg\rangle_s \,\d\x+\int_0^t\int_\1\d\bigg\langle \int_{0}^{\cdot}\Phi(\overline{\uu})\d\big(\overline{\W}_n-\overline{\W}\big)\bigg\rangle_s \,\d\x\\&\quad+\int_0^t\int_\1\d\bigg\langle \int_{0}^{\cdot}\big(\Phi(\overline{\uu}_n)-\Phi(\overline{\uu})\big)\d\overline{\W}_n, \int_{0}^{\cdot}\Phi(\overline{\uu})\d\big(\overline{\W}_n-\overline{\W}\big)\bigg\rangle_s \,\d\x
	\\&
	\leq C \int_0^t\int_\1\d\bigg\langle \int_{0}^{\cdot}\big(\Phi(\overline{\uu}_n)-\Phi(\overline{\uu})\big)\d\overline{\W}_n\bigg\rangle_s \,\d\x+C \int_0^t\int_\1\d\bigg\langle \int_{0}^{\cdot}\Phi(\overline{\uu})\d\big(\overline{\W}_n-\overline{\W}\big)\bigg\rangle_s \,\d\x\\&
	=: C(I_{71}^{n}+I_{72}^{n}).
\end{align*}Using \eqref{3.6a} and  \eqref{5.66}, we find
\begin{align*}
	\overline{\E}[I_{71}^{n}]
	\leq C\overline{\E}\bigg[\int_{0}^{t} \big\|\Phi(\overline{\uu}_n)-\Phi(\overline{\uu})\big\|_{\mathcal{L}_2}^2\d s\bigg]\leq C\overline{\E}\bigg[\int_{0}^{t}\|\overline{\uu}_n-\overline{\uu}\|_2^2\d s\bigg] \xrightarrow[n\to\infty]{} 0.
\end{align*}For the term $I_{72}^{n}$, we {use the fact that} $\overline{\uu}\in \L^2({\overline{\Omega},\overline{\mathscr{F}},\overline{\P}};\L^2(0,T;\bfV))$, \eqref{3.6b} and \eqref{5.65}, to get
\begin{align*}
\overline{\E}[I_{72}^{n}] &=  \overline{\E}\bigg[\int_{0}^{T}\sum_i\bigg(\int_\1 \big|\phi_i(\overline{\uu})\big|^2\Var \big(\overline{\beta}_i^n(1)-\overline{\beta}_i(1)\big)\,\d\x\bigg)\d t\bigg] \\& \leq \overline{\E}\bigg[\int_{0}^{T}\bigg(\int_{\1} \sup_i i^2|\phi_i(\overline{\uu})|^2\,\d\x\bigg)\d t\bigg]\sum_i\frac{1}{i^2}\Var \big(\overline{\beta}_i^n(1)-\overline{\beta}_i(1)\big)
\\& \leq C \overline{\E}\bigg[\int_{0}^{T}\bigg(\int_{\1} \big(1+|\overline{\uu}|^2\big)\,\d\x\bigg)\d t\bigg]\sum_i\frac{1}{i^2}\Var \big(\overline{\beta}_i^n(1)-\overline{\beta}_i(1)\big)
\\&
\leq  C \overline{\E}\bigg[\int_{0}^{T}\bigg(\int_{\1} \big(1+|\overline{\uu}|^2\big)\,\d\x\bigg)\d t\bigg]\E\big[\|\overline{\W}_n(1)-\overline{\W}(1)\|_{\bfU_0}^2\big]
\\&  \leq C\overline{\E}\bigg[\int_{0}^{T} \big(1+\|\overline{\uu}\|_{2}^2\big)\,\d t\bigg]\overline{\E}\left[\|\overline{\W}_n-\overline{\W}\|_{\mathrm{C}([0,T];\bfU_0)}^2\right]\xrightarrow[n\to\infty]{} 0.
\end{align*}We know that $\Phi_\pi$ inherits the properties of $\Phi$. Therefore, $I_8^n$ can be estimated in a similar manner.

Taking expectation in \eqref{MT01}, we find
\begin{align*}
&	2\nu \overline{\E}\bigg[\int_0^t\int_{\1}  \overline{\mathtt{G}}_1^{n}:\bfD(\overline{\vv}_{n,1})\d \x\d s\bigg]\\&=-\overline{\E}\big[\|\overline{\vv}_{n,1}(t)\|_{2}^2+\kappa\|\nabla \overline{\vv}_{n,1}\|_{2}^2\big] +\sum_{j=1}^{2}\overline{\E} \big[I_j^n\big]+2\nu\overline{\E}\bigg[\int_0^t\int_{\1} \nabla \overline{\theta}_1^n\cdot \overline{\vv}_{n,1}\d\x\d s\bigg] +\sum_{j=4}^{8}\overline{\E}\big[I_j^n\big].
\end{align*}
 Combining the above estimates, we arrive at
 \begin{align}\label{4113}
	\limsup_{n\to\infty}\overline{\E}\bigg[\int_0^t\int_{\1}  \big(\bfA(\overline{\uu}_n)-\overline{\bfS}\big):\big(\bfD(\overline{\uu}_{n})-\bfD(\overline{\uu})\big)\d \x\d s\bigg]\leq0,
 \end{align}where we have used the fact that $\operatorname{div}\overline{\vv}_{n,1}=0$.
 Let us now consider
 \begin{align}\label{4114}
 	&	\limsup_{n\to\infty}\overline{\E}\bigg[\int_0^t\int_{\1}  \big(\bfA(\overline{\uu}_n)-\bfA(\overline{\uu})\big):\big(\bfD(\overline{\uu}_{n})-\bfD(\overline{\uu})\big)\d \x\d s\bigg]\nonumber\\&\leq \limsup_{n\to\infty}\overline{\E}\bigg[\int_0^t\int_{\1}  \big(\bfA(\overline{\uu}_n)-\overline{\bfS}:\big(\bfD(\overline{\uu}_{n})-\bfD(\overline{\uu})\big)\d \x\d s\bigg]\nonumber\\&\quad +\limsup_{n\to\infty}\overline{\E}\bigg[\int_0^t\int_{\1}  \big(\overline{\bfS}-\bfA(\overline{\uu})\big):\big(\bfD(\overline{\uu}_{n})-\bfD(\overline{\uu})\big)\d \x\d s\bigg]\leq 0,
 \end{align}
 where we have used \eqref{4113} and also the weak convergence \eqref{5.74}.
 Using the monotonicity of the operator $\bfA(\cdot)$ (see \eqref{2.4}), we  also obtain
 \begin{align}\label{4115}
 	&	\limsup_{n\to\infty}\overline{\E}\bigg[\int_0^t\int_{\1}  \big(\bfA(\overline{\uu}_n)-\bfA(\overline{\uu})\big):\big(\bfD(\overline{\uu}_{n})-\bfD(\overline{\uu})\big)\d \x\d s\bigg]\geq 0.
 \end{align}
 Combining \eqref{4114}, \eqref{4115}, \eqref{2.2} and \eqref{2.3}, one can deduce that (cf. \cite[Eqn. (1.6)]{LBJMMS})
\begin{align*}
	\bfD(\overline{\uu}_n)\xrightarrow[n\to\infty]{}\bfD(\overline{\uu}), \ \ \overline{\P}\otimes \lambda^{d+1}-\text{a.e.}
\end{align*}The above convergence justifies the limit procedure in the energy estimate, that is, $\overline{\bfS}=\bfA(\overline{\uu})$, which completes the proof of Claim~\ref{claim:5} (for more details, see \cite[Appendix A]{JW}).
\end{proof}
This completes the proof of Theorem \ref{thm:exist}.
\end{proof}

\begin{remark}
	Since  $\overline{\uu}\in\L^\infty(0,T;\bfV)\cap \L^p(0,T;{\W_{0}^{1,p}(\1)^d})$ is regular,  we note that in order to obtain the Claim \ref{claim:5}, neither Lipschitz truncation (\cite{HBG}) nor $\L^{\infty}$-truncation (\cite{Breit,JW}) are required.
\end{remark}

\subsection{Pathwise uniqueness of solution}\label{PU}
We rewrite the equation \eqref{1.1} as follows:
	\begin{equation}\label{UU1}
	\begin{aligned}
		\d(\I-\kappa\Delta)^{\frac{1}{2}} \uu & =(\I-\kappa\Delta)^{-\frac{1}{2}}\big\{\operatorname{div} \big(\nu \bfA(\uu)- (\uu\otimes\uu)-\pi \I
		-\bfF \big)\big\}\d t\\&\quad+(\I-\kappa\Delta)^{-\frac{1}{2}}\Phi(\uu)\d \W(t),
	\end{aligned}
\end{equation}
where $\bfF\in\L^2(0,T;{\W^{1,2}(\1)^{d\times d}})$ such that $\operatorname{div} \bfF =-\f$.

The existence of a solution $\uu$ is already established in Theorem \ref{thm:exist} and  $\uu\in \L^\infty(0,T;\bfV)\cap \L^p(0,T;{\W_0^{1,p}(\1)^d})$, $\P-$a.s., for $p>\frac{2d}{d+2}$. Let us set  $\vv(\cdot):=(\I-\kappa\Delta)^{\frac{1}{2}}\uu(\cdot)$. In order to establish the energy equality (It\^o's formula), in view of \cite[Theorem 2.1, Eqn. (1.2)]{IGDS}, we only need to show that
 \begin{align*}
	\vv^*=(\I-\kappa\Delta)^{-\frac{1}{2}}\big\{\operatorname{div} \big( \nu\bfA(\uu)- (\uu\otimes\uu)-\pi \I
	-\bfF \big)\big\}\in\L^1(0,T;{\L^2(\1)^d}).
\end{align*}
In order to  verify $\vv^*\in\L^1(0,T;{\L^2(\1)^d})$, we consider
\begin{align*}
&	\int_0^t\|\vv^*(s)\|_{2}\d s\\&\leq \int_0^t \|(\I-\kappa\Delta)^{-\frac{1}{2}}\operatorname{div} \bfF (s)\|_{2}\d s+\int_0^t\|(\I-\kappa\Delta)^{-\frac{1}{2}}\operatorname{div} \pi \I \|_{2}\d s\\& \quad +\nu\int_0^t \|(\I-\kappa\Delta)^{-\frac{1}{2}}\operatorname{div} \bfA(\uu(s))\|_{2}\d s+ \int_0^t \|(\I-\kappa\Delta)^{-\frac{1}{2}}\operatorname{div} (\uu(s)\otimes\uu(s))\|_{2}\d s\\&=:\sum_{i=1}^{4}I_i.
\end{align*}We consider the term $I_1$ and estimate it using H\"older's inequality, to find
\begin{align*}
	I_1 \leq C\int_0^t\|\bfF(s)\|_{2}\d s \leq C\|\bfF\|_{\L^2(0,T;{\W^{1,2}(\1)^{d\times d}})}<\infty.
\end{align*}Similarly, we can estimate the term $I_2$ as
\begin{align*}
	I_2 \leq C\int_0^t\|\pi(s)\|_{\L^2}\d s\leq C \|\pi\|_{\L^2(0,T;\L^2(\1))}<\infty.
\end{align*}
Now, we consider the term $I_3$ and estimate it in the following way:
\begin{align*}
	I_3\leq C\nu\int_0^t\||\D(\uu(s))|^{p-2}\D(\uu(s))\|_{2}\d s \leq C \int_0^t\|\nabla \uu(s)\|_{2(p-1)}^{p-1}\d s \leq C\sup_{t\in[0,T]}\|\nabla\uu(t)\|^{p-1}_{2}<\infty,
\end{align*}for $p\leq2$.

We estimate the final term $I_4$, with the help of the embedding {$\W^{1,2}_0(\1)^d\hookrightarrow \L^4(\1)^d$ for $2\leq d\leq 4$},  and H\"older's inequality as
\begin{align*}
	I_4  \leq C \int_0^t\|\uu(s)\otimes \uu(s)\|_{2}\d s \leq C\int_0^t \|\uu(s)\|^2_{4}\d s\leq C \sup_{t\in[0,T]}\|\nabla\uu(s)\|^2_{2}<\infty.
\end{align*}

Combining the estimates of $I_1-I_4$, we deduce that $\vv^*\in\L^1(0,T;{\L^2(\1)^d})$ and hence we can use It\^o's formula (see \cite[Theorem 2.1]{IGDS}) to obtain the following energy equality, $\P$-a.s.,
\begin{align}\label{EQ}\nonumber
&	\|\uu(t)\|^2_{2}+{\kappa\|\nabla\uu(t)\|_{2}^2}+2\nu \int_0^t\|\D(\uu(s))\|^p_{p}\d s\\&\nonumber=	\|\uu_0\|^2_{2}+{\kappa\|\nabla\uu_0\|_{2}^2}+2\int_0^t\big(\f(s),\uu(s)\big)\d s +\int_0^t\|\Phi(\uu(s))\|^2_{2}\d s\\&\quad+2\int_0^t\big(\Phi(\uu(s))\d\W(s),\uu(s)\big),
\end{align}for all $t\in[0,T]$, where $p\in \big(\frac{2d}{d+2},2\big]$.

For $p\geq 2$, we consider the Gelfand triplet ${\L^p(\1)^d}\subset \bfH \subset {\L^{p'}(\1)^d}$. In view of \cite[Theorem 2.1, Eqn. (1.2)]{IGDS}, we only need to show the following:
\begin{align}\label{AS2}
\nu	\int_0^t \|(\I-\kappa\Delta)^{-\frac{1}{2}}\operatorname{div} \bfA(\uu(s))\|^{p'}_{p'}\d s<\infty,
\end{align}
and the remaining terms can be estimated with the help of the embedding  $\L^{p'}(0,T;{\L^2(\1)^d})\subset \L^{p'}(0,T;{\L^{p'}(\1)^d})$, for $p\geq 2$. Let us verify $\eqref{AS2}$ in the following way:
\begin{align*}
	\nu\int_0^t \|(\I-\kappa\Delta)^{-\frac{1}{2}}\operatorname{div} \bfA(\uu(s))\|^{p'}_{p'}\d s\leq
C\int_0^t\|\bfA(\uu(s))\|^{p'}_{p'}\d s \leq C\int_0^t\|\nabla \uu(s)\|^p_{p}\d s<\infty.
\end{align*}

Combining  the estimates \eqref{AS2}, $I_1,I_2$ and $I_4$ imply the required energy equality \eqref{EQ} for the case $p\in[2,\infty)$.

Now, we discuss the pathwise uniqueness of the  solution to the system \eqref{1.1}-\eqref{1.4}.
\begin{theorem}[Uniqueness]\label{thrm:uni}
	Under the assumptions of Theorem \ref{thm:exist}, solution of the system \eqref{1.1}-\eqref{1.4} is \emph{pathwise unique}.
\end{theorem}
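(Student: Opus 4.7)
The plan is to test the difference equation for $\w:=\uu_1-\uu_2$ against $\w$ itself via It\^o's formula and to exploit the Voigt regularization $\kappa\Delta$ together with the Sobolev embedding $\bfV\hookrightarrow\L^4(\1)^d$, valid in dimensions $2\leq d\leq 4$. Subtracting the equations satisfied by $\uu_1$ and $\uu_2$, the process $\w$ solves, with $\w(0)=0$,
\begin{align*}
\d(\I-\kappa\Delta)\w
=\Big[\operatorname{div}\big(\nu(\bfA(\uu_1)-\bfA(\uu_2))\big) -\operatorname{div}(\uu_1\otimes\uu_1-\uu_2\otimes\uu_2)+\nabla\pi_{\w}\Big]\d t+\big(\Phi(\uu_1)-\Phi(\uu_2)\big)\d\W(t),
\end{align*}
where $\pi_{\w}$ is the difference of the associated pressures (which will disappear after testing with a divergence-free $\w$). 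The verification that It\^o's formula may be applied to $E(t):=\|\w(t)\|_{2}^{2}+\kappa\|\nabla\w(t)\|_{2}^{2}$ follows exactly as in the justification given in Subsection \ref{PU} for \eqref{EQ}, using the regularity $\uu_i\in\L^\infty(0,T;\bfV)\cap \L^p(0,T;\W^{1,p}_0(\1)^d)$.

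Applying It\^o's formula, I obtain
\begin{align*}
E(t)+2\nu\int_0^t\int_\1(\bfA(\uu_1)-\bfA(\uu_2)):\bfD(\w)\,\d\x\,\d s
&=-2\int_0^t\int_\1(\uu_1\otimes\uu_1-\uu_2\otimes\uu_2):\nabla\w\,\d\x\,\d s\\
&\quad+\int_0^t\|\Phi(\uu_1)-\Phi(\uu_2)\|_{\mathcal{L}_2}^2\,\d s+2\int_0^t\big((\Phi(\uu_1)-\Phi(\uu_2))\d\W(s),\w\big).
\end{align*}
The monotonicity inequality \eqref{2.4} allows me to drop the viscous term (it has the favorable sign), while the Lipschitz assumption \eqref{3.6a} gives $\|\Phi(\uu_1)-\Phi(\uu_2)\|_{\mathcal{L}_2}^2\leq L^2\|\w\|_{2}^{2}$. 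The crucial convective term is handled by writing $\uu_1\otimes\uu_1-\uu_2\otimes\uu_2=\w\otimes\uu_1+\uu_2\otimes\w$; since $\operatorname{div}\uu_1=0$, the first contribution vanishes, and the second is estimated by H\"older and Sobolev as
\begin{align*}
\left|\int_{\1}(\uu_2\otimes\w):\nabla\w\,\d\x\right|\leq \|\uu_2\|_{4}\|\w\|_{4}\|\nabla\w\|_{2}\leq C\|\nabla\uu_2\|_{2}\|\nabla\w\|_{2}^{2}\leq \frac{C}{\kappa}\|\nabla\uu_2\|_{2}E(t),
\end{align*}
which is precisely where the constraint $d\leq 4$ is used. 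This yields the pathwise differential inequality $\d E(t)\leq \big(\tfrac{C}{\kappa}\|\nabla\uu_2\|_{2}+L^2\big)E(t)\,\d t+\d M(t)$, with $M$ a local martingale.

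To conclude, I introduce the stopping times $\tau_N:=\inf\{t\in[0,T]:\int_0^t\|\nabla\uu_2(s)\|_{2}\,\d s\geq N\}\wedge T$, which satisfy $\tau_N\nearrow T$ $\P$-a.s.\ because $\uu_2\in\L^\infty(0,T;\bfV)$ almost surely. Setting $\Lambda(t):=\exp\big(-\int_0^t(\tfrac{C}{\kappa}\|\nabla\uu_2(s)\|_{2}+L^2)\,\d s\big)$ and applying It\^o's formula to $\Lambda(t)E(t)$, the drift terms involving $E$ cancel and one obtains $\Lambda(t\wedge\tau_N)E(t\wedge\tau_N)\leq \int_0^{t\wedge\tau_N}\Lambda\,\d M$. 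Since on $[0,\tau_N]$ the integrand of the stochastic term is bounded (by $L$ combined with $\|\w\|_{2}^{2}\leq E\leq$ something controlled by the stopping), the stopped process is a genuine martingale; taking expectations gives $\E[\Lambda(t\wedge\tau_N)E(t\wedge\tau_N)]\leq 0$, hence $E\equiv 0$ on $[0,\tau_N]$ almost surely, and passing $N\to\infty$ yields $\uu_1=\uu_2$ on $[0,T]$ $\P$-a.s. The main obstacle is the trilinear convective estimate, which dictates the dimensional restriction $d\leq 4$ already imposed in Theorem~\ref{thm:exist}; the Voigt term is essential here since it provides the $\kappa\|\nabla\w\|_{2}^{2}$ contribution to $E$ needed to absorb $\|\nabla\w\|_{2}^{2}$ on the right-hand side without relying on the $p$-Laplacian dissipation (which would be insufficient for $p<2$).
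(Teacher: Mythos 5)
Your overall strategy --- apply It\^o's formula to $E(t)=\|\w(t)\|_{2}^{2}+\kappa\|\nabla\w(t)\|_{2}^{2}$, drop the $p$-Laplacian difference by monotonicity \eqref{2.4}, control the convective term via $\|\w\|_{4}\leq C\|\nabla\w\|_{2}$ (valid for $2\leq d\leq4$), and absorb it with an exponential weight --- is precisely the mechanism used in the paper, and incorporating the Lipschitz constant $L^2$ into the weight $\Lambda$ instead of invoking Gronwall after taking expectation is an acceptable cosmetic variant, as is your explicit splitting $\uu_1\otimes\uu_1-\uu_2\otimes\uu_2=\w\otimes\uu_1+\uu_2\otimes\w$. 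However, there is a genuine gap at the martingale step. Your stopping time $\tau_N:=\inf\{t:\int_0^t\|\nabla\uu_2(s)\|_2\,\d s\geq N\}\wedge T$ controls only the running time-integral of $\|\nabla\uu_2\|_2$, hence $\Lambda$ from below; it does \emph{not} bound $E(t)$, $\|\w(t)\|_2$, or $\|\nabla\uu_1(t)\|_2$ pointwise in time. Your parenthetical assertion that on $[0,\tau_N]$ one has ``$\|\w\|_2^2\leq E\leq$ something controlled by the stopping'' is therefore false, and the claimed boundedness of the stochastic integrand --- needed to upgrade the stopped local martingale $\int_0^{\cdot\wedge\tau_N}\Lambda\,\d M$ to a genuine martingale and obtain zero expectation --- is unjustified. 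Note that Definition~\ref{def.1} requires only $\P$-a.s.\ path regularity of the solutions, not moment bounds in $\omega$, so no free square-integrability of the quadratic variation is available to rescue the argument.

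The paper handles this by stopping on the \emph{pointwise} gradient norms of \emph{both} solutions: $\tau_M^i:=\inf\{t:\|\nabla\uu_i(t)\|_2\geq M\}$ for $i=1,2$ and $\tau_M:=\tau_M^1\wedge\tau_M^2$, which is legitimate because Definition~\ref{def.1} provides a continuous modification with paths in $\C([0,T];\bfV)$. On $[0,\tau_M]$ one then has $\|\w(t)\|_2\leq C\big(\|\nabla\uu_1(t)\|_2+\|\nabla\uu_2(t)\|_2\big)\leq 2CM$ by Poincar\'e, so $\|\tilde{\Phi}(t)\|_{\mathcal{L}_2}\|\w(t)\|_2$ is uniformly bounded, the stopped stochastic integral is a genuine square-integrable martingale, and its expectation vanishes; letting $M\to\infty$ (using $\uu_i\in\L^\infty(0,T;\bfV)$ $\P$-a.s.) completes the proof. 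Alternatively, you could retain your $\tau_N$ but introduce a second stopping time $\sigma_M:=\inf\{t:E(t)\geq M\}$, work on $[0,t\wedge\tau_N\wedge\sigma_M]$, conclude $E\equiv0$ there, and then argue that $E\equiv0$ forces $\sigma_M=T$. Either fix is routine, but as written your argument does not close.
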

\begin{proof}
	Let $\uu_1(\cdot),\uu_2(\cdot)$ be any two solutions of the system \eqref{UU1} with the initial data $\uu_0^1$ and $\uu_0^2$, respectively. For $M>0$, let us define
	\begin{align*}
		\tau_M^1=\inf_{t\in[0,T]}\big\{t: \   {\|\nabla\uu_1(t)\|_{2}}\geq M\big\},\ \text{ and }\
			\tau_M^2=\inf_{t\in[0,T]}\big\{t:  \  {\|\nabla\uu_2(t)\|_{2}}\geq M\big\}.
	\end{align*}Set $\tau_M:=\tau_M^1\wedge \tau_M^2$. Let us define $\w(\cdot):=\uu_1(\cdot)-\uu_2(\cdot)$ and $\tilde{\Phi}(\cdot):=\Phi(\uu_1(\cdot))-\Phi(\uu_2(\cdot))$. Then, $\w(\cdot)$ satisfies the following system:
	\begin{equation}\label{UU2}
	\left\{
	\begin{aligned}
		\d(\I-\kappa\Delta)^{\frac{1}{2}} \w & =(\I-\kappa\Delta)^{-\frac{1}{2}}\left[\operatorname{div} \big( \bfA(\uu_1)-\bfA(\uu_2)\big)-\operatorname{div} \big((\uu_1\otimes\uu_1)-(\uu_2\otimes\uu_2)\big)\right.\\&\qquad \left.-\operatorname{div} \big((\pi_1-\pi_2)\I \big)\right]\d t+(\I-\kappa\Delta)^{-\frac{1}{2}}\tilde{\Phi}\d \W,\\
		\w(0)& =\uu_0^1-\uu_0^2,
	\end{aligned}
	\right.
\end{equation}
for all $t\in[0,T]$ in $\mathbf{L}^{p'}$.

Let us define
\begin{align*}
	\varphi(t):=\exp\left(-C_1\int_0^t{\|\nabla\uu_2(s)\|_{2}}\d s\right),
\end{align*}
where $C_1$ is the constant appearing in \eqref{417} below. Applying infinite-dimensional It\^o's formula (see \cite[Theorem 2.1]{IGDS}) to the process $\varphi(\cdot)\|(\I-\kappa\Delta)^{\frac{1}{2}}\w(\cdot)\|^2_{2}$, we find  $\P-$a.s.,
		\begin{align}\label{UU3}\nonumber
	&	\varphi(t\wedge\tau_M)\big\{	\|\w(t\wedge\tau_M)\|^2_{2}+{\kappa\|\nabla\w(t\wedge\tau_M)\|_{2}^2}\big\}\\&\nonumber=	\|\w(0)\|^2_{2}+{\kappa\|\nabla\w(0)\|_{2}^2}-2\nu\int_0^{t\wedge\tau_M}\varphi(s)\big\langle \bfA(\uu_1(s))-\bfA(\uu_2(s)),\bfD (\w)\big\rangle\d s\\&\nonumber\quad +2\int_0^{t\wedge\tau_M}\varphi(s)\bigg(\big\langle \operatorname{div}  \big\{(\uu_1(s)\otimes\uu_1(s))-(\uu_2(s)\otimes\uu_2(s))\big\},\w(s)\big\rangle\\&\nonumber\qquad-C_1{\|\nabla\uu_2(s)\|_{2}}{\|\w(s)\|_{2}^2}\bigg)\d s +\int_0^{t\wedge\tau_M}\varphi(s)\|\tilde{\Phi}(s)\|_{\mathcal{L}_2}^2\d s\\&\quad+2\int_0^{t\wedge\tau_M}\varphi(s) \big(\tilde{\Phi}(s)\d\W(s),\w(s)\big).
		\end{align}
	Using  the following facts:
	\begin{align}\label{417}
		\big|\big\langle \operatorname{div}  \big\{(\uu_1\otimes\uu_1)-(\uu_2\otimes\uu_2)\big\},\w\big\rangle\big|\leq {\|\nabla\uu_2\|_{2}\|\w\|_{4}^2\leq C_1\|\nabla\uu_2\|_{2}\|\nabla\w\|_{2}^2}, 
	\end{align}
for  $2\leq d\leq 4$,
and
	\begin{align*}
\big\langle \bfA(\uu_1)-\bfA(\uu_2),\bfD(\w)\big\rangle \geq 0,
\end{align*}and $\operatorname{div} \w=0,$
 we arrive at $\P-$a.s.,
	\begin{align}\label{UU4}\nonumber
&	\varphi(t\wedge\tau_M)\big\{\|\w(t\wedge\tau_M)\|^2_{2}+{\kappa\|\nabla\w(t\wedge\tau_M)\|_{2}^2}\big\}\\&\nonumber\leq 	\|\w(0)\|^2_{2}+{\kappa\|\nabla\w(0)\|_{2}^2} 
 +\int_0^{t\wedge\tau_M}\varphi(s)\|\tilde{\Phi}(s)\|_{\mathcal{L}_2}^2\d s+2\int_0^{t\wedge\tau_M}\varphi(s) \big(\tilde{\Phi}(s)\d\W(s),\w(s)\big).
\end{align}
Taking expectation, using Hypothesis \ref{3.6a}, Poincar\'e's inequality and the fact that the final term is a martingale, we deduce
\begin{align*}
&	\E\bigg[\varphi(t\wedge\tau_M)\big\{\|\w(t\wedge\tau_M)\|^2_{2}+{\kappa\|\nabla\w(t\wedge\tau_M)\|_{2}^2}\big\}\bigg] \\&\leq \bigg(\frac{1}{\eta_1}+\kappa\bigg){\|\nabla\w(0)\|_{2}^2}+\frac{C(K)}{\eta_1}\E\bigg[\int_0^{t\wedge\tau_M}\varphi(s){\|\nabla\w(s)\|_{2}^2}\d s\bigg].
\end{align*}
An application of Gronwall's inequality yields
\begin{align*}
	\E\bigg[\varphi(t\wedge \tau_M){\|\nabla\w(t\wedge\tau_M)\|_{2}^2}\bigg] \leq C(\eta_1,\kappa,K){\|\nabla\w(0)\|_{2}^2},
\end{align*}where we have used that fact that $\int_0^t{\|\nabla\uu_2(s)\|_{2}}\d s<\infty, \ \P-$a.s.

Thus the initial data $\uu_0^1=\uu_0^2=\uu_0$ leads to
 $\w(t\wedge\tau_M)=0,\ \P-$a.s.  But using the fact that  $\tau_M\to T$, $\P-$a.s., implies $\w(t)=0$ and hence  $\uu_1(t)=\uu_2(t)$, $\P-$a.s., for all $t\in[0,T]$, which completes the proof.\end{proof}

 \begin{proof}[Proof of Theorem \ref{thm2.7}]
We have already established the existence of a probabilistically weak solution and pathwise uniqueness in Theorems \ref{thm:exist} and \ref{thrm:uni}, respectively. Therefore, combining Theorems \ref{thm:exist}, \ref{thrm:uni} and an application of Yamada-Watanabe theorem (see \cite[Theorem 2.1]{MRBSXZ}) leads to the proof of this theorem.
 \end{proof}

	\medskip\noindent
\textbf{Acknowledgments:} The first author would like to thank Ministry of Education, Government of India - MHRD for financial assistance. 
H. B. de Oliveira would like to thank to the Portuguese Foundation for Science and Technology under the project UIDP/04561/2020.  M. T. Mohan would  like to thank the Department of Science and Technology (DST) Science $\&$ Engineering Research Board (SERB), India for a MATRICS grant (MTR/2021/000066).

\medskip\noindent	{\bf  Declarations:}

\noindent 	{\bf  Ethical Approval:}   Not applicable

\noindent  {\bf   Competing interests: } The authors declare no competing interests.

\noindent 	{\bf   Authors' contributions: } All authors have contributed equally.

\noindent 	{\bf   Funding: } DST-SERB, India, MTR/2021/000066 (M. T. Mohan).\\
\url{https://doi.org/10.54499/UIDP/04561/2020} (H. B. de Oliveira).

\noindent 	{\bf   Availability of data and materials: } Not applicable.

\end{document}